\def \1{\textbf{1}}
\def \sy{{\sf y}}
\def \se{{\sf e}}
\def \sB{{\sf B}}
\def \bw{{\bf w}}
\def \bT{{\bf T}}
\def \bar{\overline}
\def \ben{\begin{eqnarray}}
\def \een{\end{eqnarray}}
\def \ba{\begin{align}}
\def \ea{\end{align}}
\def \be{\begin{eqnarray*}}
\def \ee{\end{eqnarray*}}
\def \beq{\begin{equation}}
\def \eq{\end{equation}}
\def \bs{{\bf s}}
\def \build#1#2#3{\mathrel{\mathop{\kern 0pt#1}\limits_{#2}^{#3}}}
\def \ba{{\bf a}}
\def \bx{{\bf x}}
\def \cS{{\cal S}}
\def \cT{{\cal T}}
\def \dd{\xrightarrow[n]{(d)}}
\def \eref#1{(\ref{#1})}
\def \floor#1{\lfloor#1\rfloor}
\def \bF{{\bf F}}
\def \l{\left}
\def \proba{\xrightarrow[n]{(proba.)}}
\def \r{\right}
\def \sous#1#2{\mathrel{\mathop{\kern 0pt#1}\limits_{#2}}}
\def \sur#1#2{\mathrel{\mathop{\kern 0pt#1}\limits^{#2}}}
\def\cro#1{\llbracket#1\rrbracket}
\def \Neigh{{\sf Neigh}}
\newcommand{\dist}{\operatorname{d}}
\newcommand{\sk}{\operatorname{Sk}}
\newcommand{\bbR}{\mathbb R}
\newcommand{\bbU}{\mathbb U}
\newcommand{\da}{\downarrow}
\newcommand{\bG}{\mathbf G}
\newcommand{\cB}{\mathcal B}
\newcommand{\btau}{\boldsymbol \tau}
\newcommand{\bgam}{\boldsymbol \gamma}
\newcommand{\black}{\color{black}}
\newcommand{\cP}{\mathcal P}
\newcommand{\E}[1]{\ensuremath{\mathbb{E} \left[#1 \right]}}
\newcommand{\Ec}[1]{\ensuremath{\mathbb{E} [#1]}}
\newcommand{\p}[1]{{\mathbb P}\left(#1\right)}
\newcommand{\pc}[1]{{\mathbb P}(#1)}
\newcommand{\bbD}{\mathbb D}
\newcommand{\bbC}{\mathbb C}
\newcommand{\sv}{{\sf v}}
\newcommand{\sw}{{\sf w}}
\newcommand{\cF}{\mathcal F}
\newcommand{\cN}{\mathcal N}
\newcommand{\cK}{\mathcal K}
\newcommand{\cZ}{\mathcal Z} 
\newcommand{\I}[1]{\ensuremath{\mathbf{1}_{ \{ #1 \} }}}
\def \CC{{\sf CC}}
\def \FCP{{\sf CP}}
\begin{document}

\newtheorem{fig}{\hspace{2cm} Figure}
\newtheorem{lem}{Lemma}
\newtheorem{defi}[lem]{Definition}
\newtheorem{pro}[lem]{Proposition}
\newtheorem{theo}[lem]{Theorem}
\newtheorem{cor}[lem]{Corollary}
\newtheorem{algo}{Algorithm}
\theoremstyle{definition}
\newtheorem{note}[lem]{Note}
\newtheorem{conj}[lem]{Conjecture}
\newtheorem{Ques}[lem]{Question}
\newtheorem{rem}[lem]{Remark}

\baselineskip=15pt

\title{\bf A new encoding of coalescent processes. \\ Applications
to the additive and multiplicative cases.}
\date{\today}
\author{ Nicolas Broutin \\ Inria Paris--Rocquencourt
        \and Jean-Fran\c{c}ois Marckert\\CNRS, LaBRI, Université Bordeaux}

\maketitle

\begin{abstract}
We revisit the discrete additive and multiplicative coalescents, starting with $n$ particles 
with unit mass. These cases are known to be related to some ``combinatorial coalescent processes'': 
a time reversal of a fragmentation of Cayley trees or a parking scheme in the additive case, and 
the random graph process $(G(n,p))_p$ in the multiplicative case. Time being fixed, encoding these 
combinatorial objects in real-valued processes indexed by the line is the key to describing
the asymptotic behaviour of the masses as $n\to +\infty$.

We propose to use the Prim order on the vertices instead of the classical breadth-first (or depth-first) 
traversal to encode the combinatorial coalescent processes. In the additive case, this yields 
interesting connections between the different representations of the process. 
In the multiplicative case, it allows one to answer to a stronger version of an open question of Aldous
[\emph{Ann.\ Probab.}, vol.\ 25, pp.\ 812--854, 1997]: 
we prove that not only the sequence of (rescaled) masses, seen as a process indexed by the time $\lambda$, 
converges in distribution to the reordered sequence of lengths of the excursions
above the current minimum of a Brownian motion with parabolic drift $(B_t+\lambda t - t^2/2, t\geq 0)$, 
but we also construct a version of the standard augmented multiplicative coalescent of Bhamidi, Budhiraja 
and Wang [\emph{Probab.\ Theory Rel.}, to appear] using an additional Poisson point process.

\bigskip
\noindent
{\bf Mathematics Subject Classification (2000):} 60C05, 60K35, 60J25, 60F05, 68R05\\
{\bf Keywords:~}{multiplicative coalescent, additive coalescent, random graph, Cayley tree, 
invasion percolation, Prim's algorithm.}
\end{abstract}

\section{Introduction}

Consider a family of weighted particles (carrying a mass, or a size) which (informally) merge according 
to the following rule: given some non-negative symmetric collision kernel $K$, each pair of particles with 
masses $x$ and $y$ collides at rate $K(x,y)$, upon which they coalesce to form a single new particle of mass 
$x+y$ (later on, this is sometimes referred to as a \emph{cluster}).
A mean-field model is provided by Smoluchowski's equations \cite{Smoluchowski1916}, which consist 
in an infinite system of ordinary differential equations characterising the joint evolution of 
the densities of particles of each mass as time goes. The systems are only solved in some special cases, 
among which one may cite the cases when the kernel is either additive, $K(x,y)=x+y$, or multiplicative, 
$K(x,y)=xy$ (\cite{Aldous97survey,Bertoin2006}, see \cite{FoLa} for more recent and general results).

Arguably, one of the objectives in the field of coalescent processes is to tend towards models of 
physical systems that would be more realistic ``at the particles level'', even if many of the features 
of real systems are still ignored, starting with the positions in space and energies of the particles. 
For an overview of the literature on these issues, and of the relation between coalescence processes 
and Smoluchowski's equations, we refer the interested reader to Aldous' survey \cite{Aldous97survey}, 
Pitman \cite{Pitman2006}, or Bertoin \cite{Bertoin2006}. 

When the number of particles is finite, it is rather easy to define rigorously a Markov process having 
the dynamics discussed in the first paragraph above. One possible construction is the so-called 
Marcus--Lushnikov \cite{Marcus1968,Lushnikov1978a} coalescence process.
Informally, consider the masses as vertices of a complete graph, and equip the edges between vertices $i$ 
and $j$ with random exponential clocks with parameter $K(x_i,x_j)$. When the clock between $i$ and $j$ rings, 
replace the masses $x_i$ and $x_j$ on the nodes $i$ and $j$ by $x_i+x_j$ and $0$, respectively, and 
update the parameters of the clocks involving $i$ and $j$ so that the rates remain given by the kernel.
When the number of masses is infinite, the definition of coalescence processes is much more involved.
These issues are discussed and solved for important classes of kernels in Evans \& Pitman \cite{EvPi1998} 
and in Fournier \& L\"ocherbach \cite{F-L} (see also references therein, and \cite{Aldous97survey}). 

In this paper we focus on the additive and multiplicative kernels. 
Together with Kingman's coalescent (for which $K(x,y)=1$), the associated coalescence processes 
are somehow the simplest, but also some of the most important.  
This is mostly because of their manifestations in fundamental discrete models
that we will call hereafter \it combinatorial coalescence processes\rm. 
These ``incarnations'' are forest-like or graph-like structures modelling the coalescence at a 
finer level which (partially) keep track of the history of the coalescing events 
\cite{EvPi1998,AlPi1998a,aldous1997,Bertoin2006,BER,CL,Pitman1999b, Pitman2006,BhBuWa2013a}. 

\medskip
\noindent\textsc{Invasion percolation and linear representations.}\ 
Most importantly for us, both the additive and the multiplicative coalescents 
started with unit-mass particles admit a graphical representation as (a time change of) the 
process of level sets of some weighted graph: there exists some (random) graph and random weights 
such that at each time $t$, the clusters are the connected 
components of the graph consisting of all edges of weight at most $t$. 
We call such processes \emph{percolation systems}. 
For the multiplicative coalescent, the graph is simply the complete graph weighted by 
independent and identically distributed (i.i.d.) random variables uniform on $[0,1]$; 
the additive case arises when taking the graph as a uniformly random labelled tree, and 
the weights to be i.i.d.\ uniform that are also independent of the tree. 
The idea underlying our work relies on \emph{invasion percolation} \cite{ChLeKoWi1982a,LeBo1980a} or equivalently 
on \emph{Prim's algorithm} \cite{Jarnik1930,Prim1957} to obtain 
an order on the vertices of such percolation systems, which we refer to as the \emph{Prim order} 
and that is \emph{consistent} with the coalescent in a sense that we make clear immediately. 
Given a connected graph whose edges are marked with non-negative and distinct weights,
and a starting node, say $v_1$, Prim's algorithm grows a connected component from $v_1$, 
each time adding the endpoint of the lightest edge leaving the current component (see Section~\ref{seq:POLR}). 
Prim order ``linearises'' the coalescent in a consistent way: at all times, 
the clusters are intervals of the Prim order, so that, in particular the clusters that coalesce 
are always ``adjacent'' (Proposition \ref{pro:Prim-perc}). Furthermore, it is remarkable that this definition of an alternate (random) order 
makes the consistence in time transparent and exact at the combinatorial level. We believe that that this new 
point of view should lead to further advances in the study of coalescence processes. 

Aside from this new unifying idea which is interesting on its own, our main contributions about the 
multiplicative and additive coalescent are the following:
 
\medskip
\noindent\textsc{Multiplicative coalescent.}\ 
We prove that the representation of the asymptotic cluster masses in terms of the 
excursion lengths of a functional of a Brownian motion with parabolic drift that 
\citet{aldous1997} proved valid for some fixed time (convergence of a marginal) can be extended 
to a convergence as a time-indexed process (convergence as a random function). 
This answers in particular Question 6.5.3 p.\ 851 of Aldous \cite{aldous1997}. 
The combinatorial coalescence process of interest is the 
percolation process on the complete graph, which is nothing else than the 
classical (Erd\H{o}s--R\'enyi) random graph process $(G(n,p), p\in[0,1])$ (see Section \ref{sub:rgp}) 
seen around $p=1/n + O(1/n^{-4/3})$. 
Furthermore, we also construct a version of the standard augmented multiplicative coalescent of 
\citet*{BhBuWa2013b} using only a Brownian motion and a Poisson point process.
This process has been constructed as the scaling limit of the sequence of cluster sizes and excesses of 
critical random graphs, see Section~\ref{sec:results} for more details.
\smallskip

\noindent\textsc{Additive coalescent.}\ 
Here the central combinatorial model is the percolation process on a uniformly random 
labelled tree, hereafter referred to as $\FCP_+^{\textsc{ap}}$
which has initially been built using random forests. 
The construction due to \citet{Pitman1999b} (see also references there for a complete and long history of the problem) 
leads to a continuous representation of the standard additive coalescent in terms of the time reversal 
of a fragmentation process (the logging process) of the Brownian continuum random tree (see \citet{AlPi1998a}). 
We introduce a slight modification of the parking model, that we refer to as $\FCP_+^{\textsc{cl}}$, constructed by
\citet{CL} as an approximation of the additive coalescence process. 
Our model $\FCP_+$ is equivalent to $\FCP_+^{\textsc{cl}}$ up to a random time change.  
Again  $\FCP_+$ is a one-dimensional model in which only consecutive blocks merge as time evolves. 
Our contribution here is to unify these results by showing that the model $\FCP_+$
can be used to encode $\FCP_+^{\textsc{ap}}$. 
Similarly to what is done in \cite{CL}, the blocks (resp.\ limiting blocks) have a representation in 
terms of the excursion lengths of some associated random walks (resp.\ functional of the normalised Brownian 
excursion) indexed by a two-dimensional domain (space and time). 
In this case, the limiting process is the standard additive coalescent and its construction 
using a Brownian excursion was already known (\cite{BER,CL}). 

\section{Main results about additive and multiplicative coalescents}\label{sec:results}

We present here the consequences of our work in terms of coalescence processes.
Write $\ell^p_\da$ for the set of non increasing  sequences of non-negative real numbers 
belonging to $\ell^p$ equipped with the standard $\ell^p$ norm, 
$\|x\|_p=\l(\sum_i |x_i|^p\r)^{1/p}$. 
As explained in \citet{EvPi1998}, $\ell^p_\da$ is a convenient space to describe coalescence processes.
Consider an element $\bx=(x_i, i\geq 1)$ of this space as a configuration, $x_i$ 
being the mass of particle $i$.  When two particles with masses $x_i$ and $x_j$ merge, 
their masses are removed from $\bx$, and replaced by one mass $x_i+x_j$ and another one with 
mass zero, inserted at the positions that ensure that the resulting configuration remains 
a non-increasing sequence of masses. 

The Marcus--Lushnikov (\cite{Marcus1968,Lushnikov1978a}, see also \cite{aldous1997}) definition of 
the finite-mass additive (resp.\ multiplicative) coalescent can be extended to sequences of masses 
in $\ell^1$ (resp.\ $\ell^2$) (see \cite{AlPi1998a} and \cite{aldous1997}). More precisely,
Aldous \cite[Proposition~5]{aldous1997} (resp.\ Evans \& Pitman \cite[Theorem~2]{EvPi1998}) 
proved that there exists a Feller Markov process taking values in $\ell^2_\da$ (resp.\ $\ell^1_\da$) 
which has the dynamics of the multiplicative (resp.\ additive) coalescent. 

Let $X^n$ be the additive coalescent process started at time $0$ in the state $(1/n,\dots,1/n,0,0,\dots)\in \ell^1_\da$, 
a configuration with $n$ particles each having mass $1/n$. Evans \& Pitman \cite{EvPi1998} (see also Aldous \& Pitman 
\cite[Proposition 2]{AlPi1998a}), proved that
\[\Big(X^n\Big(t+\frac 1 2 \log n\Big)\Big)_{-\infty < t <+\infty} \dd \l(X^{\infty}_+(t)\r)_{-\infty < t <+\infty}\]
for the Skorokhod topology on 
$\bbD((-\infty,+\infty),\ell^1_\da)$, the space of cadlag functions from $(-\infty,+\infty)$ taking values 
in $\ell^1_\da$, where the limiting process is also an additive coalescent, called 
the \emph{standard} additive coalescent (see also Section~\ref{sec:Pitman-Aldous}).

In the multiplicative case, Aldous \cite[Proposition 4]{aldous1997} states that  starting with a configuration with $n$ particles of mass $n^{-2/3}$, when the parameters of the exponential clocks between clusters are the product of their masses, then the sorted sequence of cluster sizes present at time $n^{1/3}+t$ (for a fixed $t$) converges in distribution in $\ell^2_\da$ to some sequence  $\bgam^\times(t)$ (described below). In Corollary 24, he shows that there exists a Markov process, called the \emph{standard} multiplicative coalescent, whose distribution at time $t$ coincides with $\bgam^\times(t)$,  and whose evolution is that of the multiplicative (Marcus--Lushnikov) coalescent. Nevertheless, with the construction he proposes, he is not able to prove that, as a process, $\bgam^\times$ is the \emph{standard} multiplicative coalescent.
 
The marginals of these standard coalescents both possess a representation using Brownian-like processes. 
Let $\se$ be a normalised Brownian excursion (with unit length) and let $\sB$ be a standard Brownian motion. Define 
\begin{align*}
\sy^{(\lambda)}_{\times}(x) &= \sB(x)+\lambda x - x^2/2,& x\geq 0, \lambda\in`R\\
\sy^{(\lambda)}_{+}(x)      &= \se(x)-\lambda x ,& x\in[0,1], \lambda>0
\end{align*}
and consider the operator $\Psi$ on the set of continuous functions $f:\Lambda\to `R$  defined by
\ben\label{eq:Psi}
\Psi f(x)= f(x)-\min\{f(y)~: y\leq x\}, ~~~~~x \in \Lambda
\een
where  $\Lambda=[0,+\infty)$ or $\Lambda=[0,1]$ is the domain of $f$. 
An interval $I=[a,b]$ is said to be an excursion of $f$ (resp.\ of $f$ above its minimum) if 
$f(a)=f(b)=0$ and $b=\inf\{t>a, f(t)=0\}$ (resp.\ if $f(a)=f(b)=\min\{f(t)~:t\leq b\}$).  
An important property of $\Psi$ is the following immediate lemma, which is illustrated in Fig.~\ref{fig:illus}.
\begin{lem}\label{lem:Psi-exc}
If $f(0)=0$ and $g=\Psi f$, then $I=[a,b]$ is an excursion of $f$ above its minimum if and only if $I$ 
is an excursion of $g$ above 0.
As a consequence, when these are well-defined, the multiset of the $k$ largest excursion sizes of 
$f$ above its minimum and of $g$ above 0 coincide.
\end{lem}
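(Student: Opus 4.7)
The plan is to work with the running minimum $m(x) := \min\{f(y) : y \leq x\}$ and translate everything in terms of it. Since $f$ is continuous, $m$ is continuous and non-increasing, and $g = f - m$ satisfies $g \geq 0$ pointwise with $g(x) = 0$ exactly when $f(x) = m(x)$, i.e., when $x$ is a running-minimum point of $f$. In particular the zero set of $g$ coincides with the running-minimum set of $f$, which already identifies the candidate excursion endpoints on both sides.

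The substantive step is to show that whenever $g > 0$ on $(a, b)$ and $g(a) = g(b) = 0$, the running minimum $m$ is constant on $[a, b]$. Assume for contradiction that $m(s_1) > m(s_2)$ for some $a < s_1 < s_2 < b$. Since $m(s_2) < m(s_1) \leq f(s_1)$, the infimum defining $m(s_2)$ is attained on $(s_1, s_2]$ at some $t^*$ with $f(t^*) = m(s_2)$; one then checks $m(t^*) = m(s_2) = f(t^*)$, so $g(t^*) = 0$. But $t^* \in (a, b)$, contradicting $g > 0$ there. Hence $m$ is constant on $(a, b)$, and by continuity on $[a, b]$. Combined with $g(a) = g(b) = 0$, this yields $f(a) = f(b) = m(b) = \min\{f(t) : t \leq b\}$, exhibiting $[a, b]$ as an excursion of $f$ above its minimum.

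The reverse implication is direct: if $[a, b]$ is an excursion of $f$ above its minimum, interpreted as $f(a) = f(b) = \min\{f(t) : t \leq b\}$ together with $f(t) > \min\{f(s) : s \leq t\}$ on $(a, b)$, then $m$ non-increasing together with $m(a) \leq f(a) = m(b)$ forces $m(a) = m(b)$, giving $g(a) = g(b) = 0$, while $f > m$ on $(a, b)$ gives $g > 0$ there. So $[a, b]$ is an excursion of $g$ above $0$.

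The claim about the multiset of $k$ largest excursion sizes is then immediate from the excursion bijection just established, which preserves endpoints and therefore the lengths $b - a$. The only genuinely delicate point in the argument is the compactness/continuity step showing $m$ is locally constant wherever $g$ is strictly positive; the rest is bookkeeping from the definitions.
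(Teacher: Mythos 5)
Your proof is correct. The paper states this lemma as ``immediate'' and gives no proof at all, so there is nothing to diverge from: your argument simply supplies the verification the authors leave to the reader, and the one substantive point you isolate --- that the running minimum $m$ is constant on any interval where $g=f-m$ stays strictly positive, proved by locating a point $t^*$ where $f(t^*)=m(t^*)$ --- is exactly the heart of the matter. Your reading of ``excursion of $f$ above its minimum'' as including the interior condition $f>m$ on $(a,b)$ is the intended one (the paper's ``resp.''\ phrasing carries the condition $b=\inf\{t>a:\dots\}$ over to that case), so the equivalence and the consequent identification of the $k$ largest excursion lengths go through as you state.
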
 
\begin{figure}[h]
\centerline{\includegraphics[width=9 cm]{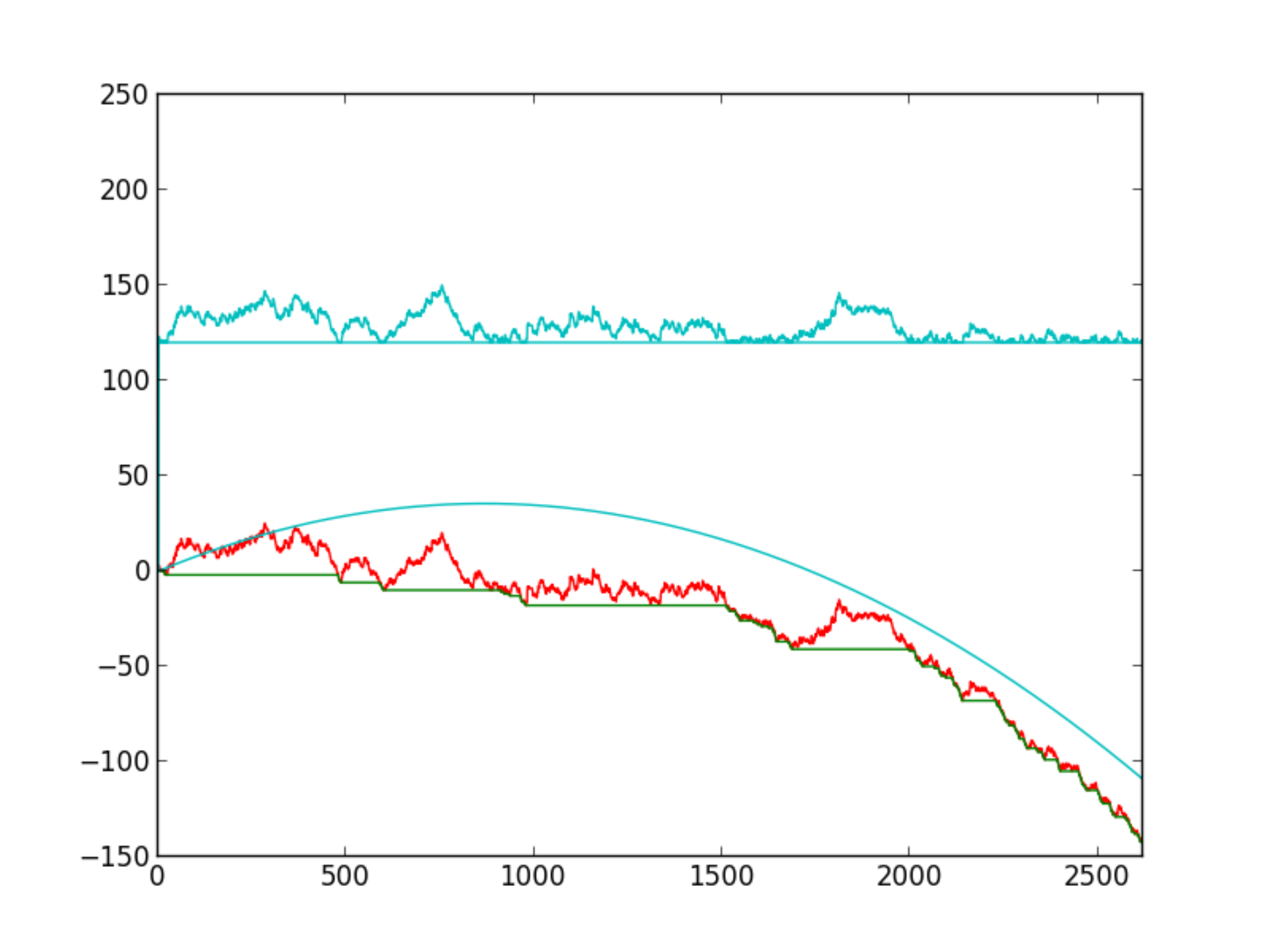}}
\caption{\label{fig:illus} 
A simulation of the processes $\sy_\times^{(\lambda),n}$ (bottom) and $\Psi \sy_\times^{(\lambda),n}$ (top), 
with the green line materializing the infimum process. Observe the correspondance between the excursions 
above 0 in the top picture with that above the minimum in the bottom one.}
\end{figure}

Let $\bgam^+(\lambda):=(\gamma^+_i(\lambda))_{i\ge 1}$ and 
$\bgam^\times(\lambda):=(\gamma^+_i(\lambda))_{i\ge 1}$ be the sequence of lengths of the 
excursions of $\Psi \sy_+^{(\lambda)}$ and of $\Psi \sy_\times^{(\lambda)}$, respectively,
sorted in decreasing order. Clearly, for any $\lambda\geq 0$,
$\gamma^+(\lambda)\in \ell_\da^1$ and, by Aldous \cite[Lemma 25]{aldous1997},  for any $\lambda\in `R$,
$\bgam^\times(\lambda)\in\ell_\da^2$.
Then, it is known that for any integer $k$ and real numbers $\lambda_1<\lambda_2<\dots< \lambda_k$ 
the vectors
\begin{align*}
(\bgam^+(-\ln(\lambda_1)), \dots, \bgam^+(-\ln(\lambda_k)))
\quad \text{and} \quad
(\bgam^\times(\lambda_1), \dots, \bgam^\times(\lambda_k))
\end{align*}
are distributed as the marginals at times $(\lambda_1,\dots, \lambda_k)$ of the standard additive and 
multiplicative coalescent, respectively (for the additive case, see Bertoin \cite{BER} and 
Chassaing--Louchard \cite{CL}; for the multiplicative case, see Aldous \cite{aldous1997} for 
the marginal convergence, and \citet{BhBuWa2013b} for the finite-dimensional distributions).

Bertoin \cite{BER} also proved that the process $(\bgam^+(-\ln(\lambda)))_{\lambda \geq 0}$ is a 
version of the standard additive coalescent.
A similar statement has been announced by Armendariz \cite{Armendariz2001} for 
$(\bgam^\times(\lambda))_{\lambda \in \bbR}$, but has never been published. 
Both \cite{BER} and \cite{Armendariz2001} argue directly in the 
continuum (\citet[Theorem 4.2]{CL} proceeded from a parking 
scheme, see Section \ref{sec:CL}, and proved only convergence of marginals). 
The main purpose of this paper is to give a simple and unified proof of these results based
on discrete versions of the coalescents.  
The objects involved are, as we said earlier, a parking scheme in the additive case and 
the random graph process $(G(n,p))_p$ in the multiplicative one. More precisely, our approach relies on 
encodings of these objects using discrete analogues of $\sy_+^{(\lambda)}$ and 
$\sy_\times^{(\lambda)}$, denoted by $\sy_+^{(\lambda),n}$ and $\sy_\times^{(\lambda),n}$. 
The associated processes 
\[\bgam^+(n,\lambda):=(\gamma^+_i(n,\lambda))_{i\ge 1}
\qquad \text{and} \qquad
\bgam^\times(n,\lambda):=(\gamma^+_i(n,\lambda))_{i\ge 1}\] will be seen (and this is standard) 
to coincide with lengths of their excursions above their respective minima (up to some details, see Note \ref{note:disc}). 

{Using the Prim order alluded above, the strength of these encodings will appear to be that 
the lengths of the excursions of $\Psi \sy_+^{(\lambda),n}$ (resp.\ $\Psi \sy_+^{(\lambda),n}$) 
correspond, up to a time change and a normalisation, to the  cluster sizes in an additive 
(resp.\ multiplicative) coalescent process, as a time-indexed process ($\lambda$ plays the role of time).
In particular, as $\lambda$ grows, only successive excursions of $\Psi \sy^{(\lambda),n}$ merge, 
which translates the fact that the Prim order \emph{linearises} the additive 
and multiplicative processes, in the sense that it makes them consistent 
with a linear order. 

Again, the construction in the additive case is close to that of 
Chassaing--Louchard \cite{CL} where the same property holds. As developed in 
Section \ref{sec:qsdq}, the novelty 
here is that our combinatorial additive coalescent corresponds to the 
linearisation of the time reversal of a 
fragmentation of a uniform Cayley tree defined by Pitman (see Section~\ref{sec:Pitman-Aldous}).
We show that in a suitable space
\[\sy^{(\lambda),n}_+(x)\dd\sy^{(\lambda)}_+(x)\]
as a process indexed by $(\lambda,x)$ (see Theorem \ref{theo:main+}).

The linearisation in the multiplicative case is new and allows us to prove the 
convergence of $\sy^{(\lambda),n}_\times(x)$ to  $\sy^{(\lambda)}_\times(x)$ as a process 
indexed by $(\lambda,x)$ (see Theorem \ref{theo:mainx}).  
}

Using the properties of $\Psi$ and of the operator ``extraction of excursion sizes'', 
we prove:
\begin{theo}\label{thm:conv-coal} 
We have 
\ben\label{eq:qsdq}
(\bgam^{+,n}(\lambda): \lambda \geq 0) \dd (\bgam^{+}(\lambda): \lambda\geq 0)
\een
and 
\[(\bgam^{\times,n}(\lambda): \lambda\in \bbR) \dd (\bgam^{\times}(\lambda): \lambda\in \bbR)\]
in the sense of Skorokhod convergence on $\bbD(\bbR, \ell^1_\da)$ and $\bbD(\bbR, \ell_\da^2)$, respectively.
\end{theo}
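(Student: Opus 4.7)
The strategy is to deduce Theorem~\ref{thm:conv-coal} from the functional limit theorems for the encodings, namely $\sy_+^{(\lambda),n}\dd \sy_+^{(\lambda)}$ (Theorem~\ref{theo:main+}) and $\sy_\times^{(\lambda),n}\dd \sy_\times^{(\lambda)}$ (Theorem~\ref{theo:mainx}), by a continuous mapping argument, using Lemma~\ref{lem:Psi-exc} to swap ``lengths of excursions of $\Psi f$ above $0$'' with ``lengths of excursions of $f$ above its current minimum''.

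First, I would reinterpret the joint-in-$(\lambda,x)$ convergence provided by Theorems~\ref{theo:main+} and~\ref{theo:mainx} as Skorokhod convergence in $\lambda\in\bbR$ of trajectories taking values in a space of continuous functions of $x$ (for instance $C(\bbR_+)$ with the topology of uniform convergence on compact sets in the multiplicative case, or $C([0,1])$ in the additive case). This viewpoint matches the discrete dynamics: for each fixed $n$, the encoding $\sy^{(\lambda),n}$ is cadlag in $\lambda$, with jumps occurring exactly at the coalescence instants, and is piecewise a continuous function of $x$.

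Second, I would define the functional $\Phi$ mapping a continuous function $f$ with $f(0)=0$ to the non-increasing rearrangement of the lengths of its excursions above the current minimum, valued in $\ell^1_\da$ (resp.\ $\ell^2_\da$) in the additive (resp.\ multiplicative) case. By Lemma~\ref{lem:Psi-exc} together with the definitions given just before Theorem~\ref{thm:conv-coal}, one has $\bgam^{+,n}(\lambda)=\Phi(\sy_+^{(\lambda),n})$, $\bgam^{+}(\lambda)=\Phi(\sy_+^{(\lambda)})$, and the analogous identities in the multiplicative case. The central point is that $\Phi$ is almost surely continuous at the sample paths of the limit, in the appropriate $\ell^p_\da$ topology: for each fixed $\lambda$, local minima of $\sy^{(\lambda)}$ are almost surely distinct, so excursion endpoints are stable under small uniform perturbations of the path, and the linear (resp.\ parabolic) drift in the additive (resp.\ multiplicative) case provides the $\ell^p$-summability of excursion lengths. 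The continuous mapping theorem then yields pointwise-in-$\lambda$ convergence of $\bgam^{\pm,n}(\lambda)$ to $\bgam^{\pm}(\lambda)$.

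The main obstacle is to upgrade this pointwise-in-$\lambda$ continuity of $\Phi$ to continuity as a map $\bbD(\bbR,C(\bbR_+))\to\bbD(\bbR,\ell^p_\da)$, which is what is needed for Skorokhod convergence at the level of $\ell^p_\da$-valued trajectories. This requires a uniform control, over compact $\lambda$-windows, of the $\ell^p$-mass carried by small excursions. In the multiplicative case this is delicate, since the $\ell^2_\da$ metric is sensitive to the accumulated contribution of many small excursions; I would obtain the requisite uniform tail control either by exploiting the parabolic drift of $\sy_\times^{(\lambda)}$ directly (showing that the expected $\ell^2$-norm of excursion lengths shorter than $\varepsilon$ is small uniformly in $\lambda$ on compact intervals), or by transferring the discrete-side estimates on cluster sizes available from~\cite{aldous1997,BhBuWa2013b} through the joint convergence. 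Once this uniform bound is in place, tightness of $(\bgam^{\pm,n}(\lambda))_\lambda$ in $\bbD(\bbR,\ell^p_\da)$ follows from the piecewise-constant structure of the discrete trajectories (each jump in $\ell^p_\da$ is bounded by the size of the larger merging cluster), and the continuous mapping theorem completes the proof.
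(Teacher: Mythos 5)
Your overall plan (push the functional convergences of Theorems~\ref{theo:main+} and~\ref{theo:mainx} through the ``excursion lengths above the running minimum'' functional, using Lemma~\ref{lem:Psi-exc}) is indeed the paper's strategy, but two of your key steps do not hold as stated, and the mechanism the paper actually uses to close them is missing from your sketch. First, even at a \emph{fixed} $\lambda$, the map $\Phi$ into $\ell^2_\da$ is not continuous at the limit path merely because local minima are a.s.\ distinct: uniform convergence on compacts controls the $k$ largest excursions for each $k$, but says nothing about the $\ell^2$ mass carried by the infinitely many small excursions (and by excursions far to the right of any compact). The drift only guarantees that the \emph{limit} lies in $\ell^2_\da$; it does not prevent mass escaping along the prelimit. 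This is precisely why the paper does not argue by plain continuous mapping but instead adapts the size-biased arguments of Lemma~7 of \cite{aldous1997} for the finite-dimensional distributions.

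Second, and more importantly, the ``main obstacle'' you identify --- uniform-in-$\lambda$ control of the $\ell^2$ mass of small clusters --- is exactly where the paper's two new ingredients enter, and your proposed substitutes do not supply them. The paper proves a compactness criterion in $\bbD(I,\ell^p_\da)$ (Lemma~\ref{lem:compact_coal}) that exploits the monotonicity in $\lambda$ of the partial sums $S^p(\bgam(\lambda))_k$ (coalescence never decreases them), reducing process-level tightness to a single uniform tail bound; in the additive case the tail is monotone so the bound at $\lambda_\star$ suffices, but in the multiplicative case the tail $\sum_{i>m}\gamma_i^{\times,n}(\lambda)^2$ is \emph{not} monotone, and the paper dominates it, via a coupling, by the $\ell^2$ norm of an auxiliary multiplicative coalescent $\bT^{[m],n}$ started at $\lambda_\star$ from the tail $\btau^{[m],n}(\lambda_\star)$ alone, and then invokes the Feller property of the multiplicative coalescent to make $\|\bT^{[m],n}(\lambda^\star)\|_2$ small once $m,n$ are large. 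Your first alternative (an estimate on the limit process $\Psi\sy^{(\lambda)}_\times$) cannot yield tightness of the prelimit sequence, and your second (``transferring discrete-side estimates through the joint convergence'') names no estimate that is uniform over the critical window; in effect it defers to \cite{BhBuWa2013b}, whose arguments are themselves of the Feller-property type. Finally, the claim that tightness in $\bbD(\bbR,\ell^p_\da)$ follows because each jump is bounded by the size of the larger merging cluster is not a valid argument: tightness requires controlling the accumulated oscillation from many small coalescences over short $\lambda$-intervals, which is again the uniform tail control, not a bound on individual jump sizes. So the proposal correctly locates the difficulty but does not actually resolve it; the coupling-domination plus Feller-property step (and the monotonicity-based compactness lemma) would need to be supplied to make it a proof.
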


A a corollary, using a correspondence with coalescence (which in the additive case amounts to clarifying the 
time change) we establish that
\begin{cor}\label{thm:main_continuous} 
The processes $(\bgam^+(e^{-t}))_{t\in \bbR}\sur{=}{(d)}(X^{\infty}_+(t))_{t\in \bbR}$ and 
$(\bgam^\times(\lambda))_{\lambda\in \bbR}$
are versions of the additive and multiplicative coalescent, respectively.
\end{cor}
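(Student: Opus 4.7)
The plan is to deduce the corollary directly from Theorem~\ref{thm:conv-coal} by combining it with the identification of the discrete processes $\bgam^{+,n}$ and $\bgam^{\times,n}$ as genuine Marcus--Lushnikov coalescents, a fact whose combinatorial content is the Prim-linearization result alluded to in Section~\ref{seq:POLR}. Once this identification is in hand, the corollary becomes a matter of transferring known information about the discrete coalescents to the limit.

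First I would treat the additive case. The Prim-linearization construction of $\FCP_+$, together with its relation to $\FCP_+^{\textsc{cl}}$ and $\FCP_+^{\textsc{ap}}$, identifies $\bgam^{+,n}(\lambda)/n$ with the sorted cluster sizes of the $n$-particle additive Marcus--Lushnikov process observed at a coalescent-time that is an explicit function of $\lambda$ and $n$. Tracking this time-change — which, after the centering $t \mapsto t + \tfrac12 \log n$ of Evans--Pitman and Aldous--Pitman, yields the change of variable $\lambda = e^{-t}$ in the limit — and combining the convergence $X^n(\cdot + \tfrac12 \log n) \dd X^{\infty}_+$ with Theorem~\ref{thm:conv-coal}, two sequences of $\bbD$-valued random variables are seen to have the same weak limit, so that $(\bgam^+(e^{-t}))_{t\in \bbR} \stackrel{d}{=} (X^{\infty}_+(t))_{t\in \bbR}$.

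For the multiplicative case, Aldous already established the existence of a Feller process in $\ell^2_\da$ with the multiplicative coalescent dynamics and marginals equal to $\bgam^\times(\lambda)$, but his argument did not give a process-level identification. I would apply the Prim-linearization to the percolation process on the complete graph to exhibit, for each $n$, $\bgam^{\times,n}(\lambda)$ as (a rescaling of) the multiplicative Marcus--Lushnikov coalescent started at $n$ particles of mass $n^{-2/3}$ and observed in the critical window around time $n^{1/3}$. Theorem~\ref{thm:conv-coal} then gives weak convergence in $\bbD(\bbR, \ell^2_\da)$ to $\bgam^\times$. Since the pre-limit processes are strong Markov and $\lambda \mapsto \bgam^\times(\lambda)$ is stochastically continuous — a consequence of the a.s.\ continuity of $\lambda \mapsto \sy^{(\lambda)}_\times$ together with Lemma~\ref{lem:Psi-exc} — the Markov property and the pairwise merging rates pass to the limit, identifying $(\bgam^\times(\lambda))_{\lambda\in\bbR}$ with the standard multiplicative coalescent.

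The main obstacle lies in this last step: passing the Markov property to an $\ell^2_\da$-valued limit requires some care, notably to rule out any loss of mass at infinity when many small clusters coalesce. The clean way around this is to invoke Aldous's uniqueness of (extremal) Feller extensions with prescribed marginal distributions (\cite[Sect.~2]{aldous1997}), which reduces the task to verifying Feller continuity of the candidate limit; this in turn follows from the joint continuity of $(\lambda,x)\mapsto \sy^{(\lambda)}_\times(x)$ and the continuity of the ordered excursion-length functional at functions with distinct excursion lengths. The additive case is comparatively routine, the only subtlety being the bookkeeping for the time-change and the logarithmic centering in $n$.
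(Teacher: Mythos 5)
Your additive argument is workable but takes a different route from the paper's: the paper never invokes the Evans--Pitman process-level convergence of $X^n(\cdot+\tfrac12\log n)$; it argues at the level of finite-dimensional distributions and uses the Feller property of the limiting additive coalescent semigroup of \cite[Theorem~2]{EvPi1998}, identifying the \emph{standard} coalescent from a single marginal. If you follow your route you must also handle the fact that the correspondence between the parameter $\lambda$ and the Marcus--Lushnikov clock is a \emph{random} time change (a binomial count of coalescences composed with the exponential clocks), not ``an explicit function of $\lambda$ and $n$''; the uniform convergences $W_{n,\lambda}\to\lambda$ and $n^{-1/2}\l(n-M^n\l(s+\tfrac12\log n\r)\r)\to e^{-s}$ recorded in the paper are exactly what is needed to make that composition legitimate in the Skorokhod sense.

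The multiplicative half has a genuine gap. The step you defer to ``Aldous's uniqueness of (extremal) Feller extensions with prescribed marginal distributions'' is not available in that form, and even granting some such statement it would not do the required work: a process with the marginals $\bgam^\times(\lambda)$ and continuous sample data need not be a multiplicative coalescent, since one-dimensional marginals plus regularity of a particular version do not determine the transition mechanism; moreover ``Feller continuity of the candidate limit'' is a property of a semigroup indexed by initial states, and cannot be extracted from the joint continuity of $(\lambda,x)\mapsto\sy_\times^{(\lambda)}(x)$ together with continuity of the excursion-length functional. The whole difficulty behind Aldous's Question~6.5.3 is precisely to show that the limit has the multiplicative-coalescent \emph{dynamics}; weak convergence of Markov processes does not by itself transfer the Markov property or the merging rates to the limit, and your appeal to stochastic continuity does not rule out a limit with the right marginals but a different dependence structure. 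The paper's proof supplies the missing transfer: after the time-homogenisation of Note~\ref{note:homogeneous}, the discrete process is for each $n$ \emph{exactly} a multiplicative coalescent started from $\bgam^{\times,n}(\lambda_1)$, and one applies the Feller property of the already-constructed $\ell^2_\da$-valued multiplicative coalescent semigroup \cite[Proposition~5]{aldous1997} to the \emph{known} semigroup (not to the candidate limit): since $\bgam^{\times,n}(\lambda_1)\to\bgam^{\times}(\lambda_1)$ in $\ell^2_\da$, the conditional laws at later times converge to the coalescent semigroup applied to $\bgam^{\times}(\lambda_1)$, so the limiting finite-dimensional distributions are those of a multiplicative coalescent, and the fixed-$\lambda$ marginal (scaling limit of the cluster sizes of $G(n,p_\lambda(n))$) identifies it as the standard one; this also disposes of the ``loss of mass at infinity'' worry, which is exactly what the Feller property controls. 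As written, your plan omits this step, and that is where it fails.
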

There, the statement means that $(\bgam^+(e^{-t}))_{t\in \bbR}$ is a Markov process 
taking values in $\ell^1_\da$ such that for every $t$, $\bgam^+(e^{-t})$ is distributed as follows 
\cite{AlPi1998a}. Consider a Brownian continuum random tree $\cT$ \cite{Aldous1991b} with mass measure 
$\mu$ and length measure $l$ on its skeleton $\sk(\cT)$. 
Consider a Poisson point process $\cP$ of intensity measure $l\otimes ds$ on $\sk(\cT)\times [0,\infty)$. 
At time $s$, splits ${\cal T}$ at the marks $u$ such that $(u,t)\in \cP$ and $t\leq s$, and denote by 
$\bF(s):=(\cF_1(s), \cF_2(s),\dots)$ the sequence of the $\mu$-masses of the connected components (subtrees)
obtained, sorted in decreasing order. Then, for 
every $s\in \bbR$, we have $\bF(s)\in \ell^1_\da$ and $\|\bF(s)\|_1=1$. With this setting, 
$(\bgam^+(s))_{s\in \bbR}$ and $(\bF(s))_{s\in \bbR}$ have the same distribution, 
a result which is originally due to \citet{BER}. 

In the multiplicative case, this means that $(\bgam^\times(\lambda))_{\lambda \in \bbR}$ is a 
Markov coalescent process taking values in $\ell^2_\da$ such that for every $\lambda\in \bbR$, 
the vector $\bgam^\times(\lambda)$ is distributed as the limit rescaled component sizes of the 
random graph $G(n,p_\lambda(n))$ for 
\ben\label{eq:plambda}
p_\lambda(n)=\frac{1}{n}+\frac{\lambda}{n^{4/3}}.
\een 
The existence of such a process, the standard multiplicative coalescent, has been proved by  
\citet[][Corollary 24]{aldous1997} by resorting to Kolmogorov's extension theorem. 
Here, we provide an explicit construction of the process from a single Brownian motion. 
The fact that the coalescing rates are multiplicative is a direct consequence of weak 
convergence used for the construction.
The proofs of Theorem~\ref{thm:conv-coal} and of Corollary~\ref{thm:main_continuous} 
are postponed until Section~\ref{sec:proofs_sequences}.

\medskip
In the multiplicative case, we also construct a version of the \emph{standard augmented multiplicative 
coalescent} of \citet{BhBuWa2013b} as a ``decorated'' process of $\bgam^\times$. 
For a connected graph, let the \emph{excess} be the minimum number 
of edges that one must remove in order to obtain a tree. Then, the augmented multiplicative 
coalescent is the scaling limit of the sizes and excesses of the connected components of $G(n,p_\lambda(n))$, that is of $(\bgam^{\times,n}(\lambda),\bs^n(\lambda))$ where $\bs^n(\lambda)=(s_i^n,i\geq 1)$ and $s_i^n$ is the excess of the $i$th largest connected component of $G(n,p_\lambda(n))$. 
The zero-set $\{x \geq 0: \Psi \sy_\times^{(\lambda)}(x)=0\}$ separates the half-line $\bbR^+$ 
into countably many open intervals $(I_i(\lambda))_{i\ge 1}$ whose lengths are precisely 
the components of the vector $\bgam^\times(\lambda)$. 
Let $\Xi$ be a Poisson point process with unit rate on $\bbR^+\times \bbR^+$. Then, for each 
$\lambda \in \bbR$ and for each $i\ge 1$, let $s_i(\lambda)$ denote the number of points of $\Xi$ 
falling under the graph of $\Psi y_\times^{(\lambda)}$ on the interval $I_i(\lambda)$, the 
interval corresponding to the $i$-th longest excursion of $\Psi y_\times^{(\lambda)}$:
\[s_i(\lambda):=\#\big\{(x,w)\in \Xi: x\in I_i(\lambda), w\le \Psi \sy_\times^{(\lambda)}(x)\big\}.\] 
Then write $\bs(\lambda)=(s_i(\lambda))_{i\ge 1}$.
The state space of interest is now $\bbU_\da$ defined by 
\[
\bbU_\da :=\bigg\{
(\bx,\bs)\in \ell^2_\da\times `N^\infty: \sum_{i\ge 1} x_i s_i <\infty  
\text{~and~} s_i=0 \text{~whenever~} x_i=0\bigg\}
\]
endowed with the metric 
\[
\dist_\bbU( (\bx,\bs), (\bx',\bs') ) 
:= \Bigg(\sum_{i\ge 1} |x_i-x_i'|^2 \Bigg)^{1/2} 
+ \sum_{i\ge 1} |x_i s_i- x_i' s_i'|~.
\]

\begin{theo}\label{thm:augmented_conv}
The following convergence
\[
((\bgam^{\times,n}(\lambda),\bs^n(\lambda)): \lambda \in \bbR)
\dd 
((\bgam^\times(\lambda), \bs(\lambda)): \lambda \in \bbR)
\]
holds in $\bbD(\bbR, \bbU_\da)$.
In particular, $(\bgam^\times(\lambda), \bs(\lambda))_{\lambda\in \bbR}$ is a version 
of the standard augmented multiplicative coalescent.
\end{theo}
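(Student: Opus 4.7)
The plan is to augment the Prim-based encoding that underlies Theorem~\ref{thm:conv-coal} with the additional data of the \emph{back edges} (also called surplus edges): while running Prim's algorithm on the complete graph equipped with i.i.d.\ uniform weights, each edge processed is either a tree edge (contributing to the walk $\sy_\times^{(\lambda),n}$) or a back edge whose two endpoints already lie in the current cluster. The number of back edges processed while exploring the cluster that corresponds to the $i$-th excursion of $\Psi\sy_\times^{(\lambda),n}$ is exactly the excess $s_i^n(\lambda)$ of that cluster. Building on this observation, I would introduce a discrete point process $\Xi^n$ on $\bbR^+\times\bbR^+$ by placing, for every back edge $e$, a point whose first coordinate is the (rescaled) position at which $e$ is processed in the Prim order and whose second coordinate is a (rescaled) quantity tailored so that, by construction,
\[s_i^n(\lambda)=\#\bigl\{(x,w)\in\Xi^n:\; x\in I_i^n(\lambda),\ w\le \Psi\sy_\times^{(\lambda),n}(x)\bigr\}\]
for every $\lambda$ and every $i$, in perfect analogy with the definition of $s_i(\lambda)$ from $\Xi$ in the statement.

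The second step is to establish the joint convergence $(\sy_\times^{(\lambda),n},\Xi^n)\dd (\sy_\times^{(\lambda)},\Xi)$ as processes in $(\lambda,x)$, with $\Xi$ a unit-rate Poisson point process independent of the limiting walk. Conditionally on the sequence of tree edges (which determines $\sy_\times^{(\lambda),n}$), the back edges are chosen uniformly among the remaining ``internal'' pairs of vertices inside each cluster, and their weights are uniform on the relevant intervals; a direct counting argument, in the spirit of the conditional Poissonization used in the BFS-based analysis of \citet{BhBuWa2013b}, shows that the mean number of points of $\Xi^n$ in any fixed rectangle converges to its Lebesgue area, and yields the joint convergence. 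A crucial feature here is that the underlying uniform marks on $K_n$ are fixed once and for all, so $\Xi^n$ is a single random measure that ``reveals itself'' as $\lambda$ grows; its scaling limit $\Xi$ is therefore a single Poisson process, which automatically gives the consistency of the construction in $\lambda$ and the required joint convergence of the marginals.

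The third step is a continuous-mapping argument transferring the joint convergence to $(\bgam^{\times,n}(\lambda),\bs^n(\lambda))\dd(\bgam^\times(\lambda),\bs(\lambda))$ in $\bbD(\bbR,\bbU_\da)$. Almost surely the limit walk $\sy_\times^{(\lambda)}$ has all excursion heights distinct and no point of $\Xi$ lies on its zero set or on its graph; consequently, the map sending $(f,\mu)$ to the pair (ordered excursion lengths of $\Psi f$, counts of points of $\mu$ lying below the graph of $\Psi f$ in each excursion interval) is continuous at $(\sy_\times^{(\lambda)},\Xi)$ almost surely. The first component of the image recovers Theorem~\ref{thm:conv-coal}, while the integer-valued second component produces $\bs(\lambda)$.

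The main obstacle is tightness in the metric $\dist_\bbU$. The $\ell^2_\da$-summand is handled by the tightness underlying Theorem~\ref{thm:conv-coal}, but the mixed summand $\sum_{i\ge 1}|x_is_i-x_i's_i'|$ requires a uniform control of the form
\[\lim_{K\to\infty}\limsup_{n\to\infty}\sup_{\lambda\in[-M,M]}\E{\sum_{i>K}\gamma_i^{\times,n}(\lambda)\,s_i^n(\lambda)}=0\]
for every $M>0$. By the expression of $s^n_i(\lambda)$ derived above, this estimate reduces to a bound on the total area of the small excursions of $\Psi\sy_\times^{(\lambda),n}$, each weighted by the number of back edges it contains; it is the principal technical burden of the proof, and I expect it to be derived from moment bounds on the areas under Brownian-type walks with parabolic drift together with critical random-graph estimates on the contribution of small clusters, uniformly in $\lambda$ on compacts. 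Once tightness in $\bbU_\da$ is secured, the Skorokhod convergence on $\bbD(\bbR,\bbU_\da)$ follows, and the identification of the limit with the standard augmented multiplicative coalescent of \cite{BhBuWa2013b} is automatic since the latter is characterised by its finite-dimensional marginals.
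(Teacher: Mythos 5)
Your first three steps reproduce the paper's construction: recording the surplus via the field of uniforms left over from the Prim exploration (the paper's $P^n_i(j):=U^\star_{i+1}(i+1+j)$ from Corollary~\ref{cor:dist_incr}), so that the excess of a cluster is the number of field points under the graph of $Z^t$ with value at most $t$; the convergence of the rescaled Bernoulli point set to a unit-rate Poisson process independent of $\lambda$; and the fdd convergence obtained on a Skorokhod representation space using the fact that disjoint sets of uniforms drive the walk and the surplus. Up to there your plan matches the paper essentially line by line.

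The genuine gap is the tightness step, which you yourself flag as ``the principal technical burden'' and then only assert should follow from unspecified ``moment bounds''. Two concrete ingredients are missing. First, the fixed-$\lambda$ estimate: the paper controls $\Ec{\gamma_i^{\times,n}(\lambda)\, s_i^n(\lambda)}$ by noting that, conditionally on the excursion, each field point under the graph yields a surplus edge with probability $p_\lambda(n)$, so $\Ec{\gamma_i^{\times,n}(\lambda)\,s_i^n(\lambda)}=p_\lambda(n)\,\Ec{\gamma_i^{\times,n}(\lambda)\,a_i^n(\lambda)}$ with $a_i^n$ the discrete area under the excursion; given the excursion length $m n^{-2/3}$, this area is that of the tilted tree $\tilde T^p_m$ of \cite{AdBrGo2012a}, bounded by $m\,h(\tilde T^p_m)$, and the height bound of \cite[Lemma~25]{AdBrGo2012a} gives $\Ec{a(\tilde T^p_m)}\le C\max\{m^{3/2}/n,1\}\,m^{3/2}$, whence a bound by $\max\{\|\bgam^{\times,n}(\lambda)\|_2^4,\|\bgam^{\times,n}(\lambda)\|_2^{5/2}\}$ and marginal tightness in $\bbU_\da$ from $\ell^2$-tightness. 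This computation is the quantitative heart of the theorem and is absent from your sketch; ``moment bounds on areas under Brownian-type walks'' does not by itself control the weighting of each small excursion by its surplus count. Second, the uniformity in $\lambda$: your proposed criterion $\sup_{\lambda\in[-M,M]}\E{\sum_{i>K}\gamma_i^{\times,n}(\lambda)s_i^n(\lambda)}\to 0$ is delicate precisely because the tail sum is not monotone in $\lambda$ (mass migrates into the top $K$ indices at coalescence times), the same difficulty already met in the $\ell^2$ part of Theorem~\ref{thm:conv-coal}; the paper does not prove such a uniform bound, but instead obtains relative compactness in $\bbD([\lambda_\star,\lambda^\star],\bbU_\da)$ from the marginal bound together with the (almost) Feller property of the augmented coalescent of \cite{BhBuWa2013b}, applied through the monotone cumulative functional $\Sigma(\bx,\bs)=\big(\sum_{i\le k}x_i^2+\sum_{i\le k}x_is_i\big)_{k\ge 1}$ and the compactness criterion of Lemma~\ref{lem:compact_coal}. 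Without the area/height estimate and without either your uniform bound actually proved or a substitute for the Feller-property argument, the proof does not close.
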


Observe that the metric structure of the connected 
components obtained in \cite{AdBrGo2012a} from a similar representation at fixed $\lambda$ seems to be
ruined by the random Prim order. A careful look at Section~\ref{seq:POLR} should suffice to 
convince the reader that the very idea of obtaining a representation that is consistent in $\lambda$ is 
incompatible with tracking the internal structure of connected components.

\section{Combinatorial coalescence processes and their encodings}  

\subsection{The multiplicative case: critical random graphs}
\label{sub:rgp}

The aim of this part is to present some elements concerning the multiplicative coalescence 
processes, our new approach, and the main steps to the proofs of Theorem \ref{thm:conv-coal} 
and \ref{thm:main_continuous}.
    
We first define the random graph process on the vertex set 
$[n]:=\{1,2,\dots, n\}$, for a positive integer $n$. Let $E^n=\{\{i,j\}, i\neq j, i,j \in [n]\}$ 
denote the set of pairs of elements of $[n]$, the set of edges. Let $(U_e)_{e\in E^n}$ be a collection 
of i.i.d.\  uniform random variables on $[0,1]$. Let $G(n,p)$ be the graph on $[n]$ consisting of the edges $e\in E^n$
for which $U_e\le p$. Then, $(G(n,p))_{p\in[0,1]}$ is the classical random graph process \cite{Bollobas2001,JaLuRu2000}. 
It is a Markov process but not time-homogeneous (as it would have been if instead of uniform random variables 
we would have used exponential ones). The ordered sequence of 
sizes of connected components $(|C^n_i(t)|)_{i\ge 1}$ is also a Markov process, for which 
the initial state is $(1,1,\dots,1)$ and the components of the vector coalesce at rate which 
is proportional to the product of their values. Indeed, conditionally on $G(n,t)$, the 
next edge to be added is equally likely among the ones which are not already present, 
so that the probability that it joins a vertex of $C_i^n(t)$ to one of $C_j^n(t)$ is proportional 
to $|C_i^n(t)|\times |C_j^n(t)|$.  Thus, up to a time change, the connected components in $G(n,p)$ 
behave as the multiplicative coalescent. 
 
To obtain a limit theorem for these connected component sizes as a time-indexed process, our approach 
uses ideas from the proof by Aldous \cite{aldous1997} of the convergence at a fixed time. He encodes the connected
components into a discrete random real-valued process whose convergence implies the convergence of
the sizes of the connected component. To get suitable limit theorem, the probability $p$ has to be chosen 
inside the \emph{critical window}, that is of the form $p=p_\lambda(n)$, as defined in \eref{eq:plambda}.
The method of Aldous relies on a breadth-first traversal of the graph $G(n,p_\lambda(n))$.
It is easily seen that, in the context of the random graph $G(n,p)$, the following ``smallest-label-first'' 
traversal has the same distribution, so that the results of Aldous \cite{aldous1997} apply when using this modified 
algorithm. 
In the following, we call \emph{neighbourhood} of a set of vertices $S$ the collection of nodes that 
have an edge to a node in $S$, but are not themselves in $S$.

\begin{algo}[Standard traversal]\label{alg:aldous}
Traverse the vertices of a graph on $[n]$ as follows:
\begin{compactitem}[\textbullet]
    \item Start at step $k=1$ with node $v_1=1$ and set $S_1=\{v_1\}$.
    \item At step $k+1\in \{2,\dots, n\}$, the nodes $v_1,\dots, v_k$ are already known,
    and we have $S_k=\{v_1,\dots, v_k\}$. 
    Let $v_{k+1}$ be the node with smallest label
    among the neighbours of $S_k$, or if the neighbourhood of $S_k$ is empty, $v_{k+1}$ is 
the node  with smallest label in $[n]\setminus S_k$.
\end{compactitem}  
\end{algo}

Denote by $Z_k^{n,p_\lambda(n)}$ the size of the neighbourhood of $S_k$ and set 
\[Y_k^{n,p_\lambda(n)}= Z_k^{n,p_\lambda(n)} -\#\{j \leq k, Z_k^{n,p_\lambda(n)}=0\}.\]
Then, the sizes of the connected components of $G(n,p_\lambda(n))$ are precisely the 
lengths of the intervals between the zeros of $(Z_k^{n,p_\lambda(n)}, 1\le k\le n)$ 
(see Section~1.3 of \cite{aldous1997} and Lemma~\ref{lem:rel}).
Then define 
\[y^{n,(\lambda)}(x):=\frac{Y^{n,p_\lambda(n)}(n^{2/3}x)}{n^{1/3}}
\qquad \text{and}\qquad 
z^{n,(\lambda)}(x):=\frac{Z^{n,p_\lambda(n)}(n^{2/3}x)}{n^{1/3}}.\]
\citet{aldous1997} proved that, for any fixed $\lambda\in \bbR$,
\ben\label{eq:mono-lambda} y^{n,(\lambda)} 
\dd \sy^{(\lambda)}_\times
\qquad \text{and}\qquad
z^{n,(\lambda)}(x)\dd \Psi\sy^{(\lambda)}_\times, 
\een
where the convergence holds for the topology of uniform convergence on every compact.  

We propose to modify a bit the traversal of the graph in Algorithm~\ref{alg:aldous}: instead of using the labels 
order to define the traversal, use the Prim order (see Section \ref{seq:POLR} for more details): that is proceed as in 
Algorithm~\ref{alg:aldous} but replace the two instances of ``the node  with smallest label'' by ``the node with 
smallest Prim rank''. 
Observe that the Prim order on $G(n,p)$ is defined using the weights $(U_e)_{e\in E^n}$ only, and thus does not depend 
on $p$, unlike the order given by the standard traversal used by Aldous. In the following, we add the subscript ``$\times$''
in the notation for the random variables defined using this modified Prim traversal, and we set 
\[\sy^{n,(\lambda)}_\times(x):=\frac{Y^{n,p_\lambda(n)}_\times(n^{2/3}x)}{n^{1/3}}\]
where $Y_\times$ is assumed to be interpolated between integer
points. Observe that in the superscript of $\sy^{n,(\lambda)}$, the
superscript $(\lambda)$ corresponds to the parameter $p_\lambda(n)$ defined in \eqref{eq:plambda}.
In the following, the processes $\lambda\mapsto \sy^{n,(\lambda)}_\times$ and $\lambda\mapsto \sy_\times^{(\lambda)}$ 
are denoted more simply by $\sy_\times^{n}$ and $\sy_\times$.
\begin{theo}\label{theo:mainx} The following convergence holds in $\bbD(`R, \bbC([0,\infty),`R))$,
\ben 
\label{eq:mast1}\sy^{n }_\times \dd \sy_\times 
\een 
where $\bbC([0,\infty),`R)$ is the set of continuous functions 
from $[0,\infty)$ with values in $`R$.
\end{theo}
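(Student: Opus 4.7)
The plan is to establish the convergence in two stages: first, fixed-$\lambda$ marginal convergence $\sy^{n,(\lambda)}_\times \dd \sy^{(\lambda)}_\times$, and then the upgrade to joint process convergence in $\lambda$. The decisive feature that makes the second stage tractable is that the Prim order is a function of the edge weights $(U_e)_{e\in E^n}$ alone and is \emph{independent of $\lambda$}. It therefore furnishes a single vertex ordering $v_1,\dots,v_n$ that is consistent across all values of $p_\lambda(n)$, so the encodings $\sy^{n,(\lambda)}_\times$ for different $\lambda$ are coupled by a single exploration in which, as $\lambda$ increases, only the subset of ``active'' edges (those with weight $\le p_\lambda(n)$) grows monotonically.

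For the fixed-$\lambda$ step, I would adapt the random-walk representation of Aldous \cite{aldous1997}. At step $k+1$ of the Prim-order exploration, the identity of $v_{k+1}$ is dictated by the lightest edge leaving $S_k = \{v_1,\dots,v_k\}$; conditionally on the past and on $v_{k+1}$, the statuses (present or absent in $G(n,p_\lambda(n))$) of the remaining edges from $v_{k+1}$ to $[n]\setminus S_{k+1}$ are still close to i.i.d.\ Bernoulli$(p_\lambda(n))$, the bias caused by Prim's selection rule being of lower order at the scale $p_\lambda(n) = 1/n + \lambda/n^{4/3}$. A compensation argument as in Aldous then produces, after the scaling $x \mapsto n^{-1/3} Y^{n,p_\lambda(n)}_\times(n^{2/3}x)$, a Brownian motion with parabolic drift, that is $\sy^{(\lambda)}_\times$.

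For the process convergence, I would fix $\lambda_1 < \dots < \lambda_m$ and observe that, because the Prim order is $\lambda$-invariant, the joint vector $(\sy^{n,(\lambda_1)}_\times,\dots,\sy^{n,(\lambda_m)}_\times)$ is encoded along a single exploration driven by an increasing family of edge sets. Writing each coordinate as a compensated random walk and computing joint quadratic variations and cross-compensators shows that these converge to a coupled family $(\sy^{(\lambda_1)}_\times,\dots,\sy^{(\lambda_m)}_\times)$ driven by a \emph{single} Brownian motion $\sB$, merely shifted by the distinct parabolic drifts $\lambda_i x - x^2/2$. Tightness in $\bbD(\bbR,\bbC([0,\infty),\bbR))$ then reduces to a modulus-of-continuity estimate in $\lambda$: on a compact window $x\in[0,T]$, a jump in $\lambda \mapsto \sy^{n,(\lambda)}_\times$ occurs exactly when an edge's weight crosses the threshold $p_\lambda(n)$, and a direct counting argument shows that the number of such crossings over an interval $[\lambda,\lambda+\delta]$ is of order $\delta T n^{1/3}$, each producing an increment of order $n^{-1/3}$ in the encoding, which is enough to control oscillations uniformly on compacts.

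The main obstacle will be this joint finite-dimensional convergence. Although each marginal follows from a direct adaptation of Aldous's proof and the Prim order provides a natural coupling, verifying that the limiting processes for distinct $\lambda_i$ are \emph{additively shifted copies} of the same Brownian motion $\sB$ (and not, say, independent Brownian motions) requires an honest computation of the joint compensators of the discrete exploration under the Prim coupling, and a check that the difference $\sy^{n,(\lambda_j)}_\times - \sy^{n,(\lambda_i)}_\times$ concentrates on a deterministic parabolic shift. Once this identification is in place, the combination of tightness and the identification of finite-dimensional limits yields the claimed convergence in $\bbD(\bbR,\bbC([0,\infty),\bbR))$.
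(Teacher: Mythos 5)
Your overall architecture is the same as the paper's: exploit the fact that the Prim order depends only on the weights and not on $\lambda$, prove the fixed-$\lambda$ marginal by an exploration-walk argument \`a la Aldous, identify the joint limits for several $\lambda$'s as a single Brownian motion with different drifts, and control oscillations in $\lambda$ by counting weight-crossings of the moving threshold $p_\lambda(n)$. However, as written the proposal has two genuine gaps, and they are exactly the places where the paper's proof does its real work.

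First, the statement that the statuses of edges out of $v_{k+1}$ are ``close to i.i.d.\ Bernoulli$(p_\lambda(n))$, the bias caused by Prim's selection rule being of lower order'' is both unsubstantiated and weaker than what the argument needs. For the complete graph there is in fact \emph{no} bias: since Prim's rule only inspects weights of edges incident to the current component, the weights from the newly added vertex to the unexplored vertices remain exactly i.i.d.\ uniform, and $Z^{\bG_t}_\prec$ has exactly the law of Aldous' walk (Lemmas~\ref{lem:red} and~\ref{lem:prpr}). More importantly, for the process-level statement you need this conditional independence to hold \emph{jointly in $t$}, i.e.\ conditionally on the whole trajectory $(\cZ^t(i))_{t\in[0,1]}$, so that a single field of i.i.d.\ uniforms drives the walks for all $\lambda$ simultaneously; this is the content of Lemma~\ref{lem:dist_incr} and Corollary~\ref{cor:dist_incr}, and it is nontrivial precisely because the Prim order is built from the same weights. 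An unquantified ``lower-order bias'' carried through a two-parameter limit would not close the proof.

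Second, you yourself flag the identification of the joint limits as the main obstacle and then defer it (``requires an honest computation of the joint compensators \dots Once this identification is in place\dots''), so the decisive step is asserted rather than proved. The paper resolves it without any martingale machinery: using the exact representation above, it sandwiches the increments $X^{(\lambda)}(i)$ between binomials $\underline X^{(\lambda)}(i)$ and $\bar X^{(\lambda)}(i)$ (valid until $Z$ exceeds $n^{1/2}$, which does not happen on the relevant window), and then observes that $\bar Y^{(\lambda^\star)}(n^{2/3}x)-\bar Y^{(\lambda)}(n^{2/3}x)$ is binomial with success probability $(\lambda^\star-\lambda)n^{-4/3}$, whence $\sup_{x\le a}|\sy^{n,(\lambda^\star)}_\times(x)-\sy^{n,(\lambda)}_\times(x)-(\lambda^\star-\lambda)x|\to 0$ in probability; note the shift is \emph{linear}, not parabolic, since the $-x^2/2$ term is common to all $\lambda$. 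Tightness is then obtained by discretising $\lambda$ into $n$ values, interpolating, and applying a fourth-moment (Kolmogorov-type) criterion to $\bar\sy$ and $\underline\sy$; your crossing-count heuristic is of the right order, but as stated it only controls expectations and would still need such a moment or maximal-inequality argument to give a uniform modulus of continuity.
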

The proof is postponed until Section~\ref{sec:enc_mult} (and more details on the distribution of 
$(\sy^{n,(\lambda)}_\times, \lambda \in `R)$ are given in Section \ref{sec:dissyn}).

Observe that for a fixed $\lambda$, the convergence \eref{eq:mono-lambda} obtained by \citet{aldous1997} 
implies that $\sy^{n,(\lambda)}_\times\to \sy_\times^{(\lambda)}$ in distribution, provided that we additionally 
prove that $\sy^{n,(\lambda)}_\times$ and  $y^{n,(\lambda)}$ have the same distribution, a fact that we prove in
Lemma~\ref{lem:red}. We also provide a direct proof of the fixed-time convergence in 
Section~\ref{sec:newproof_Aldous}.

\begin{note}\label{note:disc}
When we are talking about interpolated discrete processes and discrete coalescence, a slight 
modification in the definition of excursions has to be done in order to obtain an exact correspondence 
between the cluster sizes and excursion sizes. 
For the excursion away from zero, $f(a)=f(b)=0$ and $b=\inf\{t>a: f(t)=0\}$ has to be replaced by 
$f(a)=f(b)=0$ and $b=\inf\{t>a+\alpha_n: f(t)=0\}$, where $\alpha_n$ is the size of a rescaled discrete step. 
The discrete excursions above the current minimum are defined by 
$a=\min\{t: f(t)=f(a)\}$, and $b=\min\{t: f(t)=f(b)\}$ with $f(b)=f(a)-\beta_n$,
where $\beta_n$ is the space normalisation. 
\end{note}

\begin{note}\label{note:homogeneous}
In order to obtain exactly the (time-homogenenous) Markovian coalescent from the random graph process, one only 
needs to consider a new time parameter given by $t=-\ln(1-p_\lambda(n))$. However,  as $n\to\infty$, 
$-\ln(1-p_\lambda(n))$ and $p_\lambda(n)$ behave similarly (at the second order), and the study of coalescent 
can be done using $p_\lambda(n)$.  We use $p_\lambda(n)$ in order to stay closer to the random graph model, 
as did Aldous \cite{aldous1997}.
\end{note} 

\subsection{Additive coalescence processes}

In the three next subsections, we treat the different combinatorial coalescence processes 
related to the additive coalescent. The main references here are \cite{EvPi1998, Pitman1999b, AlPi1998a, BER, CL,Pitman2006}. 

\subsubsection{The combinatorial coalescence process $\FCP_+^{\textsc{ap}}$}
\label{sec:Pitman-Aldous}

The following discussion relies on the results by \citet{AlPi1998a}, see also \citet[][(ii)' p.\ 170]{Pitman1999b}. 
We define a process of random forests of unrooted labelled 
trees $F(n,s)$, $s\ge 0$ as follows. At time $s=0$, the forest $F(n,0)$ consists of $n$ isolated trees 
$t_1,\dots,t_n$ where $t_i$ is reduced to the node $i$ alone. When the number of trees is $m$, wait 
an exponential random variable with parameter $m-1$, then pick a pair of trees $(t_i,t_j)$ with 
$1\leq i<j\leq m$ with probability $(|t_i|+|t_j|)/(n(m-1))$, and add an edge between a uniform node 
in $t_i$ and a uniform node in $t_j$.
Considering only the rescaled tree sizes $C^{n,s}=(C_i^{n,s}/n,i\geq 1)$ of the forest $F(n,s)$ 
(sorted and completed by an infinite sequence of 0), we have
\[(C^{n,s},s\geq 0)\sur{=}{(d)}(X_+^n(s), s\ge 0).\]
Since any pair of trees coalesces with probability proportional to the sum of their sizes, we 
just need to check that the same time-scale arises in the additive coalescent. This is indeed the case, 
since in the latter, when $m$ particles with total unit mass are present, the first coalescence occurs 
after a time equal to the minimum of independent exponential random variable with parameters $K(x_i,x_j)$, 
and $\sum_{1\leq i<j \leq m}K(x_i,x_j)=\sum_{1\leq i<j \leq m}x_i+x_j =m-1.$ Thus in the present coalescent, 
if one takes $(E_i,1\leq i\leq n-1)$ a sequence of i.i.d. exponential r.v. with parameter 1, then the 
number of coalescences before time $s$ is
\[M^n(t)=\max\bigg\{m\ge 0: \sum_{j=1}^m \frac{E_j}{n-j}\leq s,\bigg\}\]
and
\[n^{-1/2}\l(n-M^n\l(s+\frac{1}2\log n\r)\r)\to e^{-s},\]
where the convergence holds in $\bbD((-\infty,+\infty),\bbR)$ (the convergence holds in fact 
uniformly on any compact $[-\lambda_\star,\lambda^\star]$).

\subsubsection{The combinatorial coalescence process $\FCP_+^{\textsc{cl}}$}
\label{sec:CL}

We now present quickly the model and results of Chassaing \& Louchard \cite{CL}.
Assume $n$ cars park on a circular parking, identified with $\mathbb{Z}/n\mathbb{Z}$, 
according to the following algorithm. Let $({\sf Ch}_i,1\leq i\leq n)$ be a family of 
i.i.d.\ random variables uniform on $\mathbb{Z}/n\mathbb{Z}$. The cars park successively.
When the $i-1$ first cars have already parked, car $i$ chooses place ${\sf Ch}_i$ 
and parks at the first available place in the list ${\sf Ch}_i$, ${\sf Ch}_{i}+1 \!\!\!\mod\! n$, 
${\sf Ch}_{i}+2 \!\!\!\mod\! n$... 
Assume that $m$ cars are parked and call \it  block \rm a sequence of adjacent occupied 
places. As explained in \cite[Section 8]{CL} to get a suitable relation with the additive coalescent, 
the correct notion of size for a block is the number of cars consecutively parked plus one. 
This model coincides exactly with the Marcus--Lushnikov additive process up to a random time change.  
When $m=m(n)=\floor{n-\lambda\sqrt{n}}$ cars are parked, the large $n$ asymptotic 
evolution of the sizes of these blocks (sorted in decreasing order)
\[B^{n,\lambda}:=\frac{1}{n}(B^{n,\lambda}_i,i\geq 0)\] 
is given by the standard additive coalescent up to a time change. 
Here are some precisions on this time change: the time $\floor{n-\lambda\sqrt{n}}$ coincides with the number 
of coalescence done. From what we said above in the additive coalescent, this occurs at a random time of order    $t+(1/2)\log (n)$ for $t$ such that $\exp(-t)=\lambda$, so that for a fixed $\lambda >0$  one can prove
\[B^{n,\lambda}\dd X^{\infty}(-\ln(\lambda)).\]
\black
More precisely, Chassaing and Louchard obtained in \cite[Theorem 1.3]{CL} the convergence of the sizes 
of the $k$ largest blocks to that of the $k$ largest excursions of $\Psi \sy_+^{(\lambda)}$. 

\subsubsection{The new combinatorial coalescence process $\FCP_+$}
\label{sec:our_model}

Our new combinatorial coalescence process is in the mean time very close to that of \citet{CL} and to that 
of Pitman (Section \ref{sec:Pitman-Aldous} above). The new idea of Prim's order makes the connection 
between these two models very clear, in
a way that is both different from the one discussed in \cite[Section 8]{CL}, and 
similar to our approach to the multiplicative coalescent.

Although the intuition comes from the percolation model on the uniformly random labelled tree, 
it is convenient for the proofs to construct the process $\FCP_+$ as follows. The connections 
with the parking model and the fragmentation on trees are made later on in Section~\ref{sec:add_unify}
Consider a sequence of i.i.d.\ Poisson random variables $(X(i),1\leq i \leq n)$
with parameter one, and associate to this sequence the random walk
\[Y(m)=\sum_{j=1}^m (X(j)-1),\qquad 0\leq m \leq n.\]
Denote by $\tau_{-1}=\inf\{ m~:~Y(m)=-1\}$ the hitting time of $-1$.

Further, denote by $(Y_+^n(m),0\leq m \leq n)$ the random walk $Y$ conditioned on $\tau_{-1}=n$. 
Denote by $(X_+^{n}(i),1\leq i \leq n)$ the increments of $Y_+^n$ 
(that is under the condition that $\tau_{-1}=n$). 
Now introduce an array $(U_{k}(\ell),1\leq k \leq n, 1\leq \ell \leq n)$ of i.i.d.\ ${\sf uniform}[0,1]$ 
random variables and, conditionally on the family $(X^{n}_+(i),1\leq i \leq n)$, 
define the family $(X^{n,(t)}_+(i), 0\leq t \leq 1, 1\leq i \leq n)$, by
\[X_+^{n,(t)}(i)=\sum_{j=1}^{X_+^{n}(i)} 1_{U_{i}(\ell)\leq t},
~~\textrm{ for any } t\in[0,1], 1\leq i \leq n.\] 
Hence, $X^{n,(0)}_+(i)=0$, $X^{n,(1)}_+(i)=X^{n}_+(i)$ and for any $i$,  $t\mapsto X_+^{n,(t)}(i)$ 
is non-decreasing. 
Define
\[Y^{n,(t)}_+(m)= \sum_{j=1}^m (X^{n,(t)}_+(j)-1),~~0\leq m \leq n, ~t\in[0,1].\]

From now on, consider that $(Y^{n,(t)}_+(m),0\leq m \leq n)$ is a continuous process in the variable
$m$, obtained by linear interpolation between integer points. Set  
\[\sy_+^{n,(\lambda)}(x)=\frac{Y^{n,(1-\lambda/\sqrt{n})}_+(nx)}{\sqrt{n}}, 
~~~ x\in[0,1], \lambda \geq 0.\]
The processes $(\lambda,x)\mapsto \sy^{n,(\lambda)}_+(x)$ and $(\lambda,x)\mapsto \sy_+^{(\lambda)}(x)$ 
are denoted more simply $\sy_+^{n}$ and $\sy_+$ in the sequel. 
They are seen as random variables taking their values in $\bbD(`R^+,\bbC([0,1],`R))$. 
In words, for fixed 
$\lambda$, $\sy_+^{(\lambda)}$ is a r.v.\ in $\bbC([0,1],`R)$. 
Seen as a process in $\lambda$ it is right-continuous with left limits.
From what we said earlier $\lambda\mapsto\sy_+^{n,(\lambda)}$ is non-increasing in $\lambda$.

For $\lambda=0$, $Y_+^{n,(0)}$ is just a random walk conditioned to  hit $-1$ at time $n$. 
By a generalisation of Donsker's invariance principle \cite{Donsker1952}, see for 
instance \cite{Kaigh1976,Marckert2007}, we have
\begin{equation}\label{eq:yxn}
\sy^{n,(0)}_+\dd \sy^{(0)}_{+},
\end{equation}
in $\bbC([0,1],\bbR)$ equipped with the topology of uniform convergence (recall that $\sy^{(0)}_{+}=\se$). 
The proof of the next theorem is postponed until Section~\ref{sec:CVFDDyplus}. 

\begin{theo} \label{theo:main+}
The following convergence holds in $\bbD(`R^+, \bbC([0,1],`R))$,
\[ \sy^{n}_+ \dd  \sy_+.  \]
\end{theo}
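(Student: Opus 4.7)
The plan is to leverage the explicit coupling in the construction of $\FCP_+$: all the processes $(\sy_+^{n,(\lambda)})_{\lambda\ge 0}$ are built from the single conditioned walk $Y_+^n$ together with the common array $(U_j(\ell))_{j,\ell}$. I would reduce everything to the fixed-time convergence \eqref{eq:yxn} by establishing the uniform comparison
\[
\sup_{\lambda\in[0,L]}\sup_{x\in[0,1]} \Big| \sy_+^{n,(\lambda)}(x) - \sy_+^{n,(0)}(x) + \lambda x \Big| \proba 0
\]
for every $L>0$. Since the limit $\sy_+^{(\lambda)} = \se - \lambda\,\mathrm{id}$ is \emph{continuous} in $\lambda$ as a $\bbC([0,1],\bbR)$-valued function, Skorokhod convergence in $\bbD(\bbR^+,\bbC([0,1],\bbR))$ to $\sy_+$ is equivalent to locally uniform convergence in $\lambda$; combined with \eqref{eq:yxn}, the display above then yields the theorem by the continuous mapping theorem.

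To prove the display, I would start from the identity
\[
\sy_+^{n,(\lambda)}(x) - \sy_+^{n,(0)}(x) = -\frac{1}{\sqrt n} \sum_{j=1}^{\lfloor nx\rfloor} \sum_{\ell=1}^{X_+^n(j)} \mathbf{1}_{U_j(\ell) > 1-\lambda/\sqrt n},
\]
which follows directly from the definitions. Setting $S_m = \sum_{j=1}^m X_+^n(j)$, the relation $Y_+^{n,(1)}(m) = S_m - m$ combined with \eqref{eq:yxn} gives $\sup_x |S_{\lfloor nx\rfloor}/n - x| = O_P(n^{-1/2})$. Conditionally on $X_+^n$, at fixed $\lambda$, the double sum at time $m$ is distributed as Binomial$(S_m, \lambda/\sqrt n)$ with conditional mean $\lambda S_m/\sqrt n$, and as a process in $m$ its centred version is a conditional martingale with conditional variance at time $n$ bounded by $\lambda S_n/\sqrt n = O(\sqrt n)$. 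Doob's $L^2$-inequality, divided by $\sqrt n$, gives the pointwise-in-$\lambda$ statement $\sup_x |\sy_+^{n,(\lambda)}(x)-\sy_+^{n,(0)}(x)+\lambda x|\proba 0$.

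The upgrade to uniformity in $\lambda\in[0,L]$ is where the \emph{monotonicity} of the coupling is essential: $\lambda\mapsto \sy_+^{n,(\lambda)}(x)$ is non-increasing for every $(n,x)$ while $\lambda\mapsto -\lambda x$ is linear, so sandwiching an arbitrary $\lambda$ between two consecutive points of a finite grid on $[0,L]$ of mesh $\delta$ reduces the uniform error to the pointwise errors at the grid points plus an $L\delta$-correction. Letting $n\to\infty$ first and then $\delta\to 0$ closes the argument. The main obstacle is the Doob step, which requires careful conditioning on $X_+^n$ and a clean use of the bridge-like behaviour of $S_m$ coming from \eqref{eq:yxn}; the monotone coupling then circumvents the need to prove functional tightness directly in $\bbD(\bbR^+,\bbC([0,1],\bbR))$, which would otherwise be the technical bottleneck.
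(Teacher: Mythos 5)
Your proposal is correct, and it reaches the conclusion by a genuinely different route than the paper. The paper splits the work into (i) finite-dimensional convergence (Lemma~\ref{lem:fdd+}), proved by essentially the same decomposition you use at fixed $\lambda$ (comparison of $\sy_+^{n,(\lambda)}$ with $\sy_+^{n,(0)}$, with the centred double sum killed by a variance/Chebyshev bound, but only pointwise in $x$), and (ii) a separate tightness argument in $\bbD(\bbR^+,\bbC([0,1],\bbR))$, which discretises $\lambda$ on a grid of mesh $a/\sqrt n$, controls the oscillation between grid points by Bernstein's inequality, builds an interpolated auxiliary process $\sw^n$, and applies a two-dimensional Kolmogorov-type moment criterion (Kallenberg, Cor.~14.9) via $p$-th moment bounds on increments. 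You bypass the tightness machinery entirely: first you strengthen the fixed-$\lambda$ comparison to a $\sup_x$ statement using the conditional martingale structure of the double sum and Doob's $L^2$ maximal inequality (terminal conditional variance $\le \lambda S_n/\sqrt n=O(\sqrt n)$, with the drift identified through $S_m=m+\sqrt n\,\sy_+^{n,(0)}(m/n)$), and then you exploit the monotonicity of $\lambda\mapsto\sy_+^{n,(\lambda)}(x)$ — a fact the paper records but does not use in its tightness proof — together with the Lipschitz dependence of the comparison function $-\lambda x$ on $\lambda$, to upgrade to uniformity over $\lambda\in[0,L]$ by a finite-grid sandwich (a Dini/P\'olya-type argument); locally uniform convergence to the continuous limit $(\lambda,x)\mapsto\se(x)-\lambda x$ then implies Skorokhod convergence. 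Your route is shorter and more elementary, at the price of relying on the special monotone coupling of the additive model; the paper's moment-criterion route is heavier but more generic, and is deliberately parallel to the tightness argument it needs anyway in the multiplicative case (Section~\ref{sec:tightness_yx}), where no such monotonicity in $\lambda$ is available. Two routine points you should make explicit in a write-up: the coupled array $(U_j(\ell))$ must be reconstructed on the Skorokhod-representation space carrying the a.s.\ version of \eqref{eq:yxn} (as the paper also does), and the interpolation error is harmless since all functions involved are piecewise linear in $x$, so the supremum over $x\in[0,1]$ is attained at the lattice points $m/n$.
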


\subsubsection{Between a parking scheme and fragmentation of Cayley trees}
\label{sec:qsdq}\label{sec:add_unify}

\noindent\textsc{The parking scheme point of view on $\FCP_+$.}\ 
The construction in Section~\ref{sec:our_model} may be interpreted as follows in terms of a parking scheme: 
$X(i)$ is the number of cars whose first choice is place $i$ and that park at the first empty place to the right of $i$. 
The condition $\tau_{-1}=n$ amounts to saying that, in then end, the place $n$ is still empty 
(see \cite{CL} for more details). 
The random variable $X_+^{n,(t)}(i)$ represents the number of cars that have chosen place $i$ by time $t$. 
Observe that conditionally on $\tau_{-1}=n$, $\sum_{i=1}^n X(i)=n-1$, so that $\sum_{i=1}^n X^{n,(t)}_+(i)$ 
is binomial with parameters $n-1$ and $t$. In particular, this is random, unlike in \cite{CL}.
Hence, at time $t=1-{\lambda}/{\sqrt{n}}$ the number of coalescences that already occurred, denoted 
by $N^{n,\lambda}$, is binomial$(n-1,1-\lambda/{\sqrt{n}})$ and it follows that
\[W_{n,\lambda}:=\frac{n-N^{n,\lambda}}{\sqrt{n}}\dd \lambda,\]
the convergence holding in distribution in $\bbD([0,\lambda_\star], \bbR)$ for any $\lambda_\star$, 
since the convergence is uniform on any compact. Indeed one may check that, as a process on 
$[0,\lambda_\star]$, we have 
$W_{n,\lambda}=n^{-1/2}\sum_{i=1}^{n-1} 1_{U^{(i)}\leq \lambda/\sqrt{n}}$ for some $(U^{(i)}, i\ge 1)$ 
i.i.d.\ uniform on $[0,1]$. 
The process $\lambda \mapsto W_{n,\lambda}$ is non-decreasing, and its finite-dimensional 
distributions converge to those of the deterministic process $(\lambda, \lambda \geq 0)$ 
(convergence of the mean, and the variance goes to 0), and thus the convergence is almost sure (a.s.) 
on any compact (see \cite[Appendix]{Marckert2007} if more details are needed).

The convergence of this time change between our model and the discrete coalescence process, together 
with the convergence of the excursion sizes (as a process in $\lambda$) are the main tool to obtain 
the convergence to the additive coalescent. 

\medskip
\noindent\textsc{The percolation point of view.}\
Consider a uniform Cayley tree with $n$ vertices (uniformly labelled tree on $[n]$), and root it at the 
vertex labelled $1$. For a node in $[n]$, let its out-degree be the number of its neighbours that 
are further from the root. Then, it is folklore that the sequence of node out-degrees $(d_i,1\leq 1\leq n)$ 
where the nodes are sorted according to the breadth-first order
is distributed as $(X(i),1\leq i \leq n)$ conditional to $\tau_{-1}=n$, 
as described above. 
The random walk $Y^{n,(0)}_+$ appears in the literature as the \L ukasiewicz walk associated 
with a uniform Cayley tree \cite[see, e.g.,][]{Legall2005}.
Now, equip the edges of the Cayley tree with i.i.d.\ uniform weights $(U_e,e \in E)$ 
(independently of the tree) and keep the edges with weight smaller than $t$, discarding the others. 
One then obtains a forest. 
In this forest $F_t$, let $d_i(t)$ denote the out-degree of the node that had previously rank $i$ 
in the Cayley tree (so that $d_i(1)=d_i$). Then clearly, 
\ben
(d_i(t),~1\leq i \leq n)_{t\in[0,1]}\sur{=}{(d)}(X^{n,(t)}_+(i),~1\leq i \leq n)_{t\in[0,1]}.
\een
The following proposition is a consequence of Lemma \ref{lem:prpr} (and Lemma \ref{lem:red}):
\begin{pro}\label{pro:mach} 
Let $T$ be a uniform Cayley tree on $n$ vertices whose edges are 
(independently) equipped with i.i.d.\ uniform $[0,1]$ weights.
Let $(u_i,1\leq i \leq n)$ be the nodes sorted according to the Prim order (when the root 
node is $u_1=1$), and let $X^t(i)$ be the number of edges between $u_i$ and its children, 
that have a weight at most $t$. Then 
\[(X^t(1),\dots,X^t(n))_{t\in[0,1]}\sur{=}{(d)} (X_+^{n,(t)}(1),\dots,X_+^{n,(t)})_{t\in[0,1]}.\]
As a consequence the collection of excursion sizes of $\Psi Y^{n,(t)}_+$ 
at time $t$ evolves, up to a time change, as the additive coalescent. 
\end{pro}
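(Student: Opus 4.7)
The plan is to decompose the claim into two parts. First, at $t=1$ (i.e.\ using the full tree), Lemma~\ref{lem:prpr} combined with Lemma~\ref{lem:red} identifies the distribution of the Prim out-degree sequence on a uniform Cayley tree as that of $(X_+^n(i), 1 \leq i \leq n)$, mirroring the classical BFS result recalled in Section~\ref{sec:qsdq}. The intuition is that, thanks to the i.i.d.\ uniform edge weights being independent of the tree structure, Prim traverses the uniform Cayley tree with a randomised tie-breaking rule whose effect on the out-degree sequence is distributionally equivalent to the BFS tie-breaking used in the standard bijection with the \L ukasiewicz walk. Second, for the joint law across $t \in [0,1]$, I would match the ``jump-time structure'' of the counting processes $t \mapsto X^t(i)$ on the tree side with that of $t \mapsto X_+^{n,(t)}(i)$ on the walk side.

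The process $X^t(i)$ jumps by $1$ each time $t$ crosses the weight of an edge from $u_i$ to one of its tree-children; thus $X^t(i)$ is entirely determined by the multiset of these edge weights. The key claim is that, conditional on the Prim order (and thus on $(X^1(i))_i$ and on the set of children of each $u_i$), the multiset of weights on the child-edges of $u_i$ is distributed as $X^1(i)$ i.i.d.\ $\textsf{uniform}[0,1]$ random variables, jointly independent across $i$. Granted this, $(X^t(i))_{i,t}$ has precisely the joint law of the Bernoulli thinning used to define $X_+^{n,(t)}(i) = \sum_{\ell=1}^{X_+^n(i)} 1_{U_i(\ell) \leq t}$, concluding the proof of the distributional identity.

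The main obstacle is the ``conditional multiset law'' claim. The crucial point is that Prim's algorithm depends on the edge weights only through their relative ranks; hence, conditional on the Prim order, the joint distribution of the edge weights is uniform on the subset of $[0,1]^{n-1}$ on which Prim's rank comparisons hold. Within each child-family these comparisons impose an order on the weights, but the multiset (which is all that matters for $X^t(i)$) is invariant under the Prim-induced permutations; moreover, child-families of distinct vertices $u_i$ are edge-disjoint, so their multisets are jointly independent once one marginalises over the orderings consistent with the Prim-order data. The consequence about the additive coalescent then follows from the correspondence recalled in Section~\ref{sec:our_model}: the excursions of $\Psi Y^{n,(t)}_+$ above the running minimum coincide with the cluster sizes in the percolation forest on the uniform Cayley tree at level $t$, which evolves as the combinatorial additive coalescent $\FCP_+^{\textsc{ap}}$ of Section~\ref{sec:Pitman-Aldous}, up to the time change $t = 1 - \lambda/\sqrt{n}$.
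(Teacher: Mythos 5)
Your second step rests on a claim that is false: it is not true that, conditionally on the Prim order (and the sets of children of the $u_i$'s), the multiset of child-edge weights of each $u_i$ consists of $X^1(i)$ i.i.d.\ uniforms, independently across $i$. The justification breaks exactly where you assert that the rank comparisons made by Prim's algorithm are ``within each child-family'': at every step the algorithm compares \emph{all} current frontier edges, which in general emanate from different already-explored vertices, so conditioning on the realised Prim order imposes rank constraints \emph{across} child-families; these both bias the multisets and destroy their independence. Concretely, take $n=4$, condition on the tree having edges $\{1,2\},\{1,3\},\{2,4\}$ and on the Prim order being $(1,2,3,4)$: this event is exactly $\{w_{12}<w_{13}<w_{24}\}$, and conditionally on it the pair $(\min,\max)$ of the two child-edge weights of $u_1$ has density $6(1-y)\,\mathbf 1_{x<y}$ instead of $2\,\mathbf 1_{x<y}$, and it is not independent of the weight $w_{24}$ on the child edge of $u_2$ (which is forced to exceed $w_{13}$). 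The defect is not cosmetic: the bias persists even if you condition only on the degree sequence $(X^1(i))_i$ \emph{together with the tree} (it disappears only after averaging over the random tree), so no argument based on ``Prim depends on the weights only through their ranks, and ranks within a family do not affect the multiset'' can be repaired into a proof.

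The statement itself is true, but the argument must be run \emph{forward}, which is what the paper does through Lemma~\ref{lem:red} and the proof of Lemma~\ref{lem:prpr}$(b)$: at the step at which $u_i$ is added, none of its child-edge weights has yet been examined by the algorithm, so conditionally on the exploration history they are i.i.d.\ uniform on $[0,1]$; and by exchangeability of the subtrees pending at the frontier (together with the independence of the tree and the weights), the number of children of $u_i$ has, given that history, the same conditional law as under the standard traversal, i.e.\ that of the conditioned-walk increment. An induction on $i$, carried out jointly in $t$ and never conditioning on the final Prim order, then identifies the law of $(X^t(1),\dots,X^t(n))_{t\in[0,1]}$ with the thinning construction defining $(X_+^{n,(t)}(i))_{i,t}$. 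Your treatment of the ``consequence'' part (excursion lengths of $\Psi Y^{n,(t)}_+$ equal the cluster sizes of the percolated tree, via Proposition~\ref{pro:Prim-perc} and Lemma~\ref{lem:rel}, which evolve as $\FCP_+^{\textsc{ap}}$ up to the time change) is fine.
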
 

It is classical that a tree (or a forest) can be encoded by a  \L ukasiewicz walk. This walk encodes 
the sequence of node degrees  $(Y^{n,t}(j)=\sum_{k=1}^j (d_i^{(t)}-1))$. 
As a consequence, the sequence of sizes of the trees in $F_t$, sorted using the Prim order
correspond to the sequence of excursion sizes 
of $\Psi Y^{n,(t)}_+$, and this property is true as a process indexed by $t$.
This makes a connection between the results by \citet{AlPi1998a} and our representation of additive 
coalescent, and explains again the fact that the additive coalescent can be linearised.

\section{Prim's order and linear representations of coalescents}
\label{seq:POLR}

This part presents the main new idea underlying this work. 

\subsection{Prim's algorithm and coalescents}

In this section, we assume that an integer $n\geq 2$ is fixed.
Let $G=([n],E)$ be any connected graph, where the edges are marked by some weights,
$\bw=(w_e,e \in E)\in [0,1]^{\#E}$, some non-negative real numbers. 
The pair $(G,\bw)$ is said to be \emph{properly weighted} if the weights are distinct and 
positive. 

Prim's algorithm (or Prim--Jarn\'ik algorithm) is an algorithm which associates with 
any properly weighted graph $(G,\bw)$ its unique minimum spanning tree, the 
connected subgraph of $G$ that minimises the sum of the weights of its edges. 
It also defines a total order $\prec$ on the 
set of vertices.  Let us describe the nodes $u_1,\dots,u_n$ 
satisfying $u_1\prec u_2\prec \dots \prec u_n$. We will use below the notation $V_{i}$ 
for the set $\{u_1,\dots,u_i\}$.
 
First set $u_1=1$ and $V_1=\{u_1\}$. Assume that for some 
$1\leq i \leq n-1$, the nodes $u_1,\dots,u_i$ have been defined. Consider the set of weights 
$\{w_{\{a,b\}} ~|~a\in V_i, b\notin V_i\}$ of edges between a vertex of $V_i$ and another 
outside of $V_i$. Since all weights are distinct, the minimum is reached at a single pair 
$(a^\star,b^\star)\in V_i \times \complement V_i$. Set $u_{i+1}=b^\star$. 
This iterative procedure completely determines the \emph{Prim order} $\prec$. 
If one sets additionally $\pi_{i+1}=a^\star$, 
a classical result (not used in the paper) is that the minimum spanning tree is the 
tree on $[n]$ with set of edges $\{(\pi_i, u_i): 2\le i\le n\}$.
\begin{defi} We say that a set of nodes $\{v_1,\dots,v_\ell\}$ forms a \emph{Prim interval}, 
if $\{v_1,\dots,v_\ell\}=\{u_{i}, i \in \cro{a,a+\ell-1}\}$ for some $a$, that is if their 
\emph{Prim ranks} are consecutive. Any Prim interval can be written as $V_j\setminus V_i$ 
for some pair $(i,j)$. 
\end{defi} 

Given a properly weighted graph $(G,\bw)$, for any $t \in [0,1]$, $E_t(\bw)=\{e\in E: w_e\le t\}$ 
and $G_t(\bw)=([n],E_t)$ the graph whose edges are the edges of $E$ with weight at most $t$. 
The next proposition, which seems to be folklore in graph theory, is of prime importance to us. 
In the sequel we write $E_t$ and  $G_t$ for short, the weights being clear from the context.

\begin{pro}\label{pro:Prim-perc} Let $(G,\bw)$ be a properly weighted graph. For any $t\in[0,1]$, 
all the connected components of $G_t$ are Prim intervals. As a consequence, the coalescence of 
connected components arising when $t$ increases corresponds to coalescence of 
consecutive Prim intervals.
\end{pro}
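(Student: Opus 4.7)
The plan is to identify the components of $G_t$ directly from the trajectory of Prim's algorithm. Let $w_i^\star$ denote the weight of the edge Prim selects at step $i$, i.e.\ the minimum of the weights of edges between $V_i$ and $[n]\setminus V_i$, and let $J(t)=\{i\in\cro{1,n-1}: w_i^\star > t\}$. Writing $J(t)\cup\{n\}=\{j_1<\cdots<j_m\}$ (with $j_m=n$), I would prove that the connected components of $G_t$ are exactly the Prim intervals
\[
V_{j_1},\quad V_{j_2}\setminus V_{j_1},\quad \dots,\quad V_{j_m}\setminus V_{j_{m-1}}.
\]

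First I would handle the component containing $u_1=1$. For the inclusion into this component, note that for every $i<j_1$ the edge added at Prim step $i$ has weight $w_i^\star\le t$ and hence lies in $G_t$; these edges link every vertex of $V_{j_1}$ back to $u_1$. Conversely, since $w_{j_1}^\star>t$ is by definition the minimum weight among the cross-edges out of $V_{j_1}$, every edge between $V_{j_1}$ and its complement has weight strictly greater than $t$ and is therefore absent from $G_t$; so no vertex outside $V_{j_1}$ can be joined to $u_1$ in $G_t$.

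I would then iterate the same argument, and this is where the only real care is needed. Once we know all edges of $G$ leaving $V_{j_{k-1}}$ have weight exceeding $t$, the vertices $u_{j_{k-1}+1},\dots,u_{j_k}$ form a connected subgraph of $G_t$: indeed, for each $j_{k-1}\le i<j_k$ the minimum cross-edge from $V_i$ has weight $w_i^\star\le t$ and, by the previous observation, cannot originate from $V_{j_{k-1}}$, so it necessarily joins the ``active batch'' $\{u_{j_{k-1}+1},\dots,u_i\}$ to $u_{i+1}$ via an edge present in $G_t$. The same reasoning as in the first step (now applied to the edges leaving $V_{j_k}$, all of weight $>t$) shows that this batch is a full component. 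A straightforward induction on $k$ completes the structural description, and each block $V_{j_k}\setminus V_{j_{k-1}}$ is a Prim interval by construction. The main pitfall to avoid is conflating Prim's algorithm run on the whole graph with Prim restarted on the subgraph $[n]\setminus V_{j_{k-1}}$; these two orderings can genuinely differ, and the argument sidesteps the issue by always reasoning with Prim on the original graph.

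The ``consequence'' is then immediate. As $t$ grows, $J(t)$ shrinks, and because the weights $w_i^\star$ are distinct, the indices leave $J(t)$ one at a time. Removing a single index $j_k$ from $J(t)$ replaces the two consecutive Prim intervals $V_{j_k}\setminus V_{j_{k-1}}$ and $V_{j_{k+1}}\setminus V_{j_k}$ by the single Prim interval $V_{j_{k+1}}\setminus V_{j_{k-1}}$, so coalescence always amounts to merging two adjacent Prim intervals.
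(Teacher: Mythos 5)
Your proof is correct, but it takes a genuinely different route from the paper. You identify the components of $G_t$ explicitly: letting $w_i^\star$ be the weight of the Prim edge selected at step $i$ and $J(t)=\{i: w_i^\star>t\}$, the components are exactly the blocks of the Prim order delimited by the indices of $J(t)$, because (i) within a block the Prim edges all have weight $\le t$ and, since every edge leaving $V_{j_{k-1}}$ has weight $>t$, each such Prim edge must attach $u_{i+1}$ to the current block, and (ii) every edge leaving a block exits either $V_{j_{k-1}}$ or $V_{j_k}$ and hence has weight $>t$ by the minimality defining $w_{j_{k-1}}^\star$ and $w_{j_k}^\star$. The paper argues differently: it fixes the weighted graph, inducts over the successive times at which the number of components of $G_t$ drops, and shows by contradiction that the newly added edge cannot join two non-adjacent intervals, since its weight would then undercut all edges from the union of the lower intervals to the next interval, contradicting the fact that the lightest edge out of that union points into the next interval. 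Your fixed-$t$ structural argument buys more: an explicit description of the components (the blocks between the minimum-spanning-tree edges of weight $>t$), from which the ``consequence'' falls out immediately --- components change only at the values $w_i^\star$, and since these are distinct, exactly two adjacent blocks merge at each such time; the paper's induction is lighter on notation but yields only the interval property. One small slip to fix: in the iteration the index range should be $j_{k-1}<i<j_k$ rather than $j_{k-1}\le i<j_k$, since $w_{j_{k-1}}^\star>t$ by the definition of $J(t)$; this is harmless because connecting the batch only requires the Prim steps strictly between $j_{k-1}$ and $j_k$.
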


\begin{figure}[hb]
\centerline{\includegraphics[height=3.5cm]{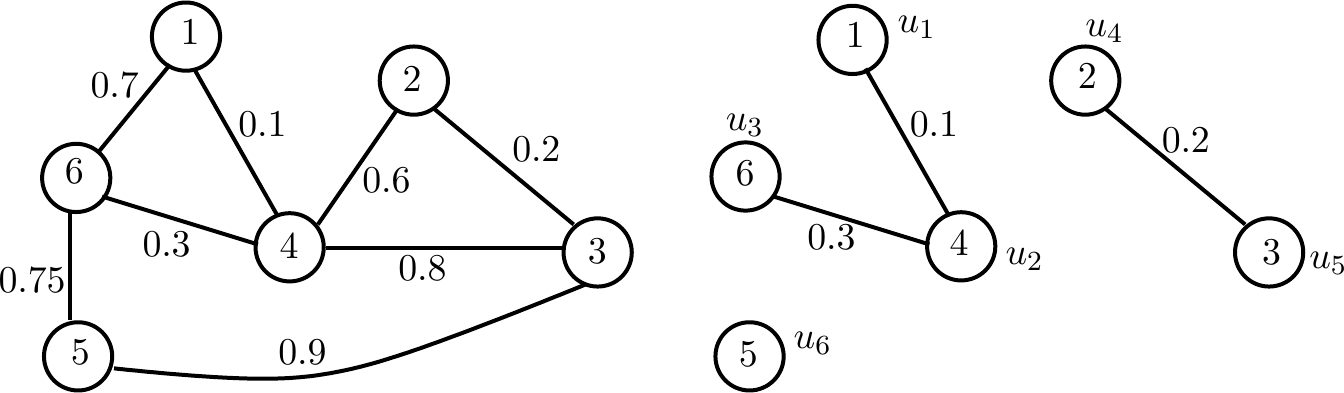}}
\caption{A realisation of the weighted complete graph $G$ on $[6]=\{1,2,\dots, 6\}$ where only the edges with 
weight at most $0.9$ are drawn. On the right, only the edges $e$ such that 
$w_e\leq 0.5$ are kept (this is $G_{0.5}$), and the $u_i$'s are the nodes sorted according 
to the Prim order. Notice that the connected components are intervals in the Prim order. 
In this example, $\cZ^{0.5}(1)=\{u_2\}, \cZ^{0.5}(2)=\{u_3\}, \cZ^{0.5}(3)=\varnothing, \cZ^{0.5}(4)=\{u_5\}, 
\cZ^{0.5}(5)=\varnothing, \cZ^{0.5}(6)=\varnothing$.}
\end{figure}

\begin{proof} Only the first statement needs to be proved. 
The graph $G_t$ is non-decreasing for $t\in[0,1]$, and since by hypothesis the weights are 
distinct and non-zero, we have $E_0=\varnothing$ and $E_1=E$. The finite set of weights/times 
$\{w_e, e\in E\}$ 
are the jumping times for the function $(G_t, 0\leq t \leq 1)$, and exactly $n-1$ of these 
dates $t_1,\dots,t_{n-1}$ modify the number of connected components. So the result 
needs only be checked at these times. 
For $t=t_0=0$ the result holds. Assume that, at time $t_k$ for some $0\leq  k \leq n-2$, 
all the connected components are consecutive intervals $(I_1,\dots,I_\ell)$ 
with $I_j=[a_j,b_j]$ and $a_{j+1}=b_j+1$. Denote by $e$ the edge which is added at time $t_{k+1}$. 
By hypothesis adding it decreases the number of connected components, and so its end points lie 
in two distinct intervals $I_{x}=[a_x,b_x]$ and $I_{y}=[a_y,b_y]$ for some $x<y$. 
If $y=x+1$ we are done, so assume for a contradiction that $y>x+1$. 
The weight $w_e$ is smaller than all those of the missing edges at time $t_{k+1}$; in particular, it is 
smaller than all the weights of the edges between $\cup_{i\leq x} I_i$ and $I_{x+1}$. 
But this is impossible, since it contradicts the fact that the vertices are sorted according 
to the Prim order: indeed, by definition, the extremity of the lightest edge out of 
$\cup_{i\leq x} I_i$ is $a_{x+1}\in I_{x+1}$.
\end{proof}

Denote by $\Neigh^G(v)=\{u~: \{u,v\}\in E\}$ the set of neighbours of $v$ in $G$. 
For a set of nodes $S$ let also $\Neigh^G(S)=\l(\bigcup_{v \in S} \Neigh^G(v) \r)\setminus S$, 
the set of neighbours of $S$ (out of $S$). 
Aldous \cite{aldous1997} study  of the multiplicative coalescent relies on an exploration 
of the graph and an encoding of the process $i\mapsto \#\Neigh^G(S_i)$, for an increasing collection
of sets $(S_i)_{1\le i\le n}$ that are built by a breadth-first search algorithm. 
The modified Algorithm~\ref{alg:aldous} uses the standard order on the nodes instead
of breadth-first search, but here we investigate the influence of the order on $[n]$, hereafter 
denoted by $<$, that is used in building the sets $(S_i)_{1\le i\le n}$. The exploration is as follows.

The first visited node is the smallest one $v_1$ for the order $<$. Assume we have visited 
$S_k=\{v_1,\dots, v_{k}\}$ at some time $1\leq k\leq n$. 
Then two cases arise:
\begin{itemize}
\item if $\Neigh^G(S_k)\neq \varnothing$, then $v_{k+1}$ is the smallest node for 
$<$ in $\Neigh^G(S_k)$, or
\item if $\Neigh^G(S_k)= \varnothing$, then $v_{k+1}$ is the smallest node for 
$<$ in $[n]\setminus S_k$.
\end{itemize}
In the exploration used by Aldous, the labels of the nodes $1,2,\dots,n$ are compared using the 
standard order $<$ on $\mathbb{N}$. 
The exploration clearly depends on the order $<$, and the notation should have reflected
this fact. For example, we could have written $v_{k}(<)$ and  $S_k(<)$ instead of $v_k$ and $S_k$. 
We will sometimes used further these enriched notation, and for $0\leq k \leq n+1$ we also use the 
more compact notation
\[Z^{g}_{<}(k)=\#\Neigh^{g}(S_k(<))\]
where by convention $Z^{g}_{<}(0)=Z^{g}_{<}(n+1)=0$.

The proof of the following lemma is immediate. For a graph $g=([n],E)$, denote by $\CC(g)$ 
the set of connected components of $g$, seen as a partition of $[n]$. 
\begin{lem}\label{lem:rel} Let $g=([n],E)$ be a graph (connected or not). For any total order 
$<$ on the set of nodes $[n]$,
\[\#\{k \in \{0,\dots,n\}~:Z^g_{<}(k)=0\}=\#\CC(g),\]
and the successive sizes of the connected components ordered by the exploration coincide 
with the distances between successive zeros in the sequence $(Z^g_{<}(k),0\leq k \leq n+1)$.
\end{lem}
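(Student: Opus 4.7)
The plan is to show that the exploration visits the connected components of $g$ \emph{one at a time}: once it enters a component $C$, it exhausts all vertices of $C$ before jumping to another component. Once that structural fact is established, both claims of the lemma follow by reading off the indices at which $Z^g_<(k)=0$.

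First I would describe the shape of the exploration. By induction on $k$ I would prove the following invariant: there exists an index $k^\star=k^\star(k)\le k$ such that $S_k\setminus S_{k^\star}$ is contained in a single connected component $C$ of $g$, and moreover $\{v_{k^\star+1},\dots,v_k\}$ is the vertex set of a connected subgraph of $C$ containing the first vertex of $C$ that was visited. The induction step uses the definition of the exploration: as long as $\Neigh^g(S_k)\neq\varnothing$, the next vertex $v_{k+1}$ is a neighbour of some $v_j$ with $j>k^\star$, hence lies in $C$ and extends the connected subgraph; if $\Neigh^g(S_k)=\varnothing$ the index $k^\star$ is reset to $k$, and $v_{k+1}$ is the smallest remaining vertex for $<$ (which belongs to a new component).

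Next I would characterise precisely the zeros of $Z^g_<$. Define $k_0=0$ and, inductively, $k_{j+1}=\min\{k>k_j : \Neigh^g(S_k)=\varnothing\}$. By the invariant, at each index $k_j$ the set $S_{k_j}$ is a disjoint union of complete connected components of $g$: the vertices $v_{k_{j-1}+1},\dots,v_{k_j}$ form exactly one connected component, because the exploration cannot jump away before exhausting it, and once exhausted $\Neigh^g$ becomes empty. The converse direction is the key point: if $\Neigh^g(S_k)=\varnothing$, then $k$ must be one of the $k_j$. Indeed, if $k_{j-1}<k<k_j$, the invariant says $\{v_{k_{j-1}+1},\dots,v_k\}$ sits inside a single component $C$ but is a strict subset of it; by connectedness of $C$ there is an edge from $S_k$ to $C\setminus S_k$, so $\Neigh^g(S_k)\neq\varnothing$, a contradiction.

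From this, both assertions follow immediately. The zeros of $(Z^g_<(k))_{0\le k\le n+1}$ are exactly $k_0,k_1,\dots,k_{\#\CC(g)}=n$ together with the conventional zero at $k=n+1$, so counting zeros in $\{0,\dots,n\}$ yields $\#\CC(g)$; and the successive gaps $k_j-k_{j-1}$ are by construction the sizes of the connected components, enumerated in the order in which the exploration meets them. The only delicate point is the invariant above (making sure that during one ``phase'' the exploration stays inside one component and that each phase really exhausts its component); everything else is bookkeeping.
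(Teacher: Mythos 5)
Your argument is correct and is precisely the ``immediate'' argument the paper has in mind (the lemma is stated there without proof): the invariant that the exploration exhausts one connected component before jumping to the next shows that $Z^g_<(k)=0$ for $k\ge 1$ exactly when $S_k$ is a union of complete components, and the gaps between successive zeros are then the component sizes in order of discovery. One bookkeeping caveat: your own enumeration $k_0=0,k_1,\dots,k_{\#\CC(g)}=n$ lists $\#\CC(g)+1$ zeros in $\{0,\dots,n\}$, so the stated count is obtained only if the conventional zero at $k=0$ is not counted (the same off-by-one convention is already implicit in the statement as printed, just as the final gap of length $1$ produced by the convention $Z^g_<(n+1)=0$ must be discarded), so nothing essential is affected.
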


In the following we will call \emph{Prim exploration} the exploration based on the Prim order $\prec$. 
Unlike the standard exploration, it is defined only on properly weighted graphs $(G,\bw)$.  

\subsection{Prim traversal versus standard traversal}

Take a random graph ${\bf G}=([n],E)$ whose edges are equipped with i.i.d.\ uniform [0,1] weights. 
Let us examine the similarities and the differences between the standard exploration 
(using the order $<$) and the Prim exploration of the random graph ${\bf G}_t$.
By Lemma \ref{lem:rel} the multiset of excursions lengths of $Z^{{\bf G}_t}_<$ and  $Z^{{\bf G}_t}_{\prec}$ are 
the same, but in general the paths $Z^{{\bf G}_t}_<$ and  $Z^{{\bf G}_t}_{\prec}$ 
do not have the same distribution. We already said that the distributions of the corresponding 
processes in $t$ were different (by Proposition \ref{pro:Prim-perc}), 
but this is also true for fixed $t$ (even if the distribution of ${\bf G}$ is invariant by 
random permutation of the node labels). 
The reason is that during the exploration, the Prim order favours  the nodes with a 
large indegree since the order is defined using the weights of the edges.
 Here is an example illustrating this. Consider  ${G}$ the graph with vertices $\{1,a_1,a_2,a_3\}$ and edges 
$(1,a_1), (1,a_2), (1,a_3), (a_2,a_3)$. Conditionally on ${\bf G}_t=G$, one sees that for the standard exploration, 
under a random labelling preserving $1$, one visits $1,a_2,a_3,a_1$ in that order with probability 
$1/6$. However, under the Prim exploration, it is easy to check that, among the 24 possible 
orderings of the weights $(w_e,e\in E)$,  six of them give this order so that the probability is 
$1/4$. 

There are however some special cases, including when ${\bf G}$ is a uniform Cayley tree or the 
complete graph for which the distributions of $Z^{{\bf G}_t}_<$ and  $Z^{{\bf G}_t}_{\prec}$ 
are the same. 

\begin{lem}\label{lem:red} Let $({\bf G},W)=(([n],E),W)$ be a rooted weighted random graph. 
Assume that, for any $k$ the distribution of $\#\Neigh^{{\bf G}_t}(S_{k+1}(\prec))$ knowing 
$(S_k,\Neigh^{{\bf G}_t}(S_k(\prec)))$ is
\begin{itemize}
\item independent of the weights $w_e$ on the edges between $S_k$ and $\Neigh^{{\bf G}_t}(S_k(\prec))$, 
\item the same as the distribution of $\#\Neigh^{{\bf G}_t}(S_{k+1}(\prec))$ 
knowing $(S_k,\#\Neigh^{{\bf G}_t}(S_k(\prec)))$, 
\end{itemize}
then  $Z^{{\bf G}_t}_\prec$ and  $Z^{{\bf G}_t}_{<}$ have the same distribution. 
\end{lem}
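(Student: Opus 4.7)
My plan is to establish the equality of laws by induction on $k$, showing that $(Z^{{\bf G}_t}_\prec(j))_{0\le j \le k}$ and $(Z^{{\bf G}_t}_<(j))_{0\le j \le k}$ share a common distribution; the case $k=0$ is trivial since $Z(0)=0$ by convention in both.

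For the inductive step, the starting point is that the two exploration rules pick $v_{k+1}$ from the \emph{same} candidate set: namely $\Neigh^{{\bf G}_t}(S_k)$ when it is non-empty, and $[n]\setminus S_k$ otherwise. They differ only in the selection rule within this candidate set (smallest label for $<$, endpoint of the lightest edge from $S_k$ for $\prec$). Condition (a) is tailored to this situation: since under $\prec$ the choice of $v_{k+1}$ is a deterministic function of the weights on the edges joining $S_k$ to $\Neigh^{{\bf G}_t}(S_k)$, the stated independence of $\#\Neigh^{{\bf G}_t}(S_{k+1}(\prec))$ from those weights is equivalent to saying that the conditional law of $\#\Neigh^{{\bf G}_t}(S_k\cup\{v_{k+1}\})$ given $(S_k,\Neigh^{{\bf G}_t}(S_k))$ does not depend on which element $v_{k+1}$ of $\Neigh^{{\bf G}_t}(S_k)$ is chosen. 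Applied to $v_{k+1}=\min \Neigh^{{\bf G}_t}(S_k)$ (the standard-exploration choice), this yields the identity of one-step conditional laws
\[
P\bigl(Z^{{\bf G}_t}_\prec(k+1)=z \,\bigm|\, S_k,\Neigh^{{\bf G}_t}(S_k)\bigr)
= P\bigl(Z^{{\bf G}_t}_<(k+1)=z \,\bigm|\, S_k,\Neigh^{{\bf G}_t}(S_k)\bigr),
\]
and condition (b) then collapses both sides into a function $K(S_k,Z(k),z)$ of the current state alone.

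To close the induction, I would strengthen the inductive statement so as to also assert that the conditional law of the pair $(S_k,\Neigh^{{\bf G}_t}(S_k))$ given $(Z(0),\ldots,Z(k))$ is the same under $\prec$ as under $<$; combined with the one-step identity above, the tower property then gives equality of the one-step conditional law of $Z(k+1)$ given the past, closing the induction. The main obstacle is precisely this strengthening: as the example immediately preceding the lemma shows, without the hypothesis the sets $S_k$ themselves can have very different laws under $\prec$ and under $<$. Propagating the strengthened induction hypothesis requires noting that the one-step kernel identified above treats all candidates $v_{k+1}\in\Neigh^{{\bf G}_t}(S_k)$ symmetrically, so that the conditional law of the updated pair $(S_{k+1},\Neigh^{{\bf G}_t}(S_{k+1}))$ given $(Z(0),\ldots,Z(k+1))$ inherits from $(S_k,\Neigh^{{\bf G}_t}(S_k))$ the required invariance; this is where the full force of conditions (a) and (b) is used iteratively, and where most of the technical work of the proof lies.
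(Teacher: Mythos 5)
Your first step (conditions (a)--(b) make the one-step law of the increment insensitive to which element of $\Neigh^{\bG_t}(S_k)$ is absorbed, and in particular allow the Prim choice to be replaced by the minimum-label choice) is the right mechanism, and it is exactly the device the paper uses, phrased there as: by (a) one may modify the weights on the edges between $S_k$ and $\Neigh^{\bG_t}(S_k)$ so that the lightest boundary edge points to the minimum-label neighbour, without changing the conditional law of $\#\Neigh^{\bG_t}(S_{k+1}(\prec))$. The problem is how you close the induction. The paper never needs any statement about the law of the visited sets: it couples the two $Z$-paths up to step $k$ (Skorokhod), and then the weight modification forces the Prim exploration to make \emph{the same move} as the standard one on the same graph, so $Z_\prec^{\bG_t}(k+1)=Z_<^{\bG_t}(k+1)$ pathwise in the modified (but equidistributed) version. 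You instead propose to strengthen the induction hypothesis to ``the conditional law of $(S_k,\Neigh^{\bG_t}(S_k))$ given $(Z(0),\dots,Z(k))$ is the same under $\prec$ and under $<$'', and you acknowledge, without carrying it out, that propagating this is where the work lies.

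That strengthening is a genuine gap, and not one that can be filled under the stated hypotheses: (a) and (b) only constrain the law of the \emph{cardinality} $\#\Neigh^{\bG_t}(S_{k+1})$, so the symmetry you invoke (``the kernel treats all candidates alike, hence the law of the updated \emph{pair} of sets is invariant'') simply does not follow -- the two rules pick different vertices by design ($<$ favours small labels, $\prec$ favours endpoints of light edges). In fact the strengthened statement can be false while the lemma's hypotheses and conclusion both hold: take $\bG$ a cycle on $[n]$ with i.i.d.\ uniform weights. Given $(S_k,\Neigh^{\bG_t}(S_k))$, the increment is $\#\Neigh^{\bG_t}(S_k)-1$ plus an independent Bernoulli$(t)$ (away from closure), independent of the boundary weights and a function of $\#\Neigh^{\bG_t}(S_k)$ only, so (a)--(b) hold and indeed $Z_\prec^{\bG_t}\eqd Z_<^{\bG_t}$; yet given $\Neigh^{\bG_t}(S_1)=\{2,n\}$ the standard exploration has $S_2=\{1,2\}$ almost surely while the Prim exploration has $S_2=\{1,2\}$ or $\{1,n\}$ with probability $1/2$ each, so the conditional laws of $(S_2,\Neigh^{\bG_t}(S_2))$ given the $Z$-past differ. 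The same issue already infects your displayed ``one-step identity'': it equates a conditional law given $(S_k(\prec),\Neigh^{\bG_t}(S_k(\prec)))$ with one given $(S_k(<),\Neigh^{\bG_t}(S_k(<)))$, i.e.\ under two different conditionings coming from two different explorations, and nothing in (a)--(b) licenses that comparison. The paper's coupling-plus-reweighting argument is precisely what removes the need for any such cross-exploration identification, and your proof needs to be reorganised along those lines (or some substitute for it) rather than via the strengthened induction hypothesis.
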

\begin{proof}
We prove by induction on $k\ge 0$ that under the conditions of the lemma, for any $t\in [0,1]$,
$(Z_\prec^{\bG_t}(i))_{0\le i\le k}$ and $(Z_<^{\bG_t}(i))_{0\le i\le k}$ have the same distribution. 
The base case $k=0$ is clear. Suppose now that this holds up to some integer $k$. Then, by
Skorokhod's representation theorem, we can find a coupling for which $(Z_\prec^{\bG_t}(i))_{0\le i\le k}$ 
and $(Z_<^{\bG_t}(i))_{0\le i\le k}$ are a.s.\ the same. Now, the distribution of $\#\Neigh(S_{k+1})$ 
conditional on $(S_k,\#\Neigh(S_k))$ is the same as the one conditionally on $(S_k, \Neigh(S_k))$, for both orders. 
Furthermore, since this distribution is independent of the weights between $S_k$ and $\Neigh(S_k)$, it is not 
affected when modifying them in such a way that the end point of the lightest edge is the node of 
minimum label in $\Neigh(S_k)$. But in this modified version, we then have $Z_\prec^{\bG_t}(k+1)=Z_<^{\bG_t}(k+1)$
with probability one, so that $(Z_\prec^{\bG_t}(i))_{0\le i\le k+1}$ and $(Z_<^{\bG_t}(i))_{0\le i\le k+1}$,
which completes the proof of the induction step.
\end{proof}

\begin{lem}\label{lem:prpr} The following models both satisfy the  hypotheses (and then the conclusion) 
of Lemma~\ref{lem:red}:\\
$(a)$ The complete graph ${\bf G}=K_n$ on $[n]$ whose edges are weighted with i.i.d.\ uniforms on $[0,1]$.\\
$(b)$ ${\bf G}$ a uniform Cayley tree on $n$ nodes whose edges are equipped with i.i.d.\ weights
uniform on $[0,1]$.  
\end{lem}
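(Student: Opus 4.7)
The plan is to verify, for each of the two models, the two hypotheses of Lemma~\ref{lem:red}. I write $A=S_k(\prec)$ (viewed as an ordered sequence), $B=\Neigh^{{\bf G}_t}(S_k(\prec))$ and $C=[n]\setminus(A\cup B)$. Since the Prim step adds the endpoint $u_{k+1}$ of the lightest edge leaving $V_k=A$, this endpoint belongs to $B$ whenever $B\neq\varnothing$ (in the case $B=\varnothing$, the first hypothesis is trivial since there are no $A$--$B$ weights); and one has
\[\Neigh^{{\bf G}_t}(S_{k+1}(\prec))=(B\setminus\{u_{k+1}\})\cup N\]
for some $N\subseteq C$. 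Writing $\cW_{AB}$ for the vector of weights on edges from $A$ to $B$, the two hypotheses reduce to showing: $(i)$ conditionally on $(A,B)$, the random variable $\#N$ is independent of $\cW_{AB}$; and $(ii)$ its conditional law depends on $B$ only through $|B|$.

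Case $(a)$ is essentially immediate. Here $N=\{c\in C:\ W_{\{u_{k+1},c\}}\le t\}$, and the weights $(W_{\{u_{k+1},c\}})_{c\in C}$ sit on edges disjoint from those incident to $A$; in $K_n$ they are therefore independent of $\cW_{AB}$ and of every edge whose weight is constrained by the event $\{S_k=A,\Neigh^{{\bf G}_t}(A)=B\}$ (which only constrains weights on edges incident to $A$). They thus remain i.i.d.\ uniform on $[0,1]$ conditionally on $(A,B,\cW_{AB})$, so that $\#N\sim\binomial(|C|,t)$ with $|C|=n-k-|B|$, which gives both $(i)$ and $(ii)$.

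For case $(b)$, a short acyclicity argument using that $V_k=A$ is a connected subtree and that $u_{k+1}$ is joined to $A$ by a single tree edge shows that no tree-neighbour of $u_{k+1}$ outside $A$ can lie in $B$; the decomposition above is therefore a disjoint union and $\#N$ counts the tree-neighbours of $u_{k+1}$ in $C$ carrying edge-weight $\le t$. The backbone of the proof is then label-exchangeability: the joint law of $(T,W)$ is invariant under any permutation $\sigma$ of $[n]$ that fixes $A$ pointwise, since a uniform Cayley tree is label-exchangeable and the weights are i.i.d.\ on its edges. Such a $\sigma$ leaves the first $k$ Prim steps unchanged, sends $\{\Neigh^{{\bf G}_t}(A)=B\}$ to $\{\Neigh^{{\bf G}_t}(A)=\sigma(B)\}$, maps $u_{k+1}$ to $\sigma(u_{k+1})$, and preserves the integer $\#\Neigh^{{\bf G}_t}(S_{k+1})$. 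Letting $\sigma$ run over bijections between equal-cardinality subsets $B,B'\subseteq[n]\setminus A$ yields $(ii)$; letting $\sigma$ permute $B$ shows simultaneously that $u_{k+1}$ is uniform on $B$ given $(A,B)$ and that the conditional law of $\#N$ given $\{u_{k+1}=b\}$ is the same for each $b\in B$, i.e.\ $\#N\perp u_{k+1}\mid(A,B)$.

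The main technical step, and the only piece not reducible to a symmetry argument, is upgrading ``independence from $u_{k+1}$'' to ``independence from $\cW_{AB}$'' as required by $(i)$. My plan for this is: conditionally on $(A,B,u_{k+1})$, the vector $\cW_{AB}$ is distributed as $|B|$ i.i.d.\ uniforms on $[0,t]$ conditioned to have their minimum at the entry corresponding to $u_{k+1}$, a conditional law that depends only on $|B|$ and $t$ and carries no further information about $T$ beyond the sub-tree spanned by $A$ and its tree-edges to $B$. Combined with the independence of $T$ and $W$ at the outset and the i.i.d.\ nature of the weights, this gives that $\cW_{AB}$ is conditionally independent, given $(A,B,u_{k+1})$, of the sub-tree hanging off $u_{k+1}$ and of the weights on its edges. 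Since $\#N$ is a functional of precisely these two objects, we obtain $\#N\perp\cW_{AB}\mid(A,B,u_{k+1})$; and since $u_{k+1}$ is itself a deterministic function of $(A,B,\cW_{AB})$, this combines with the conclusion of the previous paragraph to give $\#N\perp\cW_{AB}\mid(A,B)$, which is $(i)$.
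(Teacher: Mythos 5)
Your reduction (identifying the next explored vertex with the Prim vertex $u_{k+1}\in B$, writing $\Neigh^{{\bf G}_t}(S_{k+1})=(B\setminus\{u_{k+1}\})\cup N$ with $N\subseteq C$, and recasting the two hypotheses as statements about $\#N$) is sound, case $(a)$ is correct and is essentially the paper's argument, and in case $(b)$ your label-exchangeability argument for the second hypothesis, together with the conclusions that $u_{k+1}$ is uniform on $B$ and that $\#N$ is conditionally independent of $u_{k+1}$ given $(A,B)$, is a clean variant of the paper's ``exchangeable subtrees'' argument. The problem is the step you yourself flag as the main technical one. The claim that, conditionally on $(A,B,u_{k+1})$, the vector $\cW_{AB}$ is $|B|$ i.i.d.\ uniforms on $[0,t]$ conditioned to have their minimum at the $u_{k+1}$-entry is false for $k\ge 2$: the event $\{S_k=A\}$ is a Prim event, and it forces every $A$--$B$ weight to exceed the weights of all internally chosen edges added after its attachment vertex entered $A$. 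Thus $\cW_{AB}$ is tilted and correlated, through these comparisons, with the weights already revealed inside $A$, and hence with the \emph{shape} of the star of $A$ (how many tree-neighbours $A$ has in total and to which vertices of $A$ they attach) --- a quantity which is not determined by $(A,B)$ and which does govern the conditional law of the subtree hanging at $u_{k+1}$, hence of $\#N$. So the assertion that $\cW_{AB}$ ``carries no further information about $T$'' is precisely what needs proof, and your justification of it does not hold.

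Worse, at the level of conditioning you use, the conclusion $(i)$ itself fails, so the step cannot be repaired by merely correcting the formula for the conditional law. Take $n=4$, $t\in(0,1)$, and condition on $S_2=\{1,2\}$ (with $u_2=2$) and $\Neigh^{{\bf G}_t}(S_2)=\{3\}$; let $x\le t$ be the single $A$--$B$ weight. Summing over the eight trees containing the edge $\{1,2\}$, the conditional weight of the configurations in which vertex $4$ hangs off $3$ is proportional to $1+x$, while that of the configurations in which $4$ is attached to $\{1,2\}$ (with weight $>t$) is proportional to $(1-t)(1+2x)+(1-t^2)/2$; consequently $\p{\#\Neigh^{{\bf G}_t}(S_3)=1\mid S_2,\Neigh^{{\bf G}_t}(S_2),x}=t(1+x)/\bigl((1+x)+(1-t)(1+2x)+(1-t^2)/2\bigr)$, which genuinely varies with $x$. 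The dependence disappears only when one conditions on finer information --- e.g.\ on the whole exploration history, or on the full star of $A$ (all tree-edges incident to $A$ together with their weights): given that star, the unexplored part of the tree is a uniform forest rooted at the tree-neighbours of $A$, independent of all the star weights, and the weights on unexplored edges are fresh i.i.d.\ uniforms; the two hypotheses of Lemma~\ref{lem:red} (in the correspondingly strengthened form, which is also what the induction in that lemma actually requires) then follow by the exchangeability of the hanging subtrees. This is, in substance, the paper's route; your attempt to shortcut it by an explicit description of the law of $\cW_{AB}$ given only $(A,B,u_{k+1})$ is where the proof breaks.
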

Case $(b)$ can easily be extended to Galton--Watson trees conditioned to have size $n$, but 
we omit the details (see the proof below). 

\begin{proof} In the entire proof, there is no risk of confusion and we write $S_k$ instead of $S_k(\prec)$, 
and drop the superscript referring to the graph we are working on.

(a) The case of the complete graph is straightforward: whatever the node $v_{k+1}$, 
the distribution of $\#\Neigh(S_{k+1})$ given $(S_k, \Neigh(S_k)$ is always the same and is that of 
\[\#\Neigh(S_k-\I{\#\Neigh(S_k)>0} + {\sf Bin}\big(n-k-\I{\#\Neigh(S_k)>0}-\#\Neigh(S_k), t\big).\]
In particular, it is independent of the weights on the edges between $S_k$ and $\Neigh(S_k)$. 
Also, this distribution is the same conditionally on $(S_k, \Neigh(S_k))$.

(b) The case of the Cayley tree is based on the invariances of the distribution of the tree. 
Observe first that the percolated tree $\bG_t$ is distributed as a forest of Cayley trees, which
may each be seen as rooted at the node of smallest label.
Now, condition on $(S_k, \#\Neigh(S_k))$. If $\#\Neigh(S_k)=0$, then the claim clearly holds. 
Otherwise, the distribution of $\#\Neigh(S_{k+1})$ is that of 
\[\#\Neigh(S_k)-1 + {\sf Bin}(D_{k+1}, t),\]
where $D_{k+1}+1$ denotes the degree of $v_{k+1}$, the next node to be visited.
However, the subtrees rooted at $\Neigh(S_k)$ are exchangeable, and since the weights are 
independent of the tree, we see that the distribution of $D_{k+1}$ is independent of the 
weights between $S_k$ and $\Neigh(S_k)$. 
Furthermore, the distribution conditionally on $S_k$ and $\Neigh(S_k)$ is unchanged since 
we may still permute the sets of children of the nodes in $\Neigh(S_k)$ without altering 
the conditional distribution. 
\end{proof} 

\section{Encoding the additive coalescent: Proof of Theorem~\ref{theo:main+}}\label{sec:CVFDDyplus}

\subsection{Finite-dimensional distributions}

We start with the proof of the convergence of the finite-dimensional distributions (fdd).
\begin{lem}\label{lem:fdd+}
For any integers $k\ge 1$ and $\ell\ge 1$, and for any $s_1,s_2, \dots, s_k\in [0,1]$ and 
$\lambda_1,\lambda_2, \dots, \lambda_\ell \geq 0$, we have 
\[\big(\sy^{n,(\lambda_j)}_+(s_i)\big)_{1\le i\le k, 1\le j\le \ell} 
\dd 
\big(\sy^{(\lambda_j)}_+(s_i)\big)_{1\le i\le k, 1\le j\le \ell}.
\]
\end{lem}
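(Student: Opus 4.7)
\emph{Plan of proof.}
The starting point is a direct coupling identity that isolates the dependence on~$\lambda$. Writing $t=1-\lambda/\sqrt n$ in the definition of $X_+^{n,(t)}(j)$ and summing, one gets
\[
Y_+^{n,(1-\lambda/\sqrt n)}(m) \;=\; Y_+^{n,(1)}(m)\;-\;R^n_m(\lambda),
\qquad
R^n_m(\lambda)\;:=\;\sum_{j=1}^{m}\sum_{\ell=1}^{X_+^{n}(j)} \mathbf 1_{U_j(\ell)>1-\lambda/\sqrt n},
\]
so that, after scaling,
\[
\sy_+^{n,(\lambda)}(x)\;=\;\sy_+^{n,(0)}(x)\;-\;\frac{R^n_{\lfloor nx\rfloor}(\lambda)}{\sqrt n}.
\]
The first term on the right-hand side does not depend on $\lambda$, and by \eqref{eq:yxn} (the Donsker-type invariance principle for the Łukasiewicz walk of a uniform Cayley tree), the joint law $\bigl(\sy_+^{n,(0)}(s_i)\bigr)_{1\le i\le k}$ converges in distribution to $\bigl(\se(s_i)\bigr)_{1\le i\le k}$.

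The plan is then to show that, jointly over $i,j$,
\[
\frac{R^n_{\lfloor ns_i\rfloor}(\lambda_j)}{\sqrt n}\;\build{\longrightarrow}{n\to\infty}{(\mathbb P)}\;\lambda_j s_i.
\]
Conditionally on the walk $Y_+^{n,(1)}$, and using that the array $(U_j(\ell))$ is independent of $X_+^n$, the random variable $R^n_{\lfloor ns_i\rfloor}(\lambda_j)$ is a sum of
\[
M_n(s_i)\;:=\;\sum_{j=1}^{\lfloor ns_i\rfloor} X_+^n(j)\;=\;Y_+^{n,(1)}(\lfloor ns_i\rfloor)+\lfloor ns_i\rfloor
\]
independent Bernoulli variables with parameter $\lambda_j/\sqrt n$. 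Two simple computations handle the rest:
\begin{itemize}
\item[(a)] From \eqref{eq:yxn}, $Y_+^{n,(1)}(\lfloor ns_i\rfloor)=O_{\mathbb P}(\sqrt n)$, whence $M_n(s_i)/n\to s_i$ in probability.
\item[(b)] Conditionally on $X_+^n$, the mean of $R^n_{\lfloor ns_i\rfloor}(\lambda_j)/\sqrt n$ equals $M_n(s_i)\lambda_j/n$, which by (a) converges in probability to $\lambda_j s_i$; the conditional variance is at most $M_n(s_i)\lambda_j/(n\sqrt n)=O_{\mathbb P}(1/\sqrt n)$, so by conditional Chebyshev the centred error is $o_{\mathbb P}(1)$.
\end{itemize}
Both (a) and (b) hold simultaneously for the finitely many pairs $(s_i,\lambda_j)$ since the $\lambda_j$'s share the same array $(U_j(\ell))$ and the same walk; in particular, the monotonicity in $\lambda$ of $R^n_m(\cdot)$ makes the joint-in-$\lambda$ control immediate.

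To conclude, since the limit of $R^n_{\lfloor ns_i\rfloor}(\lambda_j)/\sqrt n$ is the deterministic vector $(\lambda_j s_i)$, Slutsky's lemma combined with the weak convergence of $\bigl(\sy_+^{n,(0)}(s_i)\bigr)_i$ to $(\se(s_i))_i$ yields the announced convergence
\[
\bigl(\sy_+^{n,(\lambda_j)}(s_i)\bigr)_{i,j}\;\build{\longrightarrow}{n}{(d)}\;\bigl(\se(s_i)-\lambda_j s_i\bigr)_{i,j}\;=\;\bigl(\sy_+^{(\lambda_j)}(s_i)\bigr)_{i,j}.
\]
The only mildly delicate point is ensuring that the conditioning on $\tau_{-1}=n$ (which enters only through the law of $X_+^n$, not through the independent uniforms) does not break the moment estimate in (b); this is handled by the crude bound $M_n(s_i)\le n$ which suffices for the variance computation.
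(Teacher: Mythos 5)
Your proof is correct and follows essentially the same route as the paper's: condition on the walk $X_+^n$, isolate the $\lambda$-dependence as a Bernoulli-thinned sum over the independent uniforms, control it through a conditional first/second moment (Chebyshev) computation using $\sum_{j\le \lfloor ns\rfloor}X_+^n(j)=\lfloor ns\rfloor+\sqrt{n}\,\sy_+^{n,(0)}(s)$, and combine with the invariance principle \eref{eq:yxn}. The only differences are cosmetic: you conclude via Slutsky where the paper uses a Skorokhod coupling, and you silently drop the interpolation error between $nx$ and $\lfloor nx\rfloor$, which the paper absorbs into the term $\varepsilon_{n,s,\lambda}$ and which is indeed negligible.
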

We start with two bounds that will be used all along the proof. First, for any $`e>0$,
\ben\label{eq:conv1}
`P\l(\max_{1\leq i \leq n} X^{n,(1)}_+(i)\geq n^{`e}\r)=O\l( \exp\l(-n^{`e/2}\r)\r),
\een
as $n\to\infty$. To see this, observe that $X^{n,(1)}_+(i)$ are Poisson$(1)$ random variables 
conditioned to satisfy $\tau_{-1}=n$, and we have $`P(\tau_{-1}=n) \sim c n^{-3/2}$ as $n\to\infty$.  
So the conditioning may be removed at the expense of a factor 
$(cn^{-3/2})^{-1}$, the union bound brings another factor $n$, and then 
$`P({\sf Poisson}(1)\geq n^{`e})\leq \min\{ e^{-1+e^{s}-s n^{`e}}~:s>0\}$ by standard 
Chernoff's bounding method. The case $\lambda=1$ provides the bound in \eqref{eq:conv1}. 
Furthermore, as a consequence of the weak convergence in $\bbC([0,1],\bbR)$ stated in \eref{eq:yxn},
we have 
\ben\label{eq:conv2}
\big\|\sy^{n,(0)}_+\big\|_\infty \to \big\|\sy^{(0)}_+\big\|_\infty,
\een
and $\|\sy^{(0)}_+\|_\infty<\infty$ with probability one. 

\begin{proof}[Proof of Lemma~\ref{lem:fdd+}]
By the Skorokhod representation theorem, there exists a probability space on which the convergence stated in 
\eref{eq:yxn} is almost sure. In the following, we work on this space, and keep the same notation 
for the version of $\sy^{n,(0)}_+$ which converges a.s., and still denote the discrete increments by 
$(X_+^n(i),1\leq i \leq n)$.  Conditionally on $(X_+^n(i),1\leq i \leq n)$, we have
\begin{equation}\label{eq:reps}
\sy^{n,(\lambda)}_+(s)=n^{-1/2}\sum_{m=1}^{\floor{ns}} 
\Bigg(-1+\sum_{\ell=1}^{X^n_+(m)} \I{U_m(\ell)\leq 1-\lambda n^{-1/2}}\Bigg)+`e_{n,s,\lambda},
\end{equation}
where $`e_{n,s,\lambda}$ is the term coming from the fact that $ns$ is not an integer, so that the sum 
misses a contribution corresponding to the portion between $\floor{ns}$ and $ns$. 
We have the simple bound
\ben\label{eq:dns}
\sup_{0\le s\le 1,\lambda \in \bbR}|`e_{n,s,\lambda}|\leq n^{-1/2}\max_{1\le i\le n} |X^n_+(i)-1|.
\een
Note that, by \eref{eq:conv1}, the bound in \eqref{eq:dns} goes to 0 in probability. 
Using the fact that $\sum_{m=1}^{\floor{ns}}X^n_+(m)=\floor{ns}+\sqrt{n}\sy^{n,(0)}_+(s)$, 
we may rewrite $\sy^{n,(\lambda)}_+(s)$ as:
\begin{align}\label{eq:qfsq}
\sy^{n,(\lambda)}_+(s)
&=n^{-1/2}\sum_{m=1}^{\floor{ns}} \sum_{\ell=1}^{X_+^n(m)} 
\l(\I{U_m(\ell)\leq 1-\lambda n^{-1/2}}-(1-\lambda n^{-1/2})\r) \\
\nonumber   
& \quad -n^{-1/2}\floor{ns}+(1-{\lambda}n^{-1/2}) n^{-1/2}\l(\floor{ns}+\sqrt{n}\sy^{n,(0)}_+(s)\r)\\
\nonumber   & \quad +`e_{n,s,\lambda}.
\end{align}
Now, for $\lambda\in \bbR$ fixed, the first term goes to 0 in probability (conditionally on the $X_i$). 
To see this, observe that the number of terms in the sum is $\floor{ns}+\sqrt{n}\sy^{n,(0)}_+(s)$, 
and that the terms $\I{U_i(\ell)\leq 1-\lambda n^{-1/2}}-(1-\lambda n^{-1/2})$ are independent and each have 
variance bounded by $\lambda n^{-1/2}$. So the total variance of this first term is $O(n^{-1/2})$. 
Since $\| \sy^{n,(0)}_+\|_\infty$ converges a.s.\ on the probability space we are working on, 
and that each term is centred, Chebyshev's inequality guarantees that we have convergence to 
zero in probability. 

The term in the second line of the right-hand side of \eqref{eq:qfsq} is  
\ben\label{eq:qdsq}
(1-{\lambda}n^{-1/2})  \sy^{n,(0)}_+(s) 
-\lambda \frac{\floor{ns}}{n}  & \dd & \sy^{(0)}_{+}(s) -s\lambda . 
\een
Finally, we see that on the probability space we are working on for any $s,\lambda$,
\[ \sy ^{n,(\lambda)}_+(s) \proba \sy_+^{(0)}(s)-\lambda s.\] 
This completes the proof of convergence of the fdd. 
\end{proof}

\subsection{Tightness of the sequence $(\sy_+^n)_{n\ge 1}$} 
\label{sec:tightnesssyplus}

It suffices to show that the sequence $(\lambda\mapsto \sy^{n, (\lambda)}_+)_{n\ge 1}$ 
is tight on $\bbD([0,a], \bbC([0,1],`R))$ for each $a\geq 0$. So fix $a>0$. 
According to Kallenberg \cite[Theorem 14.10]{Kal}, considering the modulus of continuity
\[\omega_{\delta}(\sy^{n}_+):=
\sup\l\{ \|\sy^{n,(\lambda_1)}_+-\sy_+^{n,(\lambda_2)} \|_\infty~: |\lambda_1-\lambda_2|\leq \delta, 
0\leq \lambda_1,\lambda_2 \leq a\r\},\]
it suffices to show that for any $`e,`e'>0$, there exists $\delta>0$ such that for all $n$ large enough,
$`P(\omega_{\delta}(\sy^{n}_+)\geq `e) \leq `e'$.
Observe that this modulus of continuity is a priori not adapted to convergence in a space of cadlag
functions, but we take advantage of the continuity of the limit, 
and simply show that the convergence is uniform on $[0,a]\times[0,1]$. 
So fix $`e,`e'>0$. Considering again \eref{eq:conv1} and \eref{eq:conv2}, 
there exist constants $b\in (0,1/4)$ and $b'>0$ such that the following event
\[B_n:=\l\{\max_{1\leq i \leq n} X^{n,(1)}_+(i)\leq n^{b}, \big\|\sy^{n,(0)}_+\big\|_\infty \leq b'\r\}\]
has probability $1-`e'/10$ for all $n$ large enough. 

From \eref{eq:qfsq} and the discussion just below it, taking $\lambda_1 < \lambda_2$, we have
\begin{align}\label{eq:qfsq2}
\sy^{n,(\lambda_1)}_+(s)-\sy_+^{n,(\lambda_2)}(s)
& =n^{-1/2}\sum_{m=1}^{\floor{ns}} \sum_{\ell=1}^{X_+^n(m)} 
\l(\I{\lambda_1 n^{-1/2} <1-U_m(\ell)\leq \lambda_2 n^{-1/2}}-\frac{\lambda_2-\lambda_1}{n^{1/2}}\r) \\
\nonumber   &\qquad + s(-\lambda_1+\lambda_2)  
+ O\bigg( \frac{\|y^{n,(0)}\|_\infty}{n^{1/2}}+\frac{1}{n} +\frac{\max_i |X^n_+(i)-1|}{n^{1/2}}\bigg),
\end{align}
where the $O(\,\cdot\,)$ term above is uniform on $0\leq \lambda_1,\lambda_2\leq a, 0\leq s\leq 1$.  
We thus have $
w_{\delta}(\sy^{n})\leq w_{\delta}({\sf v}^{(n)})+ O(s(\lambda_2-\lambda_1)),$
where 
\[{\sf v}^{n,(\lambda)}(s)=n^{-1/2}\sum_{m=1}^{\floor{ns}} \sum_{\ell=1}^{X_+^n(m)} 
\l(\I{U_m(\ell)\leq 1-\lambda n^{-1/2}}-(1-\lambda n^{-1/2})\r) .\]
On $B_n$, the $O(\,\cdot\,)$ term in \eqref{eq:qfsq2} goes to zero in probability. 
Furthermore, since $`P(\complement B)<`e'/10$, in order to complete the proof, it suffices to show
the following lemma:
\begin{lem}For every $`e,`e'>0$ there exists $\delta>0$ such that for every $n$ large enough
\[`P(\omega_{\delta}({\sf v}^{n})\geq `e/2) \leq `e'/2.\]
\end{lem}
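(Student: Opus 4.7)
The plan is to exploit a decomposition of ${\sf v}^{n,(\lambda)}$ into two pieces, each monotone in $\lambda$ for every fixed $s$. Write ${\sf v}^{n,(\lambda)}(s) = A^{n,(\lambda)}(s) - B^{n,(\lambda)}(s)$, where
\[
A^{n,(\lambda)}(s) := n^{-1/2}\sum_{m=1}^{\floor{ns}} \sum_{\ell=1}^{X_+^n(m)} \I{1 - U_m(\ell)\leq \lambda n^{-1/2}},
\qquad
B^{n,(\lambda)}(s) := \frac{\lambda}{n}\sum_{m=1}^{\floor{ns}} X^n_+(m).
\]
Both $\lambda\mapsto A^{n,(\lambda)}(s)$ and $\lambda\mapsto B^{n,(\lambda)}(s)$ are non-decreasing, and, conditionally on $(X^n_+(m))_{1\le m\le n}$, the process $s\mapsto A^{n,(\lambda)}(s) - B^{n,(\lambda)}(s)$ is a centred martingale in the variable $s$, for each fixed $\lambda$.

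Next, introduce a $\delta$-grid $\lambda_k = k\delta$, $k=0,\dots,K$ with $K = \lceil a/\delta\rceil + 1$. Since any two points of $[0,a]$ at distance at most $\delta$ belong to some common interval $[\lambda_k,\lambda_{k+2}]$, the separate monotonicity of $A$ and $B$ gives, for any $s\in[0,1]$ and any $\lambda,\lambda'\in [\lambda_k, \lambda_{k+2}]$,
\[
|{\sf v}^{n,(\lambda)}(s) - {\sf v}^{n,(\lambda')}(s)|
\leq 2\bigl(A^{n,(\lambda_{k+2})}(s) - A^{n,(\lambda_k)}(s)\bigr)
+ 2\bigl(B^{n,(\lambda_{k+2})}(s) - B^{n,(\lambda_k)}(s)\bigr).
\]
Setting $M^{n,k}(s) := (A-B)^{n,(\lambda_{k+2})}(s) - (A-B)^{n,(\lambda_{k})}(s)$, a martingale in $s$, I obtain
\[
\omega_\delta({\sf v}^{n})
\leq 2\max_{k}\sup_{s}|M^{n,k}(s)| + 4\max_k\sup_{s}\bigl(B^{n,(\lambda_{k+2})}(s) - B^{n,(\lambda_k)}(s)\bigr).
\]
The drift contribution is harmless because the conditioning $\tau_{-1}=n$ forces $\sum_{m=1}^{n} X^n_+(m) = n-1$ deterministically, so $B^{n,(\lambda_{k+2})}(s) - B^{n,(\lambda_k)}(s)\le 2\delta(n-1)/n \le 2\delta$ uniformly. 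Choosing $\delta$ small enough makes this contribution to $\omega_\delta$ less than $`e/4$.

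It remains to control the martingale contribution. Doob's $L^2$-maximal inequality gives
\[
\E{\sup_{s}|M^{n,k}(s)|^2 \,\Big|\, (X^n_+(m))_m}
\leq 4\,\E{|M^{n,k}(1)|^2 \,\Big|\, (X^n_+(m))_m}
\leq 4\cdot \frac{1}{n}\sum_{m=1}^{n} X^n_+(m)\cdot \frac{2\delta}{\sqrt{n}}
\leq \frac{8\delta}{\sqrt{n}},
\]
using orthogonality of martingale increments and the bound $\mathrm{Var}(\I{\lambda_k/\sqrt n < 1-U \le \lambda_{k+2}/\sqrt n})\le 2\delta/\sqrt n$. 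A Chebyshev step followed by a union bound over the $K=O(1/\delta)$ values of $k$ then yields
\[
\p{\max_k \sup_{s}|M^{n,k}(s)| \geq `e/8} \leq K\cdot\frac{64}{`e^2}\cdot\frac{8\delta}{\sqrt n} = O\!\left(\frac{a}{`e^2\sqrt{n}}\right),
\]
which tends to $0$, and in particular is smaller than $`e'/2$ for $n$ large. Combining the two contributions proves $\p{\omega_\delta({\sf v}^{n})\ge `e/2}\le `e'/2$, as required.

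The main obstacle is the need for a modulus of continuity in $\lambda$ that is uniform in $s$: a naive fixed-$s$ variance bound combined with an arbitrary partition of $[0,1]$ would waste the $1/\sqrt{n}$ factor obtained from the variance of the Bernoulli indicators. The resolution, already visible above, is to exploit the monotonicity of $A$ and $B$ in $\lambda$ to reduce the uniform oscillation to finitely many grid increments, and to handle each such increment by a single application of Doob's inequality to the $s$-martingale — thus trading a two-dimensional modulus for a finite sup of one-dimensional martingale maxima.
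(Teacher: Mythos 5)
Your proof is correct, but it follows a genuinely different route from the paper's. The paper discretises $\lambda$ with the $n$-dependent mesh $a/\sqrt{n}$, controls the oscillation between consecutive grid values by counting uniforms falling in windows of width $a/n$ (Bernstein's inequality plus a union bound over $n$ blocks), and then builds a two-parameter continuous interpolation $\sw^n$ whose tightness in $\mathbb{C}([0,a]\times[0,1],\mathbb{R})$ is obtained from a Kolmogorov-type moment criterion (Kallenberg, Cor.~14.9), using $p$-th moment bounds of increments via $\mathbb{E}[|\sum_k A_k|^p]\le C_p\,\mathbb{E}[\sum_k A_k^2]^{p/2}$, all on the good event $B_n$. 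You instead take an $n$-independent mesh $\delta$, split $\sv^{n,(\lambda)}$ into the Bernoulli-count part and the drift part, both monotone in $\lambda$, sandwich the oscillation over any $\delta$-window by the increments over a doubled grid cell, kill the drift increment deterministically via $\sum_m X^n_+(m)=n-1$, and control each grid-increment process in $s$ by a single conditional Doob $L^2$ maximal inequality followed by Chebyshev and a union bound over $O(1/\delta)$ cells (the $\delta$'s cancel, leaving $O(a\varepsilon^{-2}n^{-1/2})$). This is more elementary: only second moments, no interpolation in $\lambda$, no two-parameter moment criterion, and no need for the event $B_n$ at this stage; what the paper's heavier machinery buys is genuine equicontinuity of the interpolated process jointly in $(\lambda,s)$, which is slightly more than the lemma requires, whereas your argument delivers exactly the stated modulus bound. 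Two immaterial slips: with the paper's definition one has $\sv^{n,(\lambda)}=B^{n,(\lambda)}-A^{n,(\lambda)}$ rather than $A-B$ (irrelevant, since only differences in $\lambda$ enter), and the grid should extend one more step (take $K=\lceil a/\delta\rceil+2$, say) so that every pair $\lambda_1,\lambda_2\in[0,a]$ with $|\lambda_1-\lambda_2|\le\delta$ indeed lies in some $[\lambda_k,\lambda_{k+2}]$; also choose $\delta$ strictly below $\varepsilon/32$ so the deterministic drift contribution stays strictly under $\varepsilon/4$.
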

In fact, ${\sf v}^{n,(\lambda)}(s)$ appears to be very small since it is a sum of 
$ns + \sqrt{n}\sy_+^{n,(0)}(s)$ centred r.v.\ with variance $1/\sqrt{n}$ and the normalisation 
by $n^{-1/2}$ makes of the sum a centred r.v.\ with variance $1/\sqrt{n}$; however the 
fluctuations along the two dimensional rectangle are more difficult to handle. 
\begin{proof}
We discretise the parameter $\lambda$ and consider $\lambda_j= j a /\sqrt{n}$ for $j=1,\dots,n.$ Then, 
\[{\sf v}^{n,(\lambda_{j+1})}(s)-{\sf v}^{n,(\lambda_j)}(s)
=n^{-1/2}\sum_{m=1}^{\floor{ns}} 
\sum_{\ell=1}^{X_+^n(m)}\l(\I{ 1-\frac{ (j+1)a}{ n}\leq U_m(\ell)\leq 1-\frac{ja}n} -\frac{a}{n}\r).\]
Now,  ${\sf v}^{n,(\lambda)}$ does not fluctuate much between the any two successive 
$\lambda_j$, $j=1,\dots,n$:
\begin{align*}
\sup_{1\leq j\leq n\atop{\lambda \in [\lambda_j,\lambda_{j+1}]}}|\sv^{n,(\lambda)}(s)-\sv^{n,(\lambda_j)}(s)|
\leq& \l(\floor{ns}+\sqrt{n}\sy_+^{n,(0)}(s)\r)\frac{a}{n^{3/2}}\\
  &+\sup_{1\leq j\leq n}\frac{\#\l\{1-U_m(\ell)\in\big[\frac{ ja}{ n},\frac{ (j+1)a}{ n}\big], 
  m\in\cro{1,n}, \ell\in\cro{1,X_+^{n}(m)}\r\}}{n^{1/2}}.
\end{align*}
The first term goes to 0. As for the second one, for a single $j$, on $B_n$ the term is dominated by
$n^{-1/2}{\sf Bin}(n+n^{1+b},a/n)$, where ${\sf Bin}(n,p)$ denotes a binomial r.v.\ with parameters $n$ and $p$. 
Now, using Bernstein's inequality, one can show, that for some $c>0$, and $n$ large enough,
\[`P\l(\frac{{\sf Bin}(n+n^{1+b},a/n)}{n^{1/2}}>`e/4\r)\leq  \exp(-c`e^2n^{1/2}),\]
so that the union bound then suffices to control the supremum on $j\in\{1,2,\dots,n\}$. 
It remains to bound the fluctuations restricted to the $\lambda_j$, $1\le j\le n$. 

Now, let $(\lambda,s)\mapsto \sw^{n,(\lambda)}(s)$ be the continuous process 
(in $\bbC([0,a]\times[0,1],`R)$) which interpolates ${\sf v}^{n}$ as follows: 
for any $1\leq j \leq n$, $\sw^{n,(\lambda_j)}= {\sf v}^{n,(\lambda_j)}$. 
Each $\sw^{n,(\lambda_j)}$ is interpolated between the points ($k/n,(k+1)/n)$ 
(this corresponds to the discrete increments), and the interpolation from 
$\sv^{n,(\lambda_j)}(s)$ to $\sv^{n,(\lambda_{j+1})}(s)$ being then also linear for each $s$.
To end the proof of tightness, we will show that  $\sw^{n}$ is tight in $\bbC([0,a]\times[0,1],`R)$.  
We use the criterion given in Corollary~14.9 p.~261 of \cite{Kal}. It suffices to show that  
$(\sw^{n,(0)}(x))_{x\in[0,1]}$ is tight (which is the case since $\sy_+^{n,(0)}$ is tight), 
and for some constant $C$, some $\alpha,\beta>0$, for any $(\lambda_1,s_1)$ and  $(\lambda_2,s_2)$ 
in $[0,a]\times [0,1]$, any $n$ large enough,
\ben\label{eq:tty}
`E\Big[ \l|\sw^{n,(\lambda_1)}({s_1})-\sw^{n,(\lambda_2)}(s_2)\r|^\alpha \Big] 
\leq  C\|(s_1,\lambda_1)-(s_2,\lambda_2)\|^{2+\beta}.
\een
We will show this conditionally on $B_n$ only, which is sufficient too. 
As usual, it suffices to get the inequality at the discretisation points. 
\[`E \bigg[\l|\sw^{n,(\lambda_{j_1})}\bigg(\frac{k_1}n\bigg)-
\sy_+^{n,(\lambda_{j_2})}\bigg(\frac{k_2}n\bigg)\r|^p\bigg]
=
`E\bigg[\bigg|
n^{-1/2}\sum_{m=k_1+1}^{k_2} \sum_{\ell=1}^{X_+^n(m)} 
\l(\I{j_1 a/n <1-U_m(\ell)\leq j_2 a/ n}-a\frac{j_2-j_1}{n}\r)\bigg|^p\bigg]\]
Using that for any sequence of independent and centred random variables $(A_i)_{i\ge 1}$, we have 
$`E[|\sum_{k=1}^m A_k|^p]\leq C_p`E[\sum_{k=1}^m A_k^2]^{p/2}$, for some constant $C_p$, we obtain
\begin{align*}
`E\bigg[\l|\sw^{n,(\lambda_{j_1})}\bigg(\frac{k_1}n\bigg)-
\sy_+^{n,(\lambda_{j_2})}\bigg(\frac{k_2}n\bigg)\r|^p~\bigg|~\sy_+^{n,(0)}\bigg]
&\leq {C_p} \l\{\frac{k_2-k_1}n+n^{-1/2}\l(\sy_+^{n,(0)}\bigg(\frac{k_2}n\bigg)
-\sy_+^{n,(0)}\bigg(\frac{k_1}n\bigg)\r)\r\}^{p/2}\\
&\qquad \times \bigg(\frac{(j_2-j_1)a}{\sqrt{n}}\bigg)^{p/2}\frac{1}{\sqrt{n}^{p/2}}
\end{align*}
this is much smaller than needed: conditionally on $B_n$ this is at most
\[\frac{C_1}{n^{p/2}} \l(\lambda_{j_1}-\lambda_{j_2}\r)^{p/2}\l(\frac{k_2-k_1}{n}\r)^{p/2}
+ {C_2}  \l(\lambda_{j_1}-\lambda_{j_2}\r)^{p/2}  
\l(\frac{\sy_+^{n,(0)}(k_2/n)-\sy_+^{n,(0)}(k_1/n)}n\r)^{p/2}.\]
Finally, since $\sy_+^{n,(0)} \leq b'$ on $B_n$, we have 
\[\l(\frac{\sy_+^{n,(0)}(k_2/n)-\sy_+^{n,(0)}(k_1/n)}n\r)^{p/2}
\leq \l(\frac{M}{n}\r)^{p/2}
\leq  C'\l(\frac{k_2-k_1}{n}\r)^{p/2},\] 
which completes the proof of lemma.
\end{proof}

\section{Encoding the multiplicative coalescent: Proof of Theorem~\ref{theo:mainx}}
\label{sec:enc_mult}

Theorem \ref{theo:mainx} states the convergence of $\sy^{n}_\times$. 
Before proving this convergence, we need to establish carefully the distribution of the 
sequence $\sy^n_\times$. This is the aim of the next subsection. We then move on 
to the proof of convergence in the sense of finite-dimensional distributions in Section~\ref{sec:fdd_yx}
and of tightness of the sequence $(\sy^n_\times)_{n\ge 1}$ in Section~\ref{sec:tightness_yx}.

\subsection{Distribution of $\sy^{n }_\times$}
\label{sec:dissyn}

In this part, we work on ${\bf G}=K_n$ equipped with the weights ${\bf w}=(W(i,j), ij\in [n])$ on its edges, 
some i.i.d. uniform r.v. on $[0,1]$. Let $u_1,\dots,u_n$ be the list of nodes of $[n]$ 
sorted according to Prim's order on $({\bf G},{\bf w})$. We now let $U_{k}(\ell)= W(u_k,u_\ell)$.
In this case ${\bf G}_t$ coincides with $G(n,t)$, and Lemmas~\ref{lem:prpr} and~\ref{lem:red} apply.

For $i\ge 1$ and $t\in [0,1]$, define 
\begin{align*}
\cZ^t(i) &:= \big\{ u_k : i< k\le n, \exists j\le i, U_j(k) \le t \big\}\\
\cS^t(i) & :=\big\{ u_k : i< k\le n,  U_i(k)\le t\} \setminus \cZ^t(i),
\end{align*}
and let $Z^t(i)=\#\cZ^t(i)$ and $S^t(i)=\#\cS^t(i)$. The set  $\cZ^t_i$ is 
the list of nodes with Prim index greater than $i$, that share an edge in ${\bf G}_t$ 
with a node with index at most $i$. The set $\cS^t(i)$ records the neighbours of $u_i$ in ${\bf G}_t$ with Prim index larger than $i$, that are not neighbours of any $u_j$ for $ j < i$. 
Observe that for every $i$ the map $t\mapsto\cZ^t(i)$ is non-decreasing, non-negative, 
and that $\cZ^t(0)=\varnothing$ for every $t\in [0,1]$. Then define
$\Delta Z^t(i):=Z^t(i)-Z^t(i-1)$; in ${\bf G}_t$, this is the number 
of nodes in $\{u_{i+1},...,u_n\}$ that are adjacent to $u_i$ but are not 
to any of $u_1,\dots,u_{i-1}$, minus the number of nodes in 
$\{u_{i},\dots,u_{n}\}$ with an edge from $u_1,\dots,u_{i-1}$ but not from $u_i$. 
Hence $\Delta Z^t(i)\geq -1$ since only $u_i$ can be in $\cZ^t(i)$ but not in $\cZ^t(i-1)$. 

For a vector $\bx=(x_1,x_2,\dots, x_k)$ of distinct real numbers,
we write $\bx^{\downarrow}$ for the vector consisting of the elements $x_1,\dots, x_n$ 
sorted in decreasing order.

\begin{lem}\label{lem:dist_incr}
For every $i\in [n]$, conditionally on $(\cZ^t(i-1))_{t\in [0,1]}$, we have 
\begin{enumerate}[$(a)$]
\item $(U_i(j))^{\downarrow}_{i< j\le n}$ is the reordering in decreasing order of $n-i-1$ 
i.i.d.\ $[0,1]$-uniform random variables,
\item $(\Delta Z^t(i))_{t\in [0,1]} = 
(\#\big\{k : U_i(k)\le t, i<k\le n, k\not\in \cZ^t(i-1) \big\}
-\I{\cZ^t(i-1)\ne\varnothing})_{t\in [0,1]}$, and 
\item $(S^t(i))_{t\in [0,1]} 
= (\#\big\{k : U_i(k)\le t, i<k\le n, k\in \cZ^t(i-1) \big\})_{t\in [0,1]}.$ 
\end{enumerate}
\end{lem}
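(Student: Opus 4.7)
Part (a) is the substantive claim; parts (b) and (c) are essentially set-theoretic rewritings of the definitions of $\cZ^t(\cdot)$ and $\cS^t(\cdot)$.

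My plan for (a) is to enlarge the conditioning sigma-field to one on which the independence is transparent, then transfer back. Let $\mathcal{F}_{i-1}$ denote the sigma-field generated by the ordered list $(u_1,\dots,u_{i-1})$ together with the sub-array of edge weights $\{W(u_a,v) : a\le i-1,\, v\in[n]\}$ incident to $V_{i-1}$. Two observations drive the argument. First, $\mathcal{F}_{i-1}$ is finer than $\sigma((\cZ^t(i-1))_{t\in[0,1]})$ and, by Prim's rule, makes $u_i$ measurable as well, since $u_i$ is the external endpoint of the lightest edge leaving $V_{i-1}$. Second, because the underlying array $(W(v,v'))_{v\ne v'}$ is i.i.d. uniform, the residual weights $(W(v,v'))_{v,v'\in[n]\setminus V_{i-1}}$, whose positions are disjoint from those revealed in $\mathcal{F}_{i-1}$, are i.i.d. uniform on $[0,1]$ and independent of $\mathcal{F}_{i-1}$. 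Once conditioned on $\mathcal{F}_{i-1}$ the label $u_i$ is deterministic, and $\{W(u_i,v) : v\in[n]\setminus V_i\}$ is a fixed sub-collection of the residual weights of size $n-i$; hence conditionally on $\mathcal{F}_{i-1}$ it is i.i.d. uniform. The decreasing rearrangement $(U_i(j))^\downarrow_{i<j\le n}$ is by definition this multiset sorted, so it is distributed as the order statistics of i.i.d.\ uniforms, and the tower property transports this down to $\sigma((\cZ^t(i-1))_{t\in[0,1]})\subseteq\mathcal{F}_{i-1}$.

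For (b), I will use the elementary set identity
\[\cZ^t(i) = \bigl(\cZ^t(i-1)\cup\{u_k : i<k\le n,\, U_i(k)\le t\}\bigr)\setminus\{u_i\},\]
which is immediate from the definition of $\cZ^t(\cdot)$. Passing to cardinalities and subtracting $Z^t(i-1)$ yields the claimed expression for $\Delta Z^t(i)$ provided one identifies the $-\I{u_i\in\cZ^t(i-1)}$ with $-\I{\cZ^t(i-1)\ne\varnothing}$; this equivalence is exactly Prim's rule, which forces $u_i$ to be the external vertex of smallest entry time into $\cZ^{\cdot}(i-1)$, so the set is non-empty if and only if it already contains $u_i$. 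Claim (c) is then a direct restatement of the definition of $\cS^t(i)$ using the same decomposition.

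The only delicate point is the measurability/independence claim in (a): since the identity of $u_i$ is itself random, one has to justify that the weights $W(u_i,v)$ for $v\notin V_i$ are truly unconstrained by the conditioning. This is exactly what the enlargement to $\mathcal{F}_{i-1}$ achieves — under that larger sigma-field both $V_{i-1}$ and $u_i$ are frozen, while the positions $(u_i,v)$ with $v\notin V_i$ lie outside the index set of the sub-array already revealed. Once this is in place the rest is routine bookkeeping, entirely in the spirit of the complete-graph independence argument already used in Lemma~\ref{lem:prpr}(a).
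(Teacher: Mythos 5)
Your proof is correct and takes essentially the same route as the paper's: the heart of (a) is the observation that the Prim choice of $u_i$ and the whole process $(\cZ^t(i-1))_{t\in[0,1]}$ are determined by weights incident to $V_{i-1}$, so the weights from $u_i$ to $[n]\setminus V_i$ are fresh i.i.d.\ uniforms, and (b) rests on the same disjoint decomposition of $\cZ^t(i)$ together with the Prim rule identifying $\I{u_i\in\cZ^t(i-1)}$ with $\I{\cZ^t(i-1)\ne\varnothing}$, with (c) definitional. The only cosmetic difference is that you package the independence via the explicit enlarged sigma-field $\mathcal F_{i-1}$ and treat each $i$ directly, whereas the paper phrases the same argument as an induction on $i$ conditioning on $(\cZ^t(i-1))_{t\in[0,1]}$.
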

\begin{proof}
We define the sets $(\cZ^t(i),1\leq i \leq n)$ and the Prim ordering 
$(u_1,u_2,\dots, u_n)$ simultaneously. For $i\ge 1$, $V_i$ denotes the set of nodes 
$\{u_1,\dots, u_i\}$. 
We proceed by induction on $i\ge 1$ and prove (a), (b) and (c) simultaneously for every 
$t\in [0,1]$. Initially, we have $\cZ^t(0)=\varnothing$ and $i=1$, $u_1=1$, $V_1=\{u_1\}$. 
The weights $(W(u_1,k))_{1<k\le n}$ of the $n-1$ edges which have $u_1$ as an end point are 
independent and uniformly distributed on $[0,1]$, and by definition of ${\bf G}_t$, for every $t\in [0,1]$,
\begin{align*}
\Delta Z^{t}(1) 
& = \#\big\{v \in [n]\setminus V_1: W(1,v)\le t\big\}\\
& = \#\big\{k \in [n]\setminus V_1: U_1(k)\le t, k\not\in \cZ^{t}(0)\big\},
\end{align*}
which proves the base case since $\cZ^t(0)=\varnothing$, for every $t\in[0,1]$. Furthermore, 
$\cS^t(1)\subseteq \cZ^t(0) = \varnothing$. 
This proves all three claims for every $t\in[0,1]$, in the case that $i=1$. 

Suppose now that the claims (a), (b) and (c) hold true for all $j\in \{1,2,\dots, i\}$, 
and consider Prim's algorithm just after $u_{i}$ has been defined. 
By definition, the node $u_{i+1}$ is the node $v$ in $[n]\setminus V_i$ for which the 
distance $d(v, V_{i})$ is minimised. In particular, conditional on $(\cZ^t(i))_{t\in [0,1]}$,
\[d(u_{i+1},V_i)=\inf\{t\ge 0: \cZ^t(i)\ne \varnothing\}.\]
Observe that the choice of $u_{i+1}$ is done by looking at the weights of the edges $W(u_j,v)$ for $j\le i$ 
and $v\not\in V_i$. In particular, this choice is independent of the weights 
$(W(u_{i+1},k): k\not \in V_{i+1})$, so that these weights are also i.i.d.\ uniform 
on $[0,1]$, conditionally on the nodes rank $u_1,\dots,u_{i+1}$. 
This is even true conditionally on $(\cZ^t(i))_{t\in [0,1]}$ since these
random variables only depend on the weights $W(u_j,v)$, $1\le j\le i$, $v\in [n]$. 
Observe that once $u_{i+1}$ is defined, the collection (the set) of weights to its 
neighbours in $[n]\setminus V_{i+1}$ is fixed, even if we do not know yet the 
precise Prim order induced on $[n]\setminus V_{i+1}$, so (a) follows readily. 

To prove (b), observe that we have the following disjoint union (denoted by $\bigsqcup$),
\begin{align}\label{eq:disjoint_union}
\cZ^t(i+1) 
& = \big( \cZ^t(i) \setminus V_{i+1} \big) \bigsqcup \,\, \big( 
\big\{v\in [n]\setminus V_{i+1} : W(u_{i+1}, v)\le t\big\} \setminus \cZ^t(i) \big)\notag\\
& = \big( \cZ^t(i) \setminus V_{i+1} \big)
\bigsqcup \,\, \big\{k: U_{i+1}(k)\le t, i+1<k\le n, k\not\in \cZ^t(i)\big\},
\end{align}
which expresses the fact that we canonically assign the elements $v$ of $\cZ^t(i)$ to the first 
node $u_j\in V_i$ for which $W(u_j,v)\le t$. 
The first set of \eqref{eq:disjoint_union} is easy to deal with. Indeed, by 
Proposition~\ref{pro:Prim-perc}, we have: 
\begin{itemize}
     \item if $\cZ^t(i)=\varnothing$ none of the nodes in $V_i$ is connected to 
     any of the nodes in $[n]\setminus V_i$ by an edge of $E_t$; 
     \item if $\cZ^t(i)\ne \varnothing$ some nodes of $V_i$ are connected to some nodes in $[n]\setminus V_i$. 
     But then, by Proposition~\ref{pro:Prim-perc}, $u_{i+1}\in \cZ^t(i)$ 
     and there are $Z^t(i)-1\ge 0$ nodes of $[n]\setminus V_{i+1}$ which are connected 
     to some node of~$V_i$. 
\end{itemize} 
So, in any case, there are $(Z^t(i)-1)_+$ nodes of the set $[n]\setminus V_{i+1}$ 
which are already connected to nodes in $V_i$ by edges of $E_t$, and we have
\begin{equation}\label{eq:card_forbidden}
|\cZ^t(i) \setminus V_{i+1}| = (Z^t(i)-1)_+.
\end{equation}
The representation of (b) follows immediately, and (c) is straightforward from the definition.
\end{proof}

\begin{cor}\label{cor:dist_incr}There exists a collection $(U_i^\star(j))_{1\le i<j\le n}$
of i.i.d.\ random variables uniformly distributed on $[0,1]$ such that, for every 
$i\in [n]$, the family $(U^\star_i(j))_{i\le j\le n}$ is independent of $(Z^t(i))_{t\in [0,1]}$. 
Furthermore, conditionally on $(Z^t(i))_{t\in [0,1]}$, we have for every $t\in [0,1]$
\begin{enumerate}[$(a)$]
\item $\Delta Z^t(i) = 
\#\big\{k : U^\star_i(k)\le t, i+(Z^t(i-1)-1)_+<k\le n \big\} -\I{Z^t(i-1)>0}$, and 
\item $S^t(i) 
= \#\big\{k : U^\star_i(k)\le t, i<k\le i+(Z^t(i-1)-1)_+\big\}.$ 
\end{enumerate}
\end{cor}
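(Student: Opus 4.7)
The plan is to obtain the $U_i^\star(k)$'s by a deterministic (given the past) relabeling of the original weights $U_i(k)$. For each $k > i$, let
\[ \tau_k := \min_{j \le i-1} U_j(k), \]
the entrance time of $u_k$ into the non-decreasing trajectory $(\cZ^t(i-1))_{t \in [0,1]}$. Order the indices $\{i+1, \dots, n\}$ by increasing $\tau$-value to produce an (a.s.\ unique) permutation $\sigma_i$ of $\{i+1, \dots, n\}$, and define
\[ U_i^\star(k) := U_i(\sigma_i(k)), \qquad i < k \le n. \]
(For $i = 1$, simply take $\sigma_1$ to be the identity, since $\cZ^t(0) = \varnothing$.)

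The key observation is that although $\cZ^t(i-1)$ changes with $t$, the entrance times $\tau_k$ are single scalars, so $\sigma_i$ does \emph{not} depend on $t$: the same permutation simultaneously linearises the family $\{u_{i+1},\dots,u_n\}$ from ``already in $\cZ^t(i-1)$'' to ``not yet in $\cZ^t(i-1)$'' at every time $t$. By Prim's choice of $u_i$, we have $\tau_{u_i} \le \tau_k$ for all $k > i$, so $u_i \in \cZ^t(i-1)$ if and only if $Z^t(i-1) \ge 1$; consequently the set $\cZ^t(i-1) \cap \{u_{i+1}, \dots, u_n\}$ has cardinality $(Z^t(i-1) - 1)_+$ and consists precisely of the nodes $u_{\sigma_i(i+1)}, \dots, u_{\sigma_i(i+(Z^t(i-1)-1)_+)}$. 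Substituting $U_i(\sigma_i(k)) = U_i^\star(k)$ into parts (b) and (c) of Lemma~\ref{lem:dist_incr} then yields (a) and (b) of the corollary, respectively.

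For the joint distribution of $(U_i^\star(k))_{1 \le i < k \le n}$, I would argue by induction on $i$, taking as conditioning the entire history up to step $i-1$ (equivalently, the collection of all $U_{i'}(j)$ with $i' < i$, which determines $(\cZ^t(i-1))_{t \in [0,1]}$ and thus $\sigma_i$). By Lemma~\ref{lem:dist_incr}(a), $(U_i(k))_{i < k \le n}$ is i.i.d.\ uniform conditionally on this history, and since $\sigma_i$ is measurable with respect to it, the reordered family $(U_i^\star(k))_{i < k \le n}$ is also i.i.d.\ uniform conditionally on the history, hence independent of it. In particular, $(U_i^\star(k))_{i < k \le n}$ is independent of $(Z^t(i-1))_{t \in [0,1]}$ (this is the conditional independence asserted by the corollary, modulo an index shift in the statement) and of $\bigl(U_{i'}^\star(\cdot)\bigr)_{i' < i}$. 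Iterating yields the claimed global i.i.d.\ property.

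The only point requiring genuine care is the bookkeeping around $u_i$, which, as noted above, is always the first of the ``future'' nodes to enter $\cZ^t(i-1)$; this is responsible for the shift $(Z^t(i-1)-1)_+$ instead of $Z^t(i-1)$ in the split between ``already discovered'' and ``not yet discovered'' indices. Apart from this, the proof amounts to a measurable change of variable and is a routine consequence of Lemma~\ref{lem:dist_incr}.
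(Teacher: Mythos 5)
Your construction of the relabelling and your verification of the identities (a) and (b) --- ordering $\{i+1,\dots,n\}$ by the entrance times $\tau_k=\min_{j\le i-1}U_j(k)$, observing that one and the same permutation works for every $t$, and using Prim's minimality of $u_i$ to produce the shift $(Z^t(i-1)-1)_+$ --- is exactly the route taken in the paper (you even treat the special role of $u_i$ more explicitly than the paper's own proof does). The gap is in the distributional half. You invoke ``by Lemma~\ref{lem:dist_incr}(a), $(U_i(k))_{i<k\le n}$ is i.i.d.\ uniform conditionally on the history'', but Lemma~\ref{lem:dist_incr}(a) only concerns the \emph{decreasing rearrangement}, i.e.\ the multiset of values, and the statement you actually use is false: the indexation is by Prim rank, which is itself determined in part by the row-$i$ weights, so the indexed row is biased. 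Concretely, $U_i(i+1)=W(u_i,u_{i+1})$ is biased towards small values because $u_{i+1}$ is selected as the endpoint of the lightest edge leaving $V_i\ni u_i$; in the extreme case $i=1$ one has $U_1(2)=\min_{v\ne 1}W(1,v)$. For the same reason your prescription $\sigma_1=\text{identity}$ gives $(U^\star_1(k))_{1<k\le n}=(W(1,u_k))_{1<k\le n}$, which is \emph{not} an i.i.d.\ family, so the global i.i.d.\ claim of the corollary already fails at the first row; there one must relabel by a rule that does not look at the values of that row (e.g.\ by vertex label), not by Prim rank.

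What saves your construction for $i\ge 2$ --- and what the paper's proof really rests on --- is that the composite map from slot to node, $k'\mapsto u_{\sigma_i(k')}$, is measurable with respect to the exploration history $\cH_i$ generated by the identities $u_1,\dots,u_i$ and the weights incident to $V_{i-1}$ (equivalently, by the set-valued process $(\cZ^t(i-1))_{t\in[0,1]}$): the entrance time of a \emph{node} is a function of the weights out of $V_{i-1}$, even though its Prim rank is not. Conditionally on $\cH_i$, the unexplored weights $(W(u_i,v))_{v\notin V_i}$ are i.i.d.\ uniform --- this is the content of the \emph{proof} of Lemma~\ref{lem:dist_incr}, not of its statement --- so $(U_i^\star(k'))_{i<k'\le n}$ is i.i.d.\ uniform and independent of $\cH_i$, hence of $(Z^t(i-1))_{t\in[0,1]}$ and of the previously constructed rows, which are $\cH_i$-measurable; induction over $i$ then yields the full i.i.d.\ array. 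Your argument instead conditions on the Prim-rank-indexed family $(U_{i'}(j))_{i'<i,\,j\le n}$, which for $j>i$ already encodes information about the row-$i$ weights (which node receives rank $j$ depends on them), so that conditioning does not decouple the row-$i$ weights from the ``past'' and the conditional i.i.d.\ premise cannot be extracted from Lemma~\ref{lem:dist_incr}(a). The identities (a) and (b) are fine as you prove them; the independence and i.i.d.\ assertions need to be redone along the lines just described, and the $i=1$ relabelling needs to be changed.
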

\begin{proof}
The proof consists simply in observing that $\#(\cZ^t(i-1)\setminus V_{i})=(Z^t(i-1)-1)_+$ 
by \eqref{eq:card_forbidden}, and in reordering the random variables, $(U_i(j))_{i<j\le n}$ 
for every fixed $i$ in such a way that they are hit by the set $\cZ^t(i-1)$ in 
increasing order of index as time increases. So fix $i\ge 2$ and define 
$t^i_{i+1}=\inf\{t\ge 0: \#\cZ^t(i-1)\ge 1\}$. Note that a.s., $\cZ_{i-1}(t^i_{i+1})$ 
contains a single element which we denote by $\pi_i(i+1)$. Then, for $i<j<n$, 
let $t^i_{j+1}=\inf\{t> t^i_{j}: \#\cZ^t(i-1)>\#\cZ^{t-}(i-1)\}$ and define $\pi_i(j+1)$ 
to be the a.s.\ unique element of $\cZ^{t^i_{j+1}}(i-1)\setminus \cZ^{t^i_{j+1}-}(i-1)$. 
For $i<j\le n$, define $U^\star_i(j)=U_i(\pi_i(j))$. In order to conclude the proof, observe that 
the permutation $\pi_i$ is depends only on $(\cZ^t(i-1))_{t\in [0,1]}$, and is therefore independent 
of $(U_i(j))_{i<j\le n}$. 
\end{proof}
  
Using Lemma \ref{lem:rel},  the process $(Z^t(i),i\geq 0)$ seems to be a nice tool to study 
$\CC({\bf G}_t)$, but it is not since its convergence is not sufficient to entails the convergence of the sizes of the excursions . To circumvent this problem, the idea, already exploited by \cite{aldous1997} and \cite{CL}, 
is to use a companion process $Y$ which has a drift and for which the lengths of the 
excursions above the current minimum converge. 

For this, we use the settings of Corollary~\ref{cor:dist_incr}, and for $0\leq i \leq n, t\in[0,1]$ 
we set 
\begin{align}\label{eq:zedq}
X^{n,t}_\times(i)  
&= \#\big\{k : U^\star_i(k)\le t, i+(Z^t(i-1)-1)_+<k\le n \big\}  \\
Y_\times^{n,t}(i) &= \sum_{j=1}^i (X^{n,t}_\times(j)-1).
\end{align}

\begin{lem}For every $t\in [0,1]$, we have $Z^{n,t}_\times=\Psi  Y^{n,t}_\times$.
In particular,  by Lemma \ref{lem:Psi-exc} the excursions of $Y_\times^{n,t}$ above its current minimum coincide with the 
excursions of $Z^{n,t}$ away from zero. 
\end{lem}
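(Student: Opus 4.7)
The identity $Z^{n,t}_\times = \Psi Y^{n,t}_\times$ follows from an increment-by-increment comparison. Writing $\Delta f(i) := f(i)-f(i-1)$, Corollary~\ref{cor:dist_incr}(a) yields
\[
\Delta Z^t(i) \;=\; X^{n,t}_\times(i) - \I{Z^t(i-1) > 0},
\]
whereas by definition $\Delta Y^{n,t}_\times(i) = X^{n,t}_\times(i) - 1$. Subtracting, and using $Y^{n,t}_\times(0) = Z^t(0) = 0$, a telescoping sum gives
\[
Y^{n,t}_\times(i) \;=\; Z^t(i) - N(i), \qquad N(i) := \#\bigl\{0 \le j \le i-1 : Z^t(j) = 0\bigr\}.
\]
In words, $Y^{n,t}_\times$ follows $Z^t$ exactly, except that it picks up an extra $-1$ at each index $i$ where the Prim exploration has just finished a connected component and must restart from a fresh vertex.

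The second step is to identify the running minimum of $Y^{n,t}_\times$. Setting $i^\star := \max\{j \le i : Z^t(j) = 0\}$ (which is well-defined since $Z^t(0) = 0$), I would check that
\[
\min_{0 \le j \le i} Y^{n,t}_\times(j) \;=\; Y^{n,t}_\times(i^\star) \;=\; -N(i^\star).
\]
This is essentially immediate from the previous display. For $j \le i^\star$, monotonicity of $N$ and $Z^t \ge 0$ give $Y^{n,t}_\times(j) \ge -N(j) \ge -N(i^\star)$. For $i^\star < j \le i$, one has $Z^t(j) \ge 1$ and $N(j) = N(i^\star) + 1$ (the counter increments exactly once on leaving $i^\star$ and then stays constant, since $Z^t$ is strictly positive on $(i^\star, i]$ by maximality), hence $Y^{n,t}_\times(j) \ge 1 - N(i^\star) - 1 = -N(i^\star)$.

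Plugging the value of the running minimum into $\Psi Y^{n,t}_\times(i) = Y^{n,t}_\times(i) + N(i^\star)$ gives the announced identity for $Z^{n,t}_\times$. The second assertion of the lemma is then an immediate application of Lemma~\ref{lem:Psi-exc} to $f = Y^{n,t}_\times$, together with the discrete excursion convention recalled in Note~\ref{note:disc}. I do not anticipate any real obstacle; the only point requiring care is the purely combinatorial bookkeeping at each restart step, where the $(\,\cdot\,)_+$ appearing in Corollary~\ref{cor:dist_incr} is exactly what compensates the $-1$ in the definition of $\Delta Y^{n,t}_\times(i)$.
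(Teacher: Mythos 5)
Your route is the same as the paper's: its proof is precisely the increment comparison, namely that $\Delta Z^t(i)=\Delta Y^{n,t}_\times(i)$ except when $Z^t(i-1)=0$, in which case they differ by $1$; and your two intermediate facts, (i) $Y^{n,t}_\times(i)=Z^t(i)-N(i)$ and (ii) $\min_{j\le i}Y^{n,t}_\times(j)=Y^{n,t}_\times(i^\star)=-N(i^\star)$, are both correct and cleanly argued. The trouble is your last sentence. You yourself established that $N(j)=N(i^\star)+1$ for $i^\star<j\le i$, so $N(i)=N(i^\star)+\I{Z^t(i)>0}$; plugging (i) and (ii) into $\Psi Y^{n,t}_\times(i)=Y^{n,t}_\times(i)+N(i^\star)$ therefore yields $\Psi Y^{n,t}_\times(i)=Z^t(i)-\I{Z^t(i)>0}$, not $Z^t(i)$. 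The literal identity fails off the zero set of $Z^t$: if $u_1$ has two neighbours in $\bG_t$, then $Z^t(1)=2$ while $Y^{n,t}_\times(1)=1$ and $\Psi Y^{n,t}_\times(1)=1$. So the ``announced identity'' cannot be read off from your (correct) bookkeeping; it is an off-by-one of exactly the kind that Note~\ref{note:disc} is meant to absorb, and, to be fair, the paper's own terse proof (``the conclusion follows easily'') glosses over the same point.

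What your facts (i)--(ii) do give, and what is actually needed both for the ``in particular'' clause and for every later use of the lemma, is the excursion correspondence: $Z^t(i)=0$ if and only if $Y^{n,t}_\times(i)=-N(i)$, i.e.\ exactly at the successive times at which $Y^{n,t}_\times$ attains a new running minimum, the minimum dropping by exactly $1$ at each such time (between two consecutive zeros $i<i'$ of $Z^t$ one has $Y^{n,t}_\times(j)\ge -N(i)$, and $Y^{n,t}_\times(i')=-N(i)-1$). Hence the maximal intervals on which $Z^t>0$ coincide, with equal lengths, with the excursions of $Y^{n,t}_\times$ above its current minimum in the sense of Note~\ref{note:disc}. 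I would therefore either state and prove the corrected identity $\Psi Y^{n,t}_\times=Z^t-\I{Z^t>0}$, equivalently $(Z^t-1)_+$ on $\{Z^t>0\}$ and $0$ elsewhere, and then deduce the excursion statement, or deduce the excursion statement directly from (i)--(ii); as written, your final sentence asserts something that your own computation disproves.
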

\begin{proof} The processes $Z_\times^{n,t}$ and $Y^{n,t}_\times$ have the same increments, 
except when $Z^t_{i-1}=0$, in which case $\Delta Y^{n,t}_\times(i)=\Delta Z_\times^{n,t}(i)-1$. 
The conclusion follows easily.
\end{proof} 

We are now ready to prove the convergence of  $\sy^{n }_\times$. 
The proof is quite similar to that of Theorem \ref{theo:main+}.
Again it is sufficient to show the weak convergence in 
$\bbD([\lambda_\star,\lambda^\star],\bbC([0,a],`R))$ 
for $-\infty<\lambda_\star<\lambda^\star<+\infty$ and $a>0$ fixed. We start by revisiting the 
proof of Aldous \cite{aldous1997} of \eref{eq:mono-lambda}. Again, Lemma~\ref{lem:red} entails 
that $\sy^{n,(\lambda)}_\times$ has same distribution as 
Aldous' version $y^{n,(\lambda)}$ associated with breadth first order.
 
\subsection{Proof of convergence of $\sy^{n,(\lambda)}_\times$  for a fixed $\lambda$}
\label{sec:newproof_Aldous}

Here $\lambda$ is fixed, and we obtain the convergence in $\bbC([0,a],\bbR)$ of $\sy^{n,(\lambda)}_\times$.
Aldous  proved \eref{eq:mono-lambda} using the approximation of the Markov chain $Y^{n,(\lambda)}$ 
by a diffusion, the convergence being on $\bbD([0,1],\bbR)$; here, we provide a proof that is simpler 
and more easily extendable. For short, we use the following notation
\begin{align*}
X^{(\lambda)}(i)&=X^{n,p_\lambda(n)}_\times(i)\\
Y^{(\lambda)}(i)&=Y^{n,p_\lambda(n)}_\times(i)\\
Z^{(\lambda)}(i)&=Z^{n,p_\lambda(n)}_\times(i).
\end{align*}
Define also $\bar{X}^{(\lambda)}(i)$ and $\underline{X}^{(\lambda)}(i)$ by
\begin{align*}
\bar{X}^{(\lambda)}(i)
&=\#\big\{k : U^\star_i(k)\le p_\lambda(n), i <k\le n \big\}
\sim {\sf Bin}(n-i,p_\lambda(n))\\
\underline{X}^{(\lambda)}(i)
&=\#\big\{k : U^\star_i(k)\le p_\lambda(n), i+n^{1/2} <k\le n \big\}
\sim {\sf Bin}(n-i-n^{1/2},p_\lambda(n)).
\end{align*}
(The term $n^{1/2}$ in the definition of $\underline{X}^{(\lambda)}(i)$ may be replaced by $n^{\alpha}$,
for any $\alpha\in [1/3,2/3]$.)
Here, in Section~\ref{sec:newproof_Aldous}, $\lambda$ is fixed and we further shorten the notation by dropping 
the superscripts indicating its value. 
Then define $\bar{Y}(i)$ and $\bar{Z}(i)$ (resp.\ $\underline{Y}(i)$ and 
$\underline{Z}(i)$) with $\bar{X}$ (resp.\ $\underline{X}$) in the same way that 
$Y$ and $Z$ are defined with $X$ in \eqref{eq:zedq}.  
The random variables $X(i)$, $\bar X(i)$ and $\underline X(i)$ are all defined with the same uniforms, 
and, as long as $Z_i\leq n^{1/2}$, we have
\[\underline{X}(i)\leq X(i) \leq \bar{X}(i).\]
In particular, this inequality holds at least until the first time when $Z(i)$ exceeds $n^{1/2}$, 
which is no earlier than the first time $i$ when $\bar Z(i)\ge n^{1/2}$, that we denote by 
$\tau_{n^{1/2}}(\bar Z)$.
Now, consider some times $t_0:=0<t_1<\dots<t_\kappa\leq a$, for some $\kappa \ge 1$, and let us investigate 
the convergence of $(\Gamma(n^{2/3}t_j),1\leq j \leq \kappa)$ for $\Gamma=\bar{Y}$ and $W=\underline{Y}$. 
Set $\Delta t_j=t_j-t_{j-1}$ and $\Delta_2 t_j= t_j^2-t_{j-1}^2$. The increments 
$(\Delta \Gamma(n^{2/3}t_j):=\Gamma(n^{2/3}t_j)-\Gamma(n^{2/3}t_{j-1}),1\leq j \leq n)$ are independent, 
and for  $1\leq j \leq \kappa$, 
\begin{align*}
\Delta\bar{Y}(n^{2/3}t_j) 
&=\#\big\{k : U^\star_m(k)\le p_\lambda(n), m <k\le n, n^{2/3}t_{j-1}<m\leq n^{2/3}t_j \big\}
-n^{2/3}\Delta t_j \\
&\sim {\sf Bin}\bigg(n^{5/3}\Delta t_j-\frac{n^{4/3}}2\Delta_2 t_j+`e_1,p_\lambda(n)\bigg)
-n^{2/3}\Delta t_j
\end{align*}
and 
\begin{align*}
\Delta\underline{Y}(n^{2/3}t_j) 
&=\#\big\{k : U^\star_m(k)\le p_\lambda(n), m+n^{1/2} <k\le n, n^{2/3}t_{j-1}<m\leq n^{2/3}t_j \big\}
-n^{2/3}\Delta t_j\\
&\sim {\sf Bin}\bigg(n^{5/3}\Delta t_j-\frac{n^{4/3}}2\Delta_2 t_j
-n^{2/3+1/2}\Delta t_j+`e_2,p_\lambda(n)\bigg)-n^{2/3}\Delta t_j,
\end{align*}
where $`e_1$ and $`e_2$ account for the error made by replacing $k(k+1)^2/2$ by $k^2/2$ and by the 
approximation of the fractional parts; in particular, $`e_1,`e_2=O(n^{2/3})$ and eventually negligible.
Recall that $p_\lambda(n)=1/n+\lambda n^{-4/3}$. 
By the central limit theorem, for every $j\in \{1,\dots, \kappa\}$ we have
\[\frac{\Delta\bar{Y}(n^{2/3}t_j)
-\Delta\underline{Y}(n^{2/3}t_j) }{n^{1/3}}\dd 0\]
and 
\[\frac{\Delta\bar{Y}(n^{2/3}t_j)}{n^{1/3}}\dd \cN\bigg(\lambda(t_j-t_{j-1})
-\frac{(t_j^2-t_{j-1}^2)}{2},t_j-t_{j-1}\bigg)\]
where $\cN(\mu,\sigma^2)$ denotes the normal distribution with mean $\mu$ and variance $\sigma^2$.
This provides the convergence of the fdd of $\bar{Y}$ and $\underline{Y}$ 
to those of the process $(\sy^{(\lambda)}_{\times}(x), x\geq 0)$. This also implies that  
$\tau_{n^{1/2}}(\bar{Z})/n^{2/3}\to +\infty$. Then, the tightness of $Y$ follows from that 
of $\bar{Y}$ and $\underline{Y}$, and these ones are consequences of a simple control of the fourth moment of 
\[\frac{\Delta\bar{Y}(n^{2/3}t_j)}{n^{1/3}} 
-\bigg(\lambda(t_j-t_{j-1})-\frac{(t_j^2-t_{j-1}^2)}{2}\bigg)\]
which is $3(t_j-t_{j-1})^2+O(n^{-1/3})$ the $O(\,\cdot\,)$ term being independent of $0\leq t_j,t_{j-1}\leq a$ 
(we just used here that the fourth centred moment of ${\sf Bin}(n,q)$ is $nq(1-q)(1+(3n-6)(q-q^2))$. 
The same estimate with a different $O(\,\cdot\,)$ term holds for $\underline{Y}$. 
This completes the proof of \eref{eq:mono-lambda}. 

We now slightly adapt the proof to get the full convergence of the bi-dimensional process as stated in 
Theorem~\ref{theo:mainx}. We will prove the convergence in 
$\bbD([\lambda_\star,\lambda^\star]\times[0,a], \bbR)$ which will be sufficient to conclude. 

\subsection{Proof of the convergence of the fdd of  $\sy^{n}_\times$ to those of $\sy_\times$}
\label{sec:fdd_yx}

In the additive case, we used three main ingredients: the convergence of $\sy^{n,(0)}_+$ to 
$\sy^{(0)}_+$ (stated in \eref{eq:yxn}), a global bound on the increments of $\sy^{n,(\lambda)}_+$ 
(the bound in \eref{eq:conv1}), and a global bound on $(\sy^{n,(0)}_+,n\geq 0)$ (in \eref{eq:conv2}). 
The proof proceeding by comparing $\sy^{n,(\lambda)}_+$ with  $\sy^{n,(0)}_+$, and the expressing the limiting 
behaviour of $\sy^{n,(\lambda)}_+$ in terms of $\sy^{(0)}_+$, the limit of $\sy^{n,(0)}_+$.

Here we rely on the same ingredients. The first one is \eref{eq:mono-lambda}, 
taken at $\lambda^\star$
\ben\label{eq:mono-lambda2} 
\sy^{n,(\lambda^\star)}_\times \dd \sy^{(\lambda^\star)}_\times,
\een
and we will express the limit of $\sy^{n,(\lambda)}_\times$ for $\lambda \in[\lambda_\star,\lambda^\star]$ 
in terms of $\sy^{(\lambda^\star)}_\times$. The convergence \eref{eq:mono-lambda2} implies that
\[\sup_{x\in[0,a]}\l|\sy^{n,(\lambda^\star)}_\times(x)\r| 
\dd \sup_{x\in[0,a]}\l|\sy^{(\lambda^\star)}_\times(x)\r|<_{a.s.}+\infty.\]
We rely on the approximations $\bar{X}^{(\lambda)}$ and $\underline{X}^{(\lambda)}$ 
introduced in the previous section, and recall that all the variables $X,\bar{X},\underline{X}$ are all 
defined on the same probability space (the space where are defined the $U^\star$).

Clearly, we have
\[\sup_{0\leq i \leq an^{2/3}} X^{(\lambda)}(i)\leq \max_{0\leq i \leq an^{2/3}} \bar{X}^{(\lambda^\star)}(i),\] 
where the $\bar X^{(\lambda^\star)}(i)$, $1\le i\le an^{2/3}$, are independent 
${\sf Bin}(n-i,p_{\lambda^\star}(n))$ random variables. 
Thus, for $`e>0$ small,  
\begin{align*}
`P\bigg(\sup_{\lambda\in[\lambda_\star,\lambda^\star]}
\sup_{0\leq i \leq a n^{2/3}} X^{(\lambda)}(i) \geq n^{`e}\bigg)
&\le a n^{2/3} \cdot `P({\sf Bin}(n,p_{\lambda^\star}(n))\geq n^{`e})\\
&=O(\exp(-n^{\epsilon}/2)).
\end{align*}

Again by Skorokhod representation theorem, we consider a space where the convergence 
\eref{eq:mono-lambda2} holds with probability one.
Let us prove that  for $\lambda\in[\lambda_\star,\lambda^\star]$,
\ben\label{eq:joint}
\sup_{0\leq x \leq a} \l|\sy^{n,(\lambda^\star)}_\times(x)-\sy^{n,(\lambda )}_\times(x)
-\l(\lambda^\star-\lambda\r)x\r|\proba 0,\een
which suffices to get the convergence of the fdd. It suffices to get the same 
result with $\bar{\sy}^{n,(\lambda^\star)}_\times(x)-{\bar\sy}^{n,(\lambda )}_\times(x)$ 
and $\underline{\sy}^{n,(\lambda^\star)}_\times(x)-{\underline\sy}^{n,(\lambda )}_\times(x)$ 
instead (where $\bar \sy$ and $\underline \sy$ are defined from $\bar Y$ and $\underline Y$, respectively, 
in the same way that $\sy$ is defined from $Y$). But for these processes this follows from the fact that
\[\bar{Y}^{(\lambda^\star)}(n^{2/3}x)-\bar{Y}^{(\lambda)}(n^{2/3}x)
\sim {\sf Bin}\bigg(n^{5/3}x-\frac{n^{4/3}}{2}x^2+`e_n,p_{\lambda^\star}(n)-p_{\lambda}(n)\bigg).\] 
The same argument for ${\underline Y}^{(\lambda )}(n^{2/3}x)$ allows one to conclude.

\subsection{Tightness of the sequence $(\sy^{n}_\times)_{n\ge 1}$}
\label{sec:tightness_yx}

We mimic what is done for the proof of the tightness of $(\sy^{n}_+)_{n\ge 1}$ in Section~\ref{sec:CVFDDyplus}. 
We consider here the modulus of continuity 
\[w_{\delta}(\sy^{n}_\times)=\sup\big\{ \big\|\sy^{n,(\lambda_1)}_\times-\sy_\times^{n,(\lambda_2)} 
\big\|_\infty~: |\lambda_1-\lambda_2|\leq \delta, \lambda_\star\leq \lambda_1,\lambda_2 
\leq \lambda^\star\big\},\]
where $\|f\|_\infty=\sup\{|f(x)|:0\leq x\leq a\}$. Again, for any $`e>0$, 
any small fixed $b>0$,  the event 
\[B_n:=\l\{\max_{1\leq i \leq n} X^{n,(\lambda^\star)}_\times(i)\leq n^{b}, 
\big\|\sy^{n,(\lambda^\star)}_\times\big\|_\infty \leq n^{2/3}\r\}\]
has probability at least $1-`e$ for $n$ large enough.

As in the proof of the tightness of $\sy_+^{n,(\lambda)}$, one discretises the time 
parameter $p_{\lambda}(n)$. For $1\le j\le n$, we define 
\[ \lambda_j = \lambda_j(n) =  \lambda_{\star}+\frac{j}{n}(\lambda^\star-\lambda_\star).\]
When $j$ goes from 0 to $n$, $p_{\lambda[k]}(n)=1/n+\lambda_j/n^{4/3}$ goes from 
$p_{\lambda_\star}(n)$ to  $p_{\lambda^\star}(n)$. 
For $j\in\{0,...,n\}$, define $\sy^{n,[\lambda_j]}_\times=\sy^{n,(\lambda_j)}_\times$; then 
each one of the processes $\sy^{n,[\lambda_j]}$ is already continuous in $[0,a]$.
In order to obtain a process that is continuous for $\lambda\in[\lambda_j,\lambda_{j+1}]$ and 
$x\in [0,a]$, $\sy^{n,[\lambda]}_\times(x)$ is obtained  by linear interpolation 
of $\sy^{n,[\lambda_j]}_\times(x)$ and  $\sy^{n,[\lambda_{j+1}]}_\times(x)$.
Instead of proving the tightness of $\lambda\mapsto(x\mapsto \sy^{n,(\lambda)}_\times(x))$ 
in $\bbD([\lambda_\star,\lambda^\star],\bbC([0,a],\bbR))$, we prove the tightness of 
$(\lambda,x)\mapsto\sy^{n,[\lambda]}_\times(x)$ in $\bbC([\lambda_\star,\lambda^\star]\times[0,a],\bbR)$. 
For this we use a moment criterion, that is, a bound of the type 
\[
`E\big[\big| \sy^{n,[\lambda_2]}_\times(s_2)-\sy^{n,[\lambda_1]}_\times(s_1)\big|^{\alpha}\big]
\leq C \|(s_1,\lambda_1),(s_2,\lambda_2)\|^{2+\beta},\] 
for some norm $\|\,\cdot\,\|$, where $\alpha,\beta>0$ and $C$ is a constant.
Again, we get these bounds for $\bar{\sy}^{n}_\times$ and $\underline{\sy}^{n}_\times$ 
instead which is again sufficient to conclude.

However, for $\lambda_2\geq \lambda_1$, and $0\leq t_1,t_2\leq a$, we have
\[\bar{\sy}_\times^{n,[\lambda_2]}(t_2)-\bar{\sy}_\times^{n,[\lambda_1]}(t_1)\sim {\sf Bin}(N,q)\]
with 
\begin{align*}
N&= n^{5/3}(t_1\vee t_2-t_1\wedge t_2)-\frac{n^{4/5}}{2}\l((t_1\vee t_2)^2-(t_1\wedge t_2)^2\r)\\
q&= p_{\lambda_2}(n)-p_{\lambda_1}(n)=\frac{\lambda_2-\lambda_1}{n^{4/3}}.
\end{align*}
Again, a simple control of the fourth moment, the same as that of Section~\ref{sec:newproof_Aldous} 
suffices to conclude. 
The control of $\underline{\sy}^{n}_\times$ is done along the same lines. This suffices 
to conclude since the limits of $\bar{\sy}^{n,[\lambda^\star]}$ and $\underline{\sy}^{n,[\lambda^\star]}$ 
are both the same, as well as that of $\bar{\sy}^{n,[\lambda_\star]}$ and of $\underline{\sy}^{n,[\lambda_\star]}$, 
and since $\underline{\sy}^{n}\leq \sy \leq \bar{\sy}^{n}$.

\section{Standard coalescents: Proofs of Theorem~\ref{thm:conv-coal} and~\ref{thm:augmented_conv}}
\label{sec:proofs_sequences}

\subsection{Proof of Theorem~\ref{thm:conv-coal}}
\label{sec:proof_conv-coal}

Recall that a subset $K$ of $\ell^p$ has a compact closure if the following conditions hold:
\begin{enumerate}[$(i)$]
\item for any $i$ , $\{x_i ~: x_i \in K\}$ is bounded, and
\item for any $`e>0$, there exists $n\ge 1$ such that for all $x \in K$, $\sum_{k\geq n} |x_k|^p\leq `e$.
\end{enumerate}

Here, we deal with sequences of excursion-lengths that are non-negative by nature. 
So from now on, we focus on the subspace
$\ell^p_{\geq 0}=\l\{x=(x_i,i\geq 1), x_i\geq 0, \forall i, x \in \ell^p\r\}$. 
On $\ell^p$ define the operator $S^p$ by
\[S^p(x)=\Bigg(\sum_{j=1}^k x_j^p, k\ge 1\Bigg)\]
the map associating the $p$-partial sum of $x$. The map $S^p$ is a continuous injective 
on $\ell^p_{\geq 0}$. Write $K'=S^p(K)$. If $K$ satisfies $(i)$ and $(ii)$ then: 
\begin{enumerate}[$(i')$]
\item for any $k\ge 1$, $\{y_k: y\in K'\}$ is bounded;
\item for any $y\in K'$, the sequence $(y_k)_{k\ge 1}$ 
is non-decreasing an has a finite limit as $k\to +\infty$.
\end{enumerate}
Conversely, if $K'$ satisfies $(i')$ and $(ii')$ 
then $(S^p)^{-1}(K')$ has a compact closure.

We now come back to our model: $\lambda\to \bgam^{+,n}(\lambda)$ and 
$\lambda\to \bgam^{\times,n}(\lambda)$ that can be considered as elements of 
$\ell_{\geq 0}^1$ and $\ell_{\geq 0}^2$ respectively. An important property is that
for any $k$, any $n$, $\lambda\to S^1(\bgam^{+,n}(\lambda))_k$ and 
$\lambda\to S^2(\bgam^{\times,n}(\lambda))_k$ are both non decreasing 
(recall that the sequences $\bgam$ are sorted). 
The second point comes from the fact that $(x_i+x_j)^2\geq x_i^2+x_j^2$ implying that the 
coalescence of any two clusters will never decrease $S^2(x)_k$. 

\begin{lem}\label{lem:compact_coal}
Let  $I=[\lambda_\star,\lambda^\star]\subset `R$, and let $p\geq 0$. 
Let $\cK$ be a subset of  $\bbD(I,\ell^p_{\geq 0})$.
Assume that the following conditions hold:
\begin{compactenum}[$(a)$]
\item for any $f \in {\cal K}$, any integer $k\ge 1$, 
$\lambda\mapsto S^p(f(\lambda))_k$ is non-decreasing,
\item for any $k\ge 1$, $\{f(\lambda^\star)_k~:f \in {\cal K}\}$ is bounded, and
\item for any $\epsilon>0$, there exists $M\geq 0$ such that for any $m\ge M$ such that for any $f\in \cal K$ we have 
$$\inf_{\lambda\in I}S^p(f(\lambda))_m 
\ge \sup_{\lambda\in I} \lim_{k\to\infty} S^p(f(\lambda))_k -\epsilon.$$
\end{compactenum}
Then ${\cal K}$ is relatively compact.
\end{lem}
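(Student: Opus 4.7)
\medskip\noindent\textbf{Proof plan.}\
The strategy is to reduce relative compactness in $\bbD(I,\ell^p_{\geq 0})$ to the analysis of real-valued, non-decreasing, uniformly bounded functions via the family of partial-sum maps $f\mapsto\big(\lambda\mapsto S^p(f(\lambda))_k\big)_{k\geq 1}$. Condition $(a)$ puts us in the realm of Helly's selection theorem coordinate-wise, while $(b)$ and $(c)$ will provide the uniform bounds and tail control needed to promote pointwise convergence into genuine convergence in $\ell^p_{\geq 0}$, and then in the Skorokhod space.

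First I would check that for every fixed $\lambda\in I$ the slice $\{f(\lambda):f\in \cK\}$ is relatively compact in $\ell^p_{\geq 0}$, using the characterization $(i)$--$(ii)$ of compactness in $\ell^p$ recalled at the start of the section. Monotonicity $(a)$ gives $S^p(f(\lambda))_k\leq S^p(f(\lambda^\star))_k$, so $f(\lambda)_k^p\leq S^p(f(\lambda^\star))_k$ is bounded uniformly in $f$ by $(b)$, providing $(i)$. For the tail condition $(ii)$, observe that
\[
\sum_{k>m} f(\lambda)_k^p \;=\; \lim_{k\to\infty} S^p(f(\lambda))_k - S^p(f(\lambda))_m \;\leq\; \sup_{\mu\in I}\lim_{k\to\infty}S^p(f(\mu))_k - S^p(f(\lambda))_m,
\]
which is at most $\epsilon$ for $m\geq M$ by $(c)$, uniformly over $f\in\cK$ and $\lambda\in I$.

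Given a sequence $(f_n)$ in $\cK$, I would next apply Helly's selection theorem coordinate-wise: for each $k$, the sequence $\lambda\mapsto S^p(f_n(\lambda))_k$ consists of non-decreasing functions that are uniformly bounded (by $(b)$). A diagonal extraction yields a subsequence, still denoted $(f_n)$, and non-decreasing functions $\phi_k:I\to[0,\infty)$ such that $S^p(f_n(\lambda))_k\to\phi_k(\lambda)$ at every continuity point of $\phi_k$, for every $k$. Because $S^p(f_n(\lambda))_k\geq S^p(f_n(\lambda))_{k-1}$ passes to the limit, one can define a candidate limit $g$ by
\[
g(\lambda)_k \;=\; \big(\phi_k(\lambda)-\phi_{k-1}(\lambda)\big)^{1/p}, \qquad \phi_0\equiv 0.
\]
Applying $(c)$ to the limit shows that $g(\lambda)\in\ell^p_{\geq 0}$ for each $\lambda$, and the tail $\sum_{k>m}g(\lambda)_k^p$ is controlled uniformly in $\lambda$, so that each $\phi_k$ inherits right-continuity with left limits from its approximations.

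Finally, I would promote this pointwise-at-continuity-points convergence to Skorokhod convergence in $\bbD(I,\ell^p_{\geq 0})$. At any $\lambda$ that is a continuity point of \emph{every} $\phi_k$ (the complement is countable, hence the continuity set is dense), the pointwise convergence $S^p(f_n(\lambda))_k\to S^p(g(\lambda))_k$ for each $k$, combined with the uniform tail bound from Step 1, upgrades to $f_n(\lambda)\to g(\lambda)$ in $\ell^p$. To obtain Skorokhod convergence, given $\epsilon>0$, pick $M$ from $(c)$ so that only the first $M$ coordinates of the partial sums matter up to $\epsilon$; since for non-decreasing real-valued functions pointwise convergence at continuity points is equivalent to Skorokhod convergence, one may choose time-changes $\lambda_n^{(1)},\dots,\lambda_n^{(M)}$ matching the jumps of $\phi_1,\dots,\phi_M$, and then amalgamate them into a single time-change that is close to the identity and works simultaneously for all $M$ coordinates. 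The remaining tail contributes at most $\epsilon$ uniformly, completing the argument.

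\medskip\noindent\textbf{Main obstacle.}\
The delicate step is the final one: Skorokhod convergence in an $\ell^p$-valued space requires a \emph{single} time-change common to all coordinates, whereas Helly's theorem only supplies coordinate-wise pointwise limits. Condition $(c)$ is what saves the day by reducing the problem to finitely many coordinates at the cost of $\epsilon$, after which the classical one-dimensional jump-matching for monotone cadlag functions can be amalgamated into one time-change.
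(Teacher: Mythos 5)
Your strategy is essentially the one the paper uses: coordinate-wise Helly selection applied to the monotone, uniformly bounded partial sums $\lambda\mapsto S^p(f_n(\lambda))_k$, a diagonal extraction over $k$, and condition $(c)$ to reduce matters to finitely many coordinates up to $\epsilon$. Your Step~1 (uniform coordinate bounds from $(a)$--$(b)$ and the uniform tail bound from $(c)$) and the construction of the candidate limit from the differences $\phi_k-\phi_{k-1}$ are correct elaborations of what the paper leaves implicit. The problem sits exactly where you place your ``main obstacle''. The claim that \emph{for non-decreasing real-valued functions pointwise convergence at continuity points is equivalent to Skorokhod convergence} is false for the $J_1$ topology: take $F_n=\tfrac12\mathbf{1}_{[1/2-1/n,1]}+\tfrac12\mathbf{1}_{[1/2+1/n,1]}$ on $[0,1]$. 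These are cadlag, non-decreasing, uniformly bounded, and converge at every continuity point of $F=\mathbf{1}_{[1/2,1]}$, but they do not converge in $\bbD([0,1],\bbR)$: a $J_1$ limit with a jump of size $1$ must be matched by jumps of $F_n$ of size tending to $1$, whereas every jump of $F_n$ has size $\tfrac12$; and no subsequence converges either, since any subsequential $J_1$ limit agrees with $F$ at continuity points and hence equals $F$. So Helly hands you pointwise limits $\phi_k$, not the coordinate-wise $J_1$ convergence on which your jump-matching and time-change amalgamation are built, and nothing in $(a)$--$(c)$ excludes this ``two jumps merging into one'' behaviour.

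In fact the phenomenon survives in the vector-valued setting of the lemma: with $p=1$, let $f_n(\lambda)=(1/2,1/4,1/4,0,\dots)$ for $\lambda<1/2-1/n$, $f_n(\lambda)=(3/4,1/4,0,\dots)$ for $1/2-1/n\le\lambda<1/2+1/n$, and $f_n(\lambda)=(1,0,\dots)$ for $\lambda\ge 1/2+1/n$. Conditions $(a)$, $(b)$, $(c)$ hold (with $M=3$, even $\epsilon=0$), every jump of $f_n$ has $\ell^1$-size $\tfrac12$, while the only possible subsequential limit has a single jump of size $1$; so the family is not relatively compact for the $J_1$ topology, and your final step cannot be repaired from $(a)$--$(c)$ alone. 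To be fair, the paper's own proof is no more careful at this point: it rests on the parenthetical assertion that any sequence of bounded non-decreasing functions has an accumulation point in $\bbD(I,\bbR)$, which fails for $J_1$ by the same example (it is true for pointwise/Helly convergence, or for the weaker $M_1$ topology). So the obstacle you identify is genuine, and closing it requires something beyond both your argument and the paper's sketch --- either working with a weaker topology, or an additional hypothesis (or, in the probabilistic application, an argument) preventing several macroscopic jumps from accumulating at a single time in the limit.
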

\begin{proof}Let $f_n$ be a sequence of functions in ${\cal K}$. One may extract from $f_n$ a 
subsequence $(f_{n[k]},k \geq 0)$ such that $\lambda\to S(f_n(\lambda))_1$ converges in $\bbD(I,`R)$ 
(any sequence of bounded non decreasing functions on $I$ has an accumulation point in $\bbD(I,`R)$). 
From a diagonal procedure one may further extract a subsequence $(f_{n[k]},k \geq 0)$ such that 
$\lambda\to S(f_n(\lambda))_k$ converges in $\bbD(I,`R^k)$ for any $k \leq K$ (for any $K$). 
Condition $(c)$ provides the necessary tightness and allows one to conclude. 
\end{proof}

Lemma~\ref{lem:compact_coal} permits to complete the proof of Theorem~\ref{thm:conv-coal}. 
We start with the additive case. Note that in this case, $\|\bgam^{+,n}(\lambda)\|_1=1$,
for every $\lambda$, so (b) is satisfied. Condition (a) also clearly holds. 
Furthermore, for every integer $m\ge 1$, the tail 
\[\|\bgam^{+,n}(\lambda)\|_1-S^1(\bgam^{+,n}(\lambda))_m=\sum_{i> m} \gamma^{+,n}(\lambda)\]
is largest at $\lambda=\lambda_\star$, and for Condition (c) in Lemma~\ref{lem:compact_coal} to 
be satisfied, it suffices that there exists $m$ such that for any $f\in {\cal K}$, we have 
\[S^1(f(\lambda_\star))_m \geq \lim_{k\to\infty} (S^1(f(\lambda_\star))_k)-`e.\]
Let $I=[\lambda_\star,\lambda^\star]\in(-\infty,0]$. 
By Theorem \ref{theo:main+},  $\sy^{n}_+ \to \sy_+$ in distribution in $\bbD(I,\bbC([0,1],`R))$,
which is separable. 
By Skorokhod representation theorem, there exists a probability space on which this
convergence holds a.s..
Let us work on this space and use the same names for the variables $\sy^{n}_+$ and $\sy_+$. 
The sum of excursion lengths above the minimum of $\sy_+(\lambda_\star)$ is 1; so for any $`e>0$, 
there exists a $k\ge 1$ such $S^1(\bgam^+(\lambda_\star))_k\geq 1-`e/2$. Hence, a.s.\ for all $n$ 
large enough, $S^1(\bgam^{n,+}(\lambda_\star))_k\geq 1-`e$, so that for any 
$\lambda\in[\lambda_\star,\lambda^\star]$, we have $S^1(\bgam^{n,+}(\lambda))_k\geq 1-`e$,
which proves that (c) holds. By Lemma~\ref{lem:compact_coal}, the family of functions 
$(\bgam^{+,n}(\lambda))_{\lambda \in [\lambda_\star,\lambda^\star]}$, $n\ge 1$, is 
tight in $\bbD([\lambda_\star,\lambda^\star], \ell^1_\da)$. 
Besides, on the space on which we work, a.s.\ for any 
$\lambda_\star\leq\lambda_1<\cdots \lambda_\ell\leq \lambda^\star$, any $1\leq j \leq \ell$,  
we have $\max_{i\leq k}|S(\bgam^{n,+})(\lambda_j)_i-S(\bgam^{+}(\lambda_j))_i|\to 0$, 
so that we have convergence in distribution.

In the multiplicative coalescent, conditions (a) and (b) easily follow from the monotonicity
and convergence in distribution of $\bgam^{\times,n}(\lambda^\star)$, as $n\to\infty$.  
Here, we are working in $\ell^2$ and since the norm is 
growing with $\lambda$ the tail of $\bgam^{+,n}(\lambda)$ is a priori not monotonic in $\lambda$. 
Thus, we cannot bound the supremum for $\lambda\in I$ by the value at $\lambda_\star$ as 
we did for the additive case and (c) is a little more delicate. 
However, for any $m\ge 1$, the only way for the tail 
\[\|\bgam^{\times,n}(\lambda)\|_2^2 - S^2(\bgam^{\times,n}(\lambda))_m
= \sum_{i>m} \gamma^{\times,n}(\lambda)^2\]
to decrease is that some mass 
gets moved to some component of index at most $m$ because of a coalescent that involves 
at least one component of index smaller that $m$. In other words
if we suppress the $m$ largest components, and let only the components of the tail evolve on 
their own according to the dynamics of the multiplicative coalescent, then their $2$-mass is monotonic (and their $\|.\|_2$ norm is larger than their contribution to the $\ell^2$ norm in the presence of 
the $m$ first ones). Hence, one may build on the same probability space a coupling of the initial coalescent and the one with the $m$ largest components suppressed. 

 For $m\ge 1$, define the $m$-tail to be the sequence
\begin{equation}\label{eq:def_tail}
\btau^{[m],n}(\lambda):=(\gamma^{\times,n}_{m+1}(\lambda),\gamma^{\times,n}_{m+2}(\lambda),\dots) 
\in \ell^2_\da.
\end{equation}
 Let $\bT^{[m],n}(\lambda)$ denote the state of 
the multiplicative coalescent at time $\lambda$ when started from 
$\bT^{[m],n}(\lambda_\star)=\btau^{[m],n}(\lambda_\star)$.
Then, by the previous discussion we have (on a space) for $\lambda \in [\lambda_\star,\lambda^\star]$, 
\begin{equation}\label{eq:bound_tail_tau}
\|\btau^{[m],n}(\lambda)\|_2^2 \le \|\bT^{[m],n}(\lambda)\|_2^2.
\end{equation}

Now, in order to prove that $(c)$ in Lemma~\ref{lem:compact_coal} holds, we rely on the
Feller property. First note that, by the triangle inequality, for every $m,n\ge 1$,
\[
\|\btau^{[m],n}(\lambda_\star)\|_2
\le \|\btau^{[m]}(\lambda_\star)\|_2 + \|\bgam^{\times,n}(\lambda_\star)-\bgam^{\times}(\lambda_\star)\|_2.
\]
So the starting point of $\bT^{[m],n}$ can be made arbitrarily small by choice of $m$ and $n$ 
large enough, with high probability: $\|\btau^{[m],n}(\lambda_\star)\|_2\to 0$ in distribution 
as $n,m\to \infty$. Thus, by the (spatial) Feller property, we obtain at time $\lambda$
\[\|\bT^{[m],n}(\lambda^\star)\|_2\to 0,\]
in distribution as $m,n\to\infty$. Together with \eqref{eq:bound_tail_tau}, this yields the desired 
uniform bound on $\btau^{[m],n}$ on $[\lambda_\star,\lambda^\star]$. 
It follows that (c) is satisfied with high probability, which 
is sufficient to prove tightness of the family of functions $(\bgam^{\times,n}(\lambda))_{\lambda\in I}$,
$n\ge 1$, in $\bbD(I, \ell^2_\da)$. To complete the proof of the convergence, 
it suffices now to prove convergence of the fdd.

To this aim, use Theorem~\ref{theo:mainx} and a Skorokhod's representation in order to obtain 
a sequence of functions $(\sy_\times^n)_{n\ge 1}$ that converge a.s.\ in 
$\bbD(\bbR^+\times \bbR, \bbR)$. Then, in particular, for any $\ell\ge 1$ and for any 
$\lambda_\star\le \lambda_1<\lambda_2<\dots<\lambda_\ell\le \lambda^\star$, we have
\[\big(\sy_\times^{n,(\lambda_1)},\dots, \sy_\times^{n,(\lambda_\ell)}\big)
\to \big(\sy_\times^{(\lambda_1)},\dots, \sy_\times^{(\lambda_\ell)}\big)\] 
a.s.\ as $n\to\infty$.
The result follows from a simple adaptation of the arguments leading to Lemma~7 
of \cite[Section 2.3]{aldous1997}. 

\begin{proof}[Proof of Corollary~\ref{thm:main_continuous}]
Since the finite-dimensional distributions (fdd) characterize the law of the process,
it suffices to verify that the fdd of $(\bgam^+(e^{-t}))_{t\in \bbR}$ and 
$(\bgam^\times(\lambda)_{\lambda\in \bbR}$ coincide with those of the standard additive and 
multiplicative coalescents, respectively. 

Consider any natural number $k\ge 1$ and any $t_1<t_2<\dots<t_k$. Then, for fixed $n\ge 1$, 
the vector $(\bgam^{+,n}(e^{-t_1}), \dots, \bgam^{+,n}(e^{-t_k}))$ is 
distributed as the values at times $t_1,t_2,\dots, t_k$ of the additive coalescent started at time $t_1$ 
in the state $\bgam^{+,n}(e^{-t_1})$. Since 
$$(\bgam^{+,n}(e^{-t_1}), \dots, \bgam^{+,n}(e^{-t_k})) \to (\bgam^{+}(e^{-t_1}), \dots, \bgam^{+}(e^{-t_k}))$$
in distribution as $n\to\infty$, the Feller property of the additive coalescent implies that if the coalescent 
is started at time $t_1$ in state $\bgam^{+}(e^{-t_1})$, then the distributions at times $t_1,t_2,\dots, t_k$ are 
given by 
$$(\bgam^{+}(e^{-t_1}), \dots, \bgam^{+}(e^{-t_k})).$$ 
In other words, the fdds of $(\bgam^+(e^{-t}))_{t\in `R}$
are those of the an additive coalescent, so that it is in fact an additive coalescent. 
We have the \emph{standard} additive coalescent since for a single fixed time $t$, $\bgam^+{e^{-t}}$ 
is the scaling limit of a percolated Cayley tree. The proof in the multiplicative case is similar, 
one only needs to use Note~\ref{note:homogeneous} to make the process time-homogeneous for fixed $n$, 
and identify the \emph{standard} multiplicative coalescent because $\bgam^\times(\lambda)$ is 
the scaling limit of the cluster sizes of $G(n,p_\lambda(n))$; the details are omitted.
\end{proof}

\subsection{Proof of Theorem~\ref{thm:augmented_conv}}

The representation we have of the process also yields a construction of 
a standard version of the augmented multiplicative coalescent of \citet{BhBuWa2013b}, 
which also keeps tracks of the surplus of the connected components (the minimum number of 
edges to remove in order to obtain a tree). We start with Corollary~\ref{cor:dist_incr}. 
Consider the random variables $U_i^\star(k)$, $i<k\le n$, and arrange them geometrically in a field 
$(P^n_i(j))_{1\le i \le n, 1\le j\le n-i}$ on the first quadrant $`N\times `N$ by setting
$P_i^n(j):=U^\star_{i+1}(i+1+j)$ for $1\le i\le n$ and $1\le j \le k-i$. 
Then, the number of extra edges in $G(n,t)$ of a connected component corresponding to an interval $I$ is
precisely the number of $P^n_i(j)$, $i\in I$, lying below the graph of $Z^t$ whose value is at most $t$.
(See Figure~\ref{fig:augmented}.)

\begin{proof}[Proof of Theorem~\ref{thm:augmented_conv}]
For every $t=1/n+\lambda n^{-4/3}$, the Bernoulli point set $\{(i n^{-2/3}, jn^{-1/3}): P^n_i(j)\le t\}$ 
converges to a Poisson point process with intensity one on $`R^+\times `R^+$. Furthermore, the limit 
point set is independent of $\lambda\in `R$. 

The convergence of the finite-dimensional distributions follows from arguments similar to the 
ones used in the proof of Theorem~\ref{thm:conv-coal} which rely on Skorokhod's representation theorem.
So it suffices to prove convergence of the marginals. For fixed $t$, note that the random
variables of the Bernoulli point set that are used to define $Z^t$ and the ones 
used to define the surplus are distinct. It follows that, for fixed $\lambda$, and on a space on which
$\Psi y^{(\lambda),n}_\times$ converges a.s., we have 
\[((\gamma_i^{\times,n}(\lambda))_{1\le i\le k}, (s_i^n(\lambda))_{1\le i\le k})
\to ((\gamma_i^{\times}(\lambda))_{1\le i\le k},(s_i(\lambda))_{1\le i\le k}),
\]
for every integer $k\ge 1$. It now remains to prove that $(\bgam_i^{\times,n}(\lambda),\bs^n(\lambda))_{n\ge 1}$ 
is tight in $\bbU_\da$. Since $\bgam^{\times,n}(\lambda)$ converges in $\ell^2$, it suffices to 
verify that for any $\epsilon>0$, there exists $K$ such that
\[\sup_{n\ge 1} \p{\sum_{i\ge 1} \gamma_i^{\times,n}(\lambda) \cdot s_i^n(\lambda) \ge K }\le \epsilon.\]
Let $I_i(\lambda)$ denote the interval corresponding to
the excursion of $\Psi \sy^{(\lambda),n}_\times$ whose length is recorded in $\gamma_i^{\times,n}(\lambda)$.
Let also $a_i^n$ denote the number of integral points lying (strictly) between the horizontal axis and 
the graph of $\Psi \sy^{(\lambda),n}_\times$ on the interval $I_i(\lambda)$. Then,
\[\Ec{\gamma_i^{\times,n}(\lambda)\cdot s_i^n(\lambda)} 
= p_\lambda(n) \cdot \Ec{\gamma_i^{\times,n}(\lambda) \cdot a_i^n(\lambda)}.\]
However $a_i^n$ is the area of the tree associated to the interval $I_i(\lambda)$ in the sense 
of \cite[Section~2]{AdBrGo2012a}. In particular, given $\gamma_i^{\times,n}(\lambda)n^{2/3}=m$, $a_i(\lambda)$ 
is distributed as $a(\tilde T^p_m)$ there. More precisely, 
\[
\pc{\tilde T^p_m \in \cB} 
= \frac{\Ec{\I{T_m\in \cB} (1-p_\lambda(n))^{-a(T_m)}}}{\Ec{(1-p_\lambda(n))^{-a(T_m)}}},
\]
where $T_m$ is a uniformly random tree on $[m]$, but we shall only use that 
$a(\tilde T^p_m) \le m h(\tilde T^p_m)$, where 
$h(T)$ denotes the height of the labelled tree $T$, and refer to some bounds on the height of 
$\tilde T_m^p$ proved in \cite{AdBrGo2012a}. 
By Lemma~25 there and Jensen's inequality, there is a constant $C$ such that, 
for all $1\le m\le n$, we have
\begin{equation}\label{eq:unif_bound_area}
\Ec{a(\tilde T_m^p)}\le m \Ec{h(\tilde T_m^p)} \le C \max\{m^{3/2}/n, 1\} \cdot m^{3/2}.
\end{equation}
Note that here, $m$ is $n^{2/3}\gamma_i^{\times,n}$ for some $i\ge 1$, and 
$m^{3/2}/n\le (\gamma_1^{\times,n})^{3/2}\le \|\bgam^{\times,n}\|^{3/2}_2$. 
Since for $n$ large enough we have $p_\lambda(n)\le 2/n$, it follows that
\begin{align*}
`E\Bigg[\sum_{i\ge 1} \gamma_i^{\times,n}(\lambda)\cdot s_i^n(\lambda)
\,\Bigg| \,\|\bgam^{\times,n}(\lambda)\|_2\Bigg]
& = p_\lambda(n) `E\Bigg[\sum_{i\ge 1} \gamma_i^{\times,n}(\lambda)\cdot a_i^n(\lambda)
\,\Bigg| \,\|\bgam^{\times,n}(\lambda)\|_2\Bigg]\\
& \le 2C \E{\max\{\|\bgam^{\times,n}(\lambda)\|^{3/2}_2, 1\} \cdot \|\bgam^{\times,n}(\lambda)\|_{5/2}^{5/2}
\, \Big|\,\|\bgam^{\times,n}(\lambda)\|_2} \\
& \le 2C \max\{\|\bgam^{\times,n}(\lambda)\|_2^{4}, \|\bgam^{\times,n}(\lambda)\|_2^{5/2}\}.
\end{align*}
Since $(\bgam^{\times,n}(\lambda))_{n\ge 1}$ is tight in $\ell^2$, it follows that 
$(\bgam^{\times,n}(\lambda), \bs^{n}(\lambda))_{n\ge 1}$ is tight in $\bbU_\da$. 

The tightness of the sequence of processes 
$(\bgam^{\times,n}(\lambda), \bs^n(\lambda))_{\lambda\in [\lambda_\star,\lambda^\star] }$, $n\ge 1$,
in $\bbD([\lambda_\star,\lambda^\star],\bbU_\da)$ follows from the almost Feller property of the augmented 
multiplicative coalescent \cite{BhBuWa2013b}, as in the proof of Theorem~\ref{thm:conv-coal}.
It suffices to consider the modified cumulative operator $\Sigma: \bbU_\da\to \ell^2_{\ge 0}$ 
that associates to $(\bx,\bs)\in \bbU_\da$ the sequence
\[\Sigma(\bx,\bs):=\l(\sum_{i=1}^k x_i^2 + \sum_{i=1}^k x_i s_i, k\ge 1\r).\]
Then relative compactness in $\bbD([\lambda_\star,\lambda^\star], \bbU_\da)$ reduces to the 
three conditions in Lemma~\ref{lem:compact_coal} with $S^p$ replaced by $\Sigma$. The proof 
uses no idea that is not already present in the proof of Theorem~\ref{thm:conv-coal} that 
we detailed earlier, so we omit the details.
\end{proof}

\begin{figure}[htb]
    \centering
    \includegraphics[scale=.7]{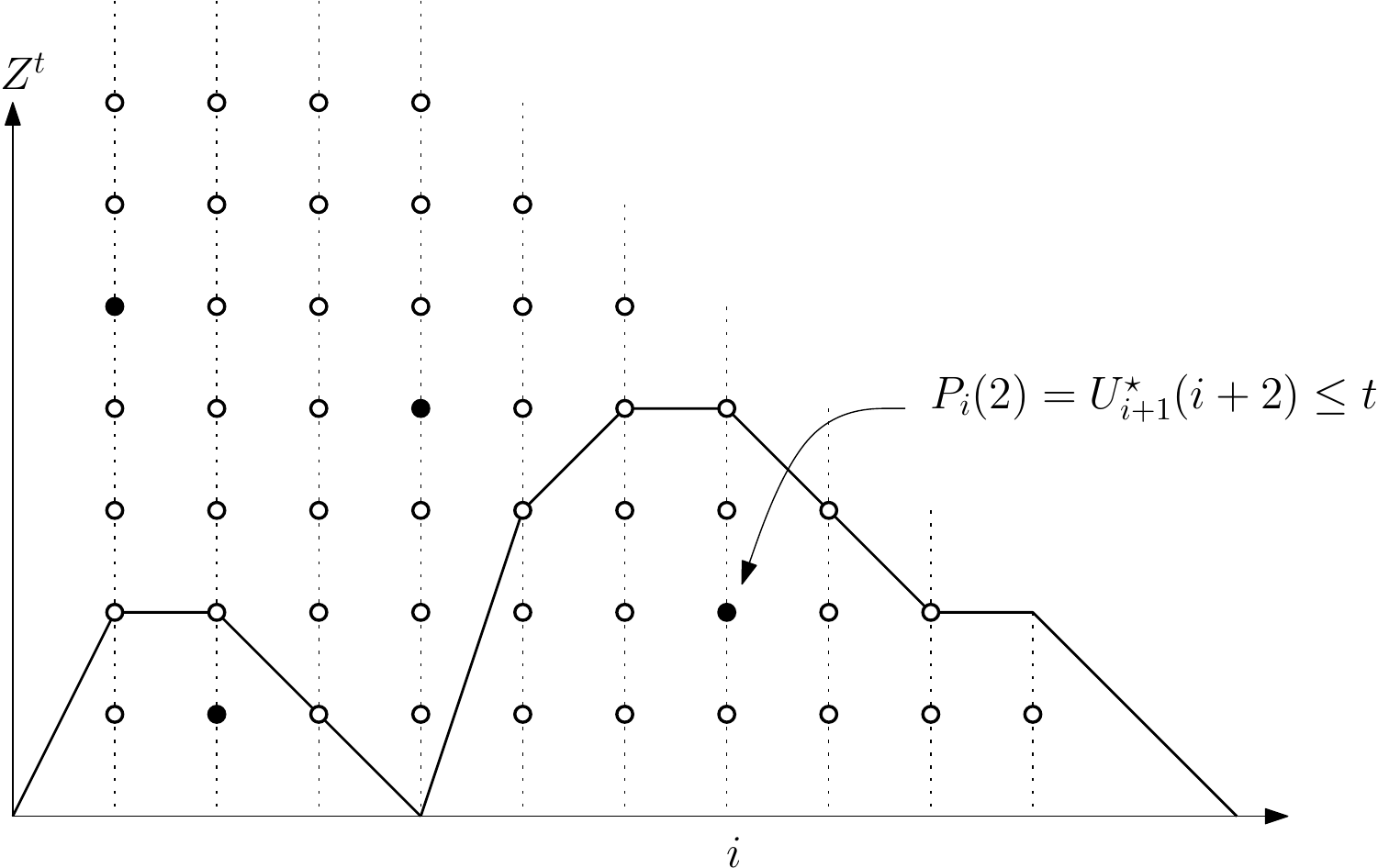}
    \caption{\label{fig:augmented} $Z^t$ and the field $(P^n_i(j))$ is represented by the bullets;
    the black ones are the ones whose value is at most $t$. So here, the graph represented has 
    two connected components, each having one extra edge.}
\end{figure}

{\small
\setlength{\bibsep}{0.25em}
\bibliographystyle{plainnat}
\bibliography{bib}
}
% \newpage
% \tableofcontents
% \newpage

\end{document}